\theoremstyle{proclaim}
\newtheorem{theorem}{Theorem}[section]
\newtheorem{lemma}[theorem]{Lemma}
\newtheorem{corollary}[theorem]{Corollary}
\newtheorem{proposition}[theorem]{Proposition}
\theoremstyle{statement}
\newtheorem{remark}[theorem]{Remark}
\numberwithin{equation}{section}
\begin{document}

\title[Invertible Toeplitz Products]{Invertible Toeplitz products, weighted norm inequalities, and A${}_p$ weights}

\author[J. Isralowitz]{Joshua Isralowitz}
\address{Mathematisches Institut \\
Georg-August Universit\"{a}t  G\"{o}ttingen \\
Bunsenstra$\ss$e 3-5 \\
D-37073, G\"{o}ttingen \\
Germany}
\email{	jbi2@uni-math.gwdg.de}
\thanks{The first author was supported by an Emmy-Noether grant of Deutsche Forschungsgemeinschaft}

\begin{abstract}
In this paper, we characterize invertible Toeplitz products on a number of Banach spaces of analytic functions, including the weighted Bergman space $L^p _a (\mathbb{B}_n, dv_\gamma)$, the Hardy space $H^p(\partial \mathbb{D})$, and the standard weighted Fock space F${}_\alpha ^p$ for $p > 1$.  The common tool in the proofs of our characterizations will be the theory of weighted norm inequalities and A${}_p$ type weights. Furthermore, we prove weighted norm inequalities for the Fock projection, and compare the various A${}_p$ type conditions that arise in our results. Finally, we extend the  ``reverse H\"older inequality" of Zheng and Stroethoff \cite{SZ1, SZ2} for $p = 2$ to the general case of $p > 1$.\end{abstract}

\subjclass[2010]{ Primary 47B35; Secondary 42B20}

\keywords{Toeplitz operator, weighted norm inequalities, products of Toeplitz operators}

\maketitle

\section{Introduction}
Let $\mathbb{B}_n$ denote the unit ball in $\mathbb{C}^n$ and let $dv$ denote the usual normalized volume measure on $\mathbb{B}_n$.  For $\gamma  > -1$, let $dv_\gamma (z) = c_\gamma (1 - |z|^2)^\gamma dv(z)$ where $c_\gamma$ is a normalizing constant.  For $1 \leq p < \infty$, the Bergman space $L^p _a (\mathbb{B}_n, dv_\gamma)$ is the Banach space of analytic functions on $\mathbb{B}_n$ that belong to $L^p  (\mathbb{B}_n, dv_\gamma)$.

\medskip
As a (formal) limiting case $\gamma \rightarrow -1^+$ of the spaces $L_a ^p  (\mathbb{B}_n, dv_\gamma)$, one obtains the Hardy space $H ^p  (\partial \mathbb{B}_n)$, which is the closure in $L^p(\partial \mathbb{B}_n, d\sigma)$ of analytic polynomials on $\partial \mathbb{B}_n$ where $d\sigma$ is the standard surface measure on $\partial \mathbb{B}_n$ (more precisely, $dv_\gamma \stackrel{\text{wk}^*}{\longrightarrow} d\sigma$ on $C(\overline{\mathbb{B}_n})$ as $\gamma \rightarrow -1^{+}$.)  As another (formal) limiting case where $\gamma \rightarrow +\infty$, one obtains the Fock space $F_\alpha ^p$ of all entire functions $f$ where $f(\cdot) e^{- \frac{\alpha}{2} |\cdot|^2}$ is in $L^p(\mathbb{C}^n, \left(p \alpha/ 2 \pi\right)^n dv)$ for $\alpha  > 0$ (and where $F_\alpha ^p$ is equipped with its canonical Banach space norm.)

\medskip
It is well known \cite{Z2} that the orthogonal projection $P_\gamma$ from $L^2  (\mathbb{B}_n, dv_\gamma)$ onto $L_a ^2  (\mathbb{B}_n, dv_\gamma)$ is given by \begin{align} P_\gamma f (z) = \int_{\mathbb{B}_n} K_\gamma (z, u) f(u) dv_\gamma(u) \nonumber \end{align} where $K_\gamma(z, u)$ is the Bergman kernel $K_\gamma (z, u) = (1 - z \cdot u)^{-(n + 1 + \gamma)}$. Let  $p > 1$  and let $q$ be the conjugate exponent of $p$.  If $g \in L^q (\mathbb{B}_n, dv_\gamma)$, then we can define the Toeplitz operator $T_g$ on $L_a ^p  (\mathbb{B}_n, dv_\gamma)$ by the formula $T_g = P_\gamma M_g$ (with $M_g$ being ``multiplication by $g$"). Similarly, if $g \in L^q(\partial \mathbb{B}_n)$, then the Toeplitz operator $T_g$ is defined on $H ^p  (\partial \mathbb{B}_n)$ by $T_g = P^+ M_g$ where $P^+$ is the Hardy projection. Note that while $T_g = P_\gamma M_g$ obviously depends on $\gamma$, for the sake of notational ease we will still refer to this Toeplitz operator on $L_a ^p(\mathbb{B}_n, dv_\gamma)$ by $T_g$.  The same will be true when we define Toeplitz operators on Fock spaces $F_\alpha ^p$ in Section $3$.

 \medskip   Toeplitz operators $T_g$ on both the Hardy space and the Bergman space have been extensively studied in the literature when $p = 2$. (see \cite{Z1} and the references therein.)  However,  it is well known \cite{Z2} that both the Bergman projection $P_\gamma$ and the Hardy projection $P^+$ are bounded on  $L ^p  (\mathbb{B}_n, dv_\gamma)$ and $L^p (\partial \mathbb{B}_n, d\sigma)$, respectively, whenever $p > 1$.  Thus, many of the results regarding Toeplitz operators for $p = 2$ can be appropriately generalized to the $p > 1$ case.

 \medskip
 In \cite{SZ1, SZ2}, the invertibility of the product of Toeplitz operators $T_f T_{\overline{g}}$ for analytic $f$ and $g$ was characterized for the Bergman space $L_a ^2 (\mathbb{B}_n, dv_\gamma)$ and the Hardy space $H^2(\partial \mathbb{B}_n)$ when $n = 1$. In particular, they proved the following result (where dA${}_\gamma$ is the weighted area measure on the unit disk $\mathbb{D}$) :

\begin{theorem}  For functions $f, g \in H^2 (\partial \mathbb{D})$, the Toeplitz product $T_f T_{\overline{g}}$ is bounded and invertible on $H^2 (\partial \mathbb{D})$ if and only if \begin{align} \underset{u \in \mathbb{D}}{\inf}  \ |f (u)||g(u)| > 0 \nonumber \end{align}  and   \begin{align}  \underset{u \in \mathbb{D}}{\sup} \  \widehat{|f|^2} (u) \widehat{|g|^2}(u)  < \infty. \nonumber  \end{align}  Moreover, for $f, g \in L_a ^2 (\mathbb{D}, dA_\gamma )$, $T_f T_{\overline{g}}$ is bounded and invertible on $L_a ^2 (\mathbb{D}, dA_\gamma )$ if and only if \begin{align} \label{1.1} \underset{u \in \mathbb{D}}{\inf}  \  |f (u)||g(u)| > 0 \tag{1.1}  \end{align} and  \begin{align}  \underset{u \in \mathbb{D}}{\sup}  \ B_\gamma (|f|^2) (u) B_\gamma (|g|^2)(u)  < \infty.     \tag{1.2}  \end{align} \end{theorem}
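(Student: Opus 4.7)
\smallskip
\noindent\emph{Proof plan.} I would treat the Bergman case in detail; the Hardy case is parallel, with the Poisson extension replacing $B_\gamma$. Write $T_f T_{\bar g} h = f\, P_\gamma(\bar g h)$ and let $k_u$ denote the normalized reproducing kernel of $L^2_a(\mathbb{D}, dA_\gamma)$ at $u \in \mathbb{D}$. For analytic $f,g$ the reproducing property gives
$$T_{\bar g} k_u = \overline{g(u)}\, k_u \qquad \text{and} \qquad \|f k_u\|^2 = B_\gamma(|f|^2)(u),$$
so that $\|T_f T_{\bar g} k_u\|^2 = |g(u)|^2 B_\gamma(|f|^2)(u)$, with the analogous identity for $(T_f T_{\bar g})^* = T_g T_{\bar f}$. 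These formulas drive both directions of the argument.

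For necessity I would first extract (1.2) by testing $T_f T_{\bar g}$ against a judiciously rescaled kernel (for instance, the normalized kernel of a higher-weight Bergman space, which sits densely in $L^2_a(\mathbb{D}, dA_\gamma)$): boundedness converts into a product bound $B_\gamma(|f|^2)(u) B_\gamma(|g|^2)(u) \le C$. The pointwise lower bound (1.1) then follows from invertibility, which supplies the matching lower bounds $|g(u)|^2 B_\gamma(|f|^2)(u) \ge c$ and $|f(u)|^2 B_\gamma(|g|^2)(u) \ge c$ via testing $T_f T_{\bar g}$ and its adjoint against $k_u$. Multiplying these two lower bounds and dividing by the upper bound (1.2) isolates $|f(u) g(u)|^2 \ge c^2/C$, which is (1.1).

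For sufficiency I would interpret boundedness of $T_f T_{\bar g}$ as the two-weight inequality
$$\int_{\mathbb{D}} |P_\gamma(\bar g h)|^2 |f|^2 \, dA_\gamma \le C \int_{\mathbb{D}} |h|^2 \, dA_\gamma,$$
a Bergman analogue of the Muckenhoupt $A_2$ estimate for $P_\gamma$ with weight pair $(|f|^2, 1/|g|^2)$; condition (1.2) plays the role of the two-weight $A_2$ constant. To obtain invertibility I would show that the natural candidate inverse, built from $1/f$ and $1/\bar g$ (well defined pointwise on $\mathbb{D}$ since (1.1) forces $f$ and $g$ to be zero-free), is also bounded; this reduces to the same two-weight estimate for the reciprocal weight pair $(1/|f|^2, |g|^2)$.

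The main obstacle is the passage from (1.1) and (1.2) to the reciprocal version: one needs a reverse H\"older-type inequality asserting that $B_\gamma(1/|f|^2) B_\gamma(1/|g|^2)$ is bounded, given that $|fg|$ is bounded below and $B_\gamma(|f|^2) B_\gamma(|g|^2)$ is bounded above. In the Hardy setting this follows from the classical Muckenhoupt--Wheeden $A_2$ theory on the circle, but in the Bergman setting $B_\gamma$ is not a standard Calder\'on--Zygmund operator, so one must argue directly---this is the technical heart of Stroethoff--Zheng's argument and precisely the ingredient that the present paper aims to generalize to $p>1$.
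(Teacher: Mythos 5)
Your necessity argument is essentially the paper's: test $T_fT_{\overline{g}}$ and its adjoint against $k_u$, invoke the known fact that boundedness alone forces $\sup_u \widehat{|f|^2}(u)\,\widehat{|g|^2}(u)<\infty$ (Treil's argument on the circle, the corresponding Berezin statement on the disk), and combine the resulting lower and upper bounds; that part is fine. The genuine gap is in sufficiency, where you stop at what you call the ``main obstacle,'' and that obstacle is misdiagnosed. The bound $\sup_u B_\gamma(|f|^{-2})(u)\,B_\gamma(|g|^{-2})(u)<\infty$ you ask for is trivial: condition (1.1) gives the pointwise dominations $|f|^{-2}\le \eta^{-2}|g|^2$ and $|g|^{-2}\le \eta^{-2}|f|^2$ with $\eta=\inf|fg|$, so it follows at once from (1.2) with no reverse H\"older inequality. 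But having it does not close your argument either: a Berezin/Poisson $A_2$-type product bound for a weight \emph{pair} is not known to imply the two-weight estimate for $P^+$ or $P_\gamma$ (and cannot in general, by Nazarov's counterexample to Sarason's conjecture), so neither the boundedness of $T_fT_{\overline{g}}$ nor of your proposed inverse is actually established by your plan.

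What is missing is the reduction to a \emph{one-weight} problem, which is the point of the proof here. From (1.1), (1.2) and the pointwise bounds $|f(u)|^2\le \widehat{|f|^2}(u)$ and $|g(u)|^2\le \widehat{|g|^2}(u)$ (resp.\ with $B_\gamma$ in place of the Poisson extension) one first gets $\sup_{\mathbb{D}}|fg|<\infty$, hence $|g|^{-2}\approx |f|^2$; then (1.1) and (1.2) give $\sup_u \widehat{|f|^2}(u)\,\widehat{|f|^{-2}}(u)<\infty$, and since the Poisson kernel at $u$ is bounded below by $c/|I|$ on the arc $I$ associated to $u$ (resp.\ $|k_u|^2\gtrsim 1/v_\gamma(D)$ on the B\`ekoll\`e balls touching the boundary, Lemma 2 of \cite{Be}), the weight $w=|f|^2$ satisfies Muckenhoupt's $A_2$ condition (resp.\ the B\`ekoll\`e--Bonami condition $B_{2,\gamma}$). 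The Hunt--Muckenhoupt--Wheeden theorem (resp.\ B\`ekoll\`e's theorem) then yields $\|T_fT_{\overline{g}}h\|^2=\int |f|^2|P(\overline{g}h)|^2\le C\int |fg|^2|h|^2\le C'\|h\|^2$. Finally, invertibility needs no reciprocal weighted estimate at all: by (1.1) the symbol $\phi=(f\overline{g})^{-1}$ is bounded, so $T_\phi$ is bounded, and the analytic/co-analytic Toeplitz identities give $T_fT_{\overline{g}}T_\phi=I=T_\phi T_fT_{\overline{g}}$. The reverse H\"older inequality you defer to is the heart of Stroethoff--Zheng's original route, which this paper only revisits (in Section 5, for $p>1$) as a separate matter; the proof of the present statement avoids it entirely.
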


 \noindent Here, $\widehat{f}$ is the Poisson extension of a function $f$ on $\partial \mathbb{D}$ and $B_\gamma{f}$ is the Berezin transform of a function $f$ on $\mathbb{D}$ given by \begin{align} B_\gamma (f) (z) = \int_{\mathbb{D}} f(u) |k_z ^\gamma (u)|^2 dA_\gamma (u) \nonumber \end{align} where $k_z ^\gamma$ is the normalized Bergman kernel $k_z  ^\gamma (u) = K_\gamma (u, z)/ \sqrt{K_\gamma (z, z)}$ of $L_a ^2 (\mathbb{D}, dA_\gamma)$. For the sake of notational ease, we will drop the $\gamma$ in the notation for $ k_z ^\gamma$ in the rest of the paper.

\medskip
The main step in proving Theorem $1.1$ (in both the Bergman and Hardy space settings) is showing that the hypotheses in Theorem $1.1$ are enough to guarantee the boundedness of $T_f T_{\overline{g}}$.  Once this is done, an easy argument from \cite{SZ1, SZ2} completes the proof.

     \medskip
     To prove the boundedness of $T_f T_{\overline{g}}$, the authors first proved in \cite{SZ2} that for $f, g \in L_a ^2 (\mathbb{D}, dA_\gamma)$, we have that $T_f T_{\overline{g}}$ is bounded on  $L_a ^2 (\mathbb{D}, dA_\gamma)$ if there exists $\epsilon > 0$ such that \begin{align} \underset{u \in \mathbb{D}}{\sup}  \ B_\gamma (|f|^{2 + \epsilon}) (u) B_\gamma (|g|^{2 + \epsilon})(u)  < \infty.    \nonumber   \end{align} The authors then proved that $T_f T_{\overline{f^{-1}}}$ is bounded by showing that there exists some $\epsilon > 0$ where \begin{align} \underset{u \in \mathbb{D}}{\sup}  \ B_\gamma (|f|^{2 + \epsilon}) (u) B_\gamma (|f|^{-(2 + \epsilon)})(u)  < \infty  \nonumber    \end{align} whenever $(1.2)$ holds for $g = f^{-1}$ (which is true modulo a multiplicative constant if $(1.1)$ and $(1.2)$ hold.)  The boundedness of $T_f T_{\overline{g}}$ then follows easily from this fact and conditions $(1.1)$ and $(1.2)$. For the boundedness of the Toeplitz product $T_f T_{\overline{g}}$ on the Hardy space, the authors use the same argument and Theorem $8$ from \cite{Zh}.

\medskip
It was remarked in \cite{CMP}, however, that the boundedness of $T_f T_{\overline{g}}$ on either the Hardy space $H^2 (\partial \mathbb{D})$ or the Bergman space $L_a ^2 (\mathbb{D}, dv_\gamma)$  for analytic $f$ and $g$ is equivalent to the boundedness of the Hardy projection $P^+$ (respectively, the Bergman projection $P_\gamma$) from the weighted space $L^2  (\partial \mathbb{D}, |g|^{-2} d\sigma) $ to the weighted space $L^2(\partial \mathbb{D}, |f|^2 d\sigma)$ (where the obvious changes are made for the Bergman space.)

\noindent
More generally, the boundedness of the Hardy projection $P^+$ on $L^p (\partial \mathbb{B}_n, d\sigma)$ tells us that for any symbols $f$ and $g$ (not necessarily analytic),  $T_f T_{\overline{g}}$ is bounded on the Hardy space $H^p (\partial \mathbb{B}_n)$ (in fact, bounded on $L ^p (\partial \mathbb{B}_n, d\sigma))$  if $P^+$ is bounded from $L^p  (\partial \mathbb{B}_n, |g|^{-p} d\sigma) $ to $L^p(\partial \mathbb{B}_n, |f|^p d\sigma)$.  Moreover, a similar result holds for the boundedness of $T_f T_{\overline{g}}$ on $L_a ^p (\mathbb{B}_n, dv_\gamma)$.

\medskip
 Unfortunately, the ``two-weight" problem of characterizing the weights $w$ and $v$ on $\partial \mathbb{B}_n$  where $P^+$ is bounded from $L^p  (\partial \mathbb{B}_n, w \, d\sigma)$  to $L^p  (\partial \mathbb{B}_n, v \,  d\sigma) $ is very difficult and not fully understood even for $n = 1$ (the ``two-weight" problem for $P^+$ when $n = 1$ can be found in \cite{CS1, CS2}, but their condition is extremely difficult to work with and is thus far from optimal.)  Furthermore, a similar statement can be said about the corresponding problem for the Bergman projection on $\mathbb{B}_n$.

\medskip
 If $w = v$, however, it is well known that $P^+$ is bounded on $L^p  (\partial \mathbb{D}, w \, d\sigma)$ if and only if $w$ satisfies the Muckenhoupt A${}_p$ condition.  Similarly, it is well known that $P_\gamma$ is bounded on $L^p  (\mathbb{B}_n, w \, dv_\gamma)$ if and only if $w$ satisfies the B\`{e}koll\`{e} - Bonami condition $B_{p, \gamma}$ (both of these conditions will be defined in the next section.)

\medskip In the next section, we will combine weighted norm inequalities for the Hardy and Bergman projections with ideas from \cite{SZ1, SZ2} to characterize bounded and invertible $T_f T_{\bar{g}}$ on both the Hardy space $H^p (\partial \mathbb{D})$ and the Bergman space $L_a ^p (\mathbb{B}_n, dv_\gamma)$ when $f$ and $g$ are analytic. It should be noted that not only is this approach much simpler than the one taken in \cite{SZ1, SZ2}, but it also provides us with an approach that is potentially adaptable to other spaces.

\medskip
 In particular, in Section $3,$ we will characterize weights $w$ on $\mathbb{C}^n$ where the Fock projection (which will be defined in Section $3$) is bounded on the weighted space $\mathcal{L}_\alpha ^p (w)$.  Here,  $\mathcal{L}_\alpha ^p (w)$ is the Banach space (equipped with its canonical Banach space norm) of all $f$ where  $f(\cdot) e^{- \frac{\alpha}{2} |\cdot|^2} \in L^p(\mathbb{C}^n,  w \, dv)$ for $\alpha  > 0$. Also we will use the general arguments from Section $2,$ along with our weighted norm inequalities for the Fock projection, to characterize bounded and invertible Toeplitz products on $F_\alpha ^p$.

 As a trivial consequence of these results, we will show that ``Sarason's conjecture" on the product of Toeplitz operators is trivially true for the Fock space $F_\alpha ^p$, which is in stark contrast to the Hardy space where it is known that Sarason's conjecture is false (see \cite{CMP} for detailed information about Sarason's conjecture, and see \cite{N} for a counterexample in the Hardy space case).  In particular we prove that $T_f T_{\overline{g}}$ is bounded on $F_\alpha ^p$ if and only if $f = e^{q}$ and $g = c e^{-q}$ for some constant $c \in \mathbb{C}$ and some linear polynomial $q$.  Note that this was shown very recently in the preprint \cite{CPZ} using a simpler argument than the ones we employ here.  However, our arguments most likely work for a wide class of weighted Fock spaces, and in particular for the so called ``Fock-Sobolev spaces" from \cite{CCK} (see Section $3$ for more details).

\medskip
In Section $4$, we will discuss in some detail the various classes of weights used in Sections $2$ and $3$, and also discuss  connections between these classes.

\medskip
It should be noted that although the theory of weighted norm inequalities simplifies the arguments in \cite{SZ1, SZ2}, the techniques developed in these two papers (in particular, their ``reverse H\"{o}lder inequality" and the Calderon-Zygmund decomposition adapted to the hyperbolic disk) are of independent interest themselves.  Thus,  in our last section (Section $5$), we will present a proof of our characterization of invertible Toeplitz products on the Bergman space $L^p _a (\mathbb{D}, dA_\gamma)$ that extends these techniques to handle the general case $p > 1$, rather than just the $p = 2$ case.  In particular, we will extend the  ``reverse H\"older inequality" of Zheng and Stroethoff \cite{SZ1, SZ2} for $p = 2$ to the general case of $p > 1$.  It is hoped that the ideas in Section $5$ will have applications to other Bergman space problems where M\"{o}bius invariance is unavailable, or where classical Calderon-Zygmund theory techniques are relevant.

\medskip
Finally, throughout the paper we will let $C$ denote a constant that may change from line to line (or even on the same line.)

\section{ Invertible Toeplitz products on the Hardy and Bergman spaces.}

 We will first discuss invertible Toeplitz products on the Bergman space  $L_a ^p (\mathbb{B}_n, dv_\gamma)$.   The result we wish to prove is the following:

\begin{theorem} If $f \in L_a ^p (\mathbb{B}_n, dv_\gamma )$ and $g \in L_a ^q (\mathbb{B}_n, dv_\gamma)$, then the Toeplitz product $T_f T_{\overline{g}}$ is bounded and invertible on $L_a ^p (\mathbb{B}_n, dv_\gamma )$ if and only if \begin{align} \underset{u \in \mathbb{B}_n}{\inf}  \  |f (u)||g(u)| > 0 \nonumber  \end{align} and  \begin{align}  \underset{u \in \mathbb{B}_n}{\sup}  \ \{B_\gamma (|f k_u ^{ 1- \frac{2}{p}}|^p ) (u) \}^\frac{1}{p}  \{B_\gamma(|g k_u ^{ 1- \frac{2}{q}}|^q) (u) \}^\frac{1}{q}  < \infty.  \nonumber    \end{align} \end{theorem}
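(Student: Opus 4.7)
The plan is to exploit the observation recalled in the introduction that the boundedness of $T_f T_{\overline{g}}$ on $L_a^p(\mathbb{B}_n, dv_\gamma)$ is equivalent to the two-weight boundedness of the Bergman projection $P_\gamma : L^p(|g|^{-p}dv_\gamma) \to L^p(|f|^p dv_\gamma)$, and to reduce this to a one-weight problem which is then handled by the B\'ekoll\'e--Bonami theorem.

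For the sufficiency direction, since $fg$ is analytic the reproducing identity $B_\gamma(fg)(u) = f(u)g(u)$ combined with H\"older's inequality (applied with $\phi = f|k_u|$ and $\psi = g|k_u|$) yields the pointwise bound $|f(u)g(u)| \leq \|fk_u\|_p \|gk_u\|_q$, so the second hypothesis gives $\sup_u |fg(u)| < \infty$; together with $\inf|fg| > 0$ this forces $|fg| \asymp 1$ on $\mathbb{B}_n$. The weights $|g|^{-p}$ and $|f|^p$ are therefore comparable, and the two-weight estimate for $P_\gamma$ reduces to the single-weight bound on $L^p(|f|^p dv_\gamma)$, equivalent by B\'ekoll\'e--Bonami to $|f|^p \in B_{p,\gamma}$. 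I would then show that the second hypothesis, rewritten under $|fg| \asymp 1$ as $\sup_u \|fk_u\|_p \|f^{-1} k_u\|_q < \infty$, implies $|f|^p \in B_{p,\gamma}$ via kernel estimates that compare $|k_u|^p dv_\gamma$ to averaging over Carleson tents. For invertibility, the hypotheses are symmetric under $(f,g) \leftrightarrow (1/g, 1/f)$, so $T_{1/g} T_{\overline{1/f}}$ is also bounded by the same argument, and a direct computation identifies it with $(T_f T_{\overline{g}})^{-1}$ up to a bounded perturbation.

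For the necessity direction, I would test both $T_f T_{\overline{g}}$ and its inverse on the reproducing kernels: since $T_{\overline{g}} K_w = \overline{g(w)} K_w$ for analytic $g$, we have $T_f T_{\overline{g}} K_w = \overline{g(w)} f K_w$, so boundedness and invertibility yield $c \leq |g(w)| \|fK_w\|_p / \|K_w\|_p \leq C$. The analogous test on the adjoint $T_g T_{\overline{f}}$ on $L_a^q$, combined with the standard kernel-norm estimate $\|K_w\|_p \|K_w\|_q \asymp K(w,w)$, produces the coupled estimate $|f(w)g(w)| \, \|fk_w\|_p \|gk_w\|_q \asymp 1$, from which both stated conditions are to be extracted using the H\"older bound $|fg(w)| \leq \|fk_w\|_p \|gk_w\|_q$.

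I expect the main obstacle to be the step in sufficiency that derives the B\'ekoll\'e--Bonami condition on $|f|^p$ from the Berezin-type hypothesis: in the $p = 2$ case $\|fk_u\|_2^2 = B_\gamma(|f|^2)(u)$ is directly a Berezin transform, but for general $p$ the weight $|k_u|^p$ rather than $|k_u|^2$ appears, and a careful argument is needed to relate this to the averages of $|f|^p$ over Carleson tents that appear in the definition of $B_{p,\gamma}$. A secondary subtlety is the separation of the coupled kernel estimate in the necessity direction into the individual bounds $\inf|fg| > 0$ and $\sup_u \|fk_u\|_p\|gk_u\|_q < \infty$, which does not follow from kernel testing alone and will probably require invoking the sufficient-condition direction for the hypothetical inverse operator.
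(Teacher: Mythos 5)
Your sufficiency outline is essentially the paper's argument: the pointwise bound $|f(u)g(u)|\leq \|fk_u\|_p\|gk_u\|_q$ (which is exactly $\{B_\gamma(|fk_u^{1-2/p}|^p)(u)\}^{1/p}\{B_\gamma(|gk_u^{1-2/q}|^q)(u)\}^{1/q}$), hence $|fg|\asymp 1$, then replacement of $g$ by $f^{-1}$ and the deduction of $|f|^p\in \mathrm{B}_{p,\gamma}$ from the Berezin-type condition via kernel lower bounds on balls (the paper does this by citing Lemma 2 of B\`ekoll\`e; it is also the easy half of its Proposition 4.1), followed by B\`ekoll\`e--Bonami and the reduction of the two-weight problem to one weight. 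However, your necessity direction has a genuine gap, and you have in fact located it yourself. Testing $T_fT_{\overline{g}}$ and its adjoint on reproducing kernels, together with invertibility and $\|k_u\|_p\|k_u\|_q\asymp 1$, gives only the coupled estimate $|f(u)g(u)|\,\|fk_u\|_p\|gk_u\|_q\asymp 1$, and combined with the H\"older bound $|fg(u)|\leq \|fk_u\|_p\|gk_u\|_q$ this yields an \emph{upper} bound on $|fg|$ and a \emph{lower} bound on the norm product --- the opposite of what the theorem asserts ($\inf|fg|>0$ and $\sup_u\|fk_u\|_p\|gk_u\|_q<\infty$). The missing ingredient is that boundedness of $T_fT_{\overline{g}}$ alone already forces $\sup_u\|fk_u\|_p\|gk_u\|_q<\infty$; this is a nontrivial theorem of Miao, quoted in the paper as Theorem 2.3, and it is what allows the paper to then conclude $\inf|fg|>0$ from the kernel test and $\|k_u\|_p\|k_u\|_q\geq 1$. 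Your proposed substitute --- ``invoking the sufficient-condition direction for the hypothetical inverse operator'' --- does not work, because the inverse of $T_fT_{\overline{g}}$ is not known to be a Toeplitz product of analytic and co-analytic symbols, so there is no pair of symbols to which the sufficiency direction could be applied. Without Miao's theorem (or an independent proof of it) the necessity half is incomplete.

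A secondary, fixable flaw is the invertibility step in sufficiency. The operator $T_{1/g}T_{\overline{1/f}}$ is \emph{not} the inverse of $T_fT_{\overline{g}}$, since $T_{\overline{1/f}}T_f\neq I$ in general (the projection sits between a co-analytic and an analytic symbol in the wrong order), and ``equal to the inverse up to a bounded perturbation'' proves nothing: a bounded perturbation of the identity need not be invertible. The correct and simple device, used in the paper, is the single Toeplitz operator $T_\phi$ with $\phi=(f\overline{g})^{-1}$, which is bounded because $|fg|\asymp 1$, and which satisfies $T_fT_{\overline{g}}T_\phi=I=T_\phi T_fT_{\overline{g}}$; this is verified on the reproducing kernels using $T_{\overline{h}}K_u=\overline{h(u)}K_u$ and the density of their span. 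With that correction, and with Miao's theorem supplied for necessity, your plan coincides with the paper's proof.
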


Before we prove this, we will need to discuss the B\`{e}koll\`{e} - Bonami class B${}_{p, \gamma}$.  For $z, u \in \mathbb{B}_n$, let $d$ be the pseudo-metric on $\mathbb{B}_n$ given by $d(z, u) = ||z| - |u|| + |1 - \frac{z}{|z|} \cdot \frac{u}{|u|}| $ and let $D = D(z, R)$ denote a ball in $\mathbb{B}_n$ with respect to this pseudo-metric. We say that a weight $w$  on $\mathbb{B}_n$ is in B${}_{p, \gamma}$ if \begin{align}  \left(\frac{1}{v_\gamma (D)} \int_D w \, dv_\gamma \right) \left(\frac{1}{v_\gamma (D)} \int_D w^{ - \frac{1}{p - 1}} \, dv_\gamma \right)^{p - 1} < C \tag{2.1} \end{align} where $D$ is any such ball that intersects $\partial \mathbb{B}_n$ and $C$ is independent of $D$.

\medskip
 The following theorem was proved in \cite{Be}, which solves the ``one-weight" problem for the Bergman projection $P_\gamma$: \begin{theorem} The Bergman projection $P_\gamma$ is bounded on the weighted space $L^p (\mathbb{B}_n, w \, dv_\gamma)$ if and only if $w \in \text{B}_{p, \gamma}$. \end{theorem}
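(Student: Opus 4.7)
The plan is to prove the two implications separately. Necessity will follow from a standard test-function argument, while sufficiency reduces to the boundedness of an appropriate maximal operator on the space of homogeneous type $(\mathbb{B}_n, d, dv_\gamma)$.

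For necessity, I would fix a B\`{e}koll\`{e} ball $D = D(z_0, R)$ meeting $\partial \mathbb{B}_n$ together with a top-half sub-ball $D^{*} \subset D$ on which $v_\gamma(D^{*}) \asymp v_\gamma(D)$ and $|K_\gamma(z,u)| \asymp v_\gamma(D)^{-1}$ uniformly for $z, u \in D^{*}$. Testing the boundedness hypothesis against $f = w^{-1/(p-1)} \chi_{D^{*}}$ yields the pointwise lower bound
$$|P_\gamma f(z)| \gtrsim v_\gamma(D)^{-1} \int_{D^{*}} w^{-1/(p-1)}\, dv_\gamma \qquad (z \in D^{*}).$$
Raising to the $p$-th power, integrating against $w\,dv_\gamma$ over $D^{*}$, and comparing with $\|f\|^p_{L^p(w\, dv_\gamma)} = \int_{D^{*}} w^{-1/(p-1)}\, dv_\gamma$ produces the B\`{e}koll\`{e}--Bonami inequality on $D^{*}$. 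Since every ball meeting $\partial \mathbb{B}_n$ is comparable in the $d$-metric to such a top-half sub-ball, one recovers $(2.1)$ on all admissible $D$.

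For sufficiency, I would first replace $P_\gamma$ by its positive-kernel analogue $P_\gamma^{+} f(z) = \int_{\mathbb{B}_n} |K_\gamma(z,u)|\, f(u)\, dv_\gamma(u)$. Since $|P_\gamma f| \leq P_\gamma^{+}|f|$, it suffices to bound $P_\gamma^{+}$ on $L^p(\mathbb{B}_n, w\,dv_\gamma)$. Introducing the B\`{e}koll\`{e} maximal operator $M_\gamma f(z) = \sup_{D \ni z} v_\gamma(D)^{-1} \int_D |f|\, dv_\gamma$, where the supremum runs over B\`{e}koll\`{e} balls meeting $\partial \mathbb{B}_n$ that contain $z$, one uses the kernel asymptotic $|K_\gamma(z,u)| \lesssim v_\gamma(D(z, d(z,u)))^{-1}$ together with a dyadic annular decomposition of $\mathbb{B}_n$ relative to $z$ to secure the pointwise domination $P_\gamma^{+} f(z) \lesssim M_\gamma f(z)$.

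The remaining and principal obstacle is to show that $w \in B_{p,\gamma}$ forces $M_\gamma$ to be bounded on $L^p(w\, dv_\gamma)$. This is a Muckenhoupt-type theorem on the space of homogeneous type $(\mathbb{B}_n, d, dv_\gamma)$: a Vitali-style covering argument delivers a weak-type estimate, and a self-improvement (reverse H\"older inequality) for $B_{p,\gamma}$ weights upgrades it to the strong $(p,p)$ bound. The hardest sub-step is establishing this reverse H\"older inequality in the absence of any group translation structure on $\mathbb{B}_n$; it can be obtained via a Calder\'{o}n--Zygmund stopping-time decomposition adapted to the family of B\`{e}koll\`{e} balls intersecting $\partial \mathbb{B}_n$, exploiting the doubling of $dv_\gamma$ with respect to the pseudo-metric $d$. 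Combining the pointwise domination with this maximal function bound completes the argument.
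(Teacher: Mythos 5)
You should first note that the paper does not prove this theorem at all: it is quoted as Theorem 2.2 from B\`{e}koll\`{e}'s paper \cite{Be}, so there is no internal proof to compare against, and your sketch has to be judged against the known argument. Your necessity half is essentially the standard one and is correct in outline, with one point that needs care: knowing $|K_\gamma(z,u)| \asymp v_\gamma(D)^{-1}$ on $D^*\times D^*$ does not by itself give a lower bound on $|P_\gamma f(z)|$ for $f\ge 0$, since the kernel is complex and could oscillate. You must choose $D^*$ with aperture small enough (depending on $n+1+\gamma$) that $\arg\bigl(1-z\cdot \overline{u}\bigr)^{-(n+1+\gamma)}$ stays in a fixed subinterval of $(-\pi/2,\pi/2)$, so that $\operatorname{Re}K_\gamma(z,u)\gtrsim |K_\gamma(z,u)|$ there. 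This is a standard and fixable omission.

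The sufficiency half, however, contains a genuine gap: the claimed pointwise domination $P_\gamma^{+}f(z)\lesssim M_\gamma f(z)$ is false. The Bergman kernel is \emph{critical} for the measure $dv_\gamma$: on the annulus $\{u: d(z,u)\sim 2^k(1-|z|)\}$ one has $|K_\gamma(z,u)|\,v_\gamma\bigl(D(z,2^k(1-|z|))\bigr)\asymp 1$, so the dyadic annular decomposition produces $\asymp \log\frac{1}{1-|z|}$ terms each of size $\asymp M_\gamma f(z)$, and the logarithm cannot be removed. Concretely, taking $f\equiv 1$ gives $M_\gamma f\equiv 1$ while the Rudin--Forelli estimates show $P_\gamma^{+}1(z)\asymp \log\frac{1}{1-|z|^2}\to\infty$, so no pointwise bound of the proposed form can hold. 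This is precisely why the weighted theory of the Bergman projection is harder than that of fractional integrals: every known proof of sufficiency must couple the projection to the maximal function through a distribution-function (good-$\lambda$) inequality of Coifman--Fefferman type, as in B\`{e}koll\`{e}'s original argument, or proceed via dyadic positive operators and Sawyer-type testing conditions. Your remaining program (weak-type bound for $M_\gamma$ via a Vitali covering on the homogeneous space $(\mathbb{B}_n,d,dv_\gamma)$, plus a reverse H\"older self-improvement of $B_{p,\gamma}$) is sound and is indeed an ingredient of the real proof, but as written the bridge from $M_\gamma$ back to $P_\gamma$ is missing, and that bridge is the heart of the theorem.
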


\noindent We will also need the following result found in \cite{M} \begin{theorem} If $f \in L_a ^p (\mathbb{B}_n, dv_\gamma)$ and $g \in L_a ^q (\mathbb{B}_n, dv_\gamma),$ then  \begin{align}  \underset{u \in \mathbb{B}_n}{\sup}  \ \{B_\gamma (|f k_u ^{ 1- \frac{2}{p}}|^p ) (u) \}^\frac{1}{p}  \{B_\gamma (|g k_u ^{ 1- \frac{2}{q}}|^q) (u) \}^\frac{1}{q}  < \infty  \nonumber  \end{align} whenever $T_f T_{\overline{g}}$ is bounded on $L_a ^p (\mathbb{B}_n, dv_\gamma)$. \end{theorem}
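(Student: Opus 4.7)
The first step is to rewrite the hypothesized supremum in a concrete form. Expanding the definition of the Berezin transform, one computes
\[
B_\gamma(|f k_u^{1-2/p}|^p)(u) = \int_{\mathbb{B}_n} |f|^p |k_u|^{p-2} \cdot |k_u|^2\, dv_\gamma = \|f k_u\|_p^p,
\]
where $k_u(z) = K_\gamma(z,u)/\sqrt{K_\gamma(u,u)}$ is the $L_a^2$-normalized Bergman kernel; an identical calculation handles the $g$-factor. Thus the theorem reduces to $\sup_{u \in \mathbb{B}_n} \|f k_u\|_p \|g k_u\|_q < \infty$.

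The central computational input I would rely on is the identity $T_{\overline{g}} k_u = \overline{g(u)}\, k_u$ for analytic $g$. One establishes it by expanding $\overline{g} k_u$ against any orthonormal basis $\{e_\alpha\}$ of $L_a^2$: the reproducing property applied to the analytic function $g e_\alpha$ gives $\langle \overline{g} k_u, e_\alpha \rangle_\gamma = \int k_u \overline{g e_\alpha}\, dv_\gamma = \overline{g(u) e_\alpha(u)}/\sqrt{K_\gamma(u,u)}$, and summing against $\{e_\alpha\}$ reconstructs $\overline{g(u)}\, k_u$ (here $\overline{g} k_u$ is analyzed as an $L^2$-element since $k_u$ is bounded for each fixed $u$).

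Assume $\|T_f T_{\overline{g}}\|_{\mathcal{L}(L_a^p)} = C$. Testing on $k_u \in L_a^p$ yields $|g(u)|\, \|f k_u\|_p \leq C \|k_u\|_p$. Using the $L_a^p$--$L_a^q$ duality and the Banach adjoint identity $(T_f T_{\overline{g}})^* = T_g T_{\overline{f}}$, the operator $T_g T_{\overline{f}}$ is bounded on $L_a^q$ with the same norm; the parallel test there yields $|f(u)|\, \|g k_u\|_q \leq C \|k_u\|_q$. Multiplying these and invoking the Forelli--Rudin norm asymptotic $\|k_u\|_p \|k_u\|_q \sim 1$, I would obtain
\[
|f(u) g(u)| \cdot \|f k_u\|_p \|g k_u\|_q \leq C^2.
\]

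The main obstacle is removing the factor $|f(u) g(u)|$ to reach the claimed uniform bound on $\|f k_u\|_p \|g k_u\|_q$ alone. This is the real technical content of \cite{M}: one must test the sesquilinear form $\langle T_f T_{\overline{g}} \cdot, \cdot \rangle_\gamma$ not on $k_u$ itself, but on the analytic functions arising (via Hahn--Banach) as the $L_a^q$-dual of $f k_u$ and the $L_a^p$-dual of $g k_u$. Extracting the two norms individually—rather than coupled through $|fg(u)|$—requires Bergman-kernel asymptotics of Forelli--Rudin type together with the M\"obius invariance of $P_\gamma$ on $L_a^2$, and this forms the technical heart of Michalska's argument.
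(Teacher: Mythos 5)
Your proposal does not actually prove the statement, and in fact the paper does not prove it either: it is quoted as Theorem $2.3$ with a citation to Miao \cite{M}, so the "proof" in the paper is precisely the external reference whose content you end up deferring to. What you do establish is correct but strictly weaker. The reduction $B_\gamma(|f k_u^{1-2/p}|^p)(u) = \|f k_u\|_p^p$ is right, the identity $T_{\overline{g}}k_u = \overline{g(u)}\,k_u$ and the duality $(T_fT_{\overline{g}})^* = T_gT_{\overline{f}}$ are fine, and testing on $k_u$ in $L_a^p$ and in $L_a^q$ gives
\begin{align}
|f(u)g(u)|\,\|f k_u\|_p\,\|g k_u\|_q \leq C\,\|T_fT_{\overline{g}}\|^2, \nonumber
\end{align}
using $\|k_u\|_p\|k_u\|_q \approx 1$. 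This is essentially the computation the paper itself carries out inside the proof of Theorem $2.1$. But the theorem you were asked to prove has no factor $|f(u)g(u)|$, and its hypothesis is boundedness alone, with no lower bound on $|fg|$: the functions $f$ and $g$ are merely in $L_a^p$ and $L_a^q$ and may vanish or decay, so you cannot divide the coupled inequality by $|f(u)g(u)|$. Removing that factor is not a finishing touch; it is the entire content of the theorem.

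Your closing sketch of how the decoupling should go is also not a viable route. Choosing Hahn--Banach norming functionals for $f k_u$ in $(L_a^p)^*$ and for $g k_u$ in $(L_a^q)^*$ does not help, because to exploit the boundedness of $T_fT_{\overline{g}}$ you must still feed the operator a concrete test function, and any test function built from $k_u$ returns the eigenvalue relation $T_{\overline{g}}k_u = \overline{g(u)}k_u$, which reinstates exactly the point-value factor you are trying to remove (the only natural preimage of $f k_u$ under $T_fT_{\overline{g}}$ is $k_u/\overline{g(u)}$). The genuine argument is of a different nature: in the Hardy-space analogue it is Treil's argument (which the paper invokes for $(2.7)$ in Theorem $2.4$), and in the weighted Bergman $L^p$ setting it is Miao's adaptation, which requires different test functions and Forelli--Rudin-type estimates rather than the reproducing-kernel eigenvalue trick. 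Since neither you nor the paper reproduces that argument, your write-up should either cite \cite{M} for the full statement, as the paper does, or actually carry out the decoupling step.
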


\noindent With the aid of Theorem $2.2$ and $2.3$,  we can now prove Theorem $2.1$:

\medskip
\noindent  \textit{Proof of Theorem} $2.1$:  \  First we will prove necessity. The proof of this direction is similar to the corresponding result in \cite{SZ1, SZ2}, though we include it for the sake of completion.    Assume that $T_f T_{\overline{g}}$ is bounded and invertible on $L_a ^p (\mathbb{B}_n, dv_\gamma)$, so that $(T_f T_{\overline{g}})^* = T_g T_{\overline{f}}$ is bounded and invertible on $L_a ^q (\mathbb{B}_n, dv_\gamma)  = \left(L_a ^p (\mathbb{B}_n, dv_\gamma) \right)^*$.  Let  $C_1 = \|(T_f T_{\overline{g}})^{-1}\|_p$ and $ C_2 =  \|(T_g T_{\overline{f}})^{-1}\|_q. $   First note that $T_f T_{\overline{g}} k_u = \overline{g(u)} f k_u$, so that \begin{align} \|k_u\|_p  & \leq C_1 \|T_f T_{\overline{g}} k_u\|_p  \nonumber   \\ & =  C_1 |g(u)| \{B_\gamma (|f k_u ^{1 - 2/p}|^p ) (u) \} ^\frac{1}{p}  \nonumber  \end{align}
Similarly, since $\left(T_f T_{\overline{g}}\right)^* = T_g T_{\overline{f}}$ is bounded and invertible on $L_a ^q (\mathbb{B}_n, dv_\gamma)  = \left(L_a ^p (\mathbb{B}_n, dv_\gamma) \right)^*$, we have that \begin{align} \|k_u\|_q \leq C_2   |f(u)| \{B_\gamma (|g k_u ^{1 - 2/q}|^q )  (u)\}^\frac{1}{q}.  \nonumber \end{align} By Theorem $2.2$, we have that \begin{align} \{B_\gamma (|f k_u ^{1 - 2/p}|^p ) (u) \} ^\frac{1}{p}   \{B_\gamma (|g k_u ^{1 - 2/q}|^q ) (u)\} ^\frac{1}{q}   \leq M \tag{2.2}\end{align} for some $M > 0$ independent of $u$.  Moreover, an application of H\"{o}lder's inequality gives us that $\|k_u\|_p \|k_u\|_q \geq 1$ for any $u \in \mathbb{D}$, which tells us that \begin{align}  C_1 C_2 M |f(u)| |g(u)|  \geq \|k_u\|_p \|k_u\|_q  \geq 1 \nonumber \end{align} which means that  $\underset{u  \in \mathbb{B}_n}{\inf} \ |f(u)||g(u)| > 0. $

\bigskip Now we will prove sufficiency. Let $M$ be the constant in $(2.2)$ and let \begin{align} \eta = \underset{u \in \mathbb{B}_n}{\inf}  \ |f(u)| |g(u)|. \nonumber \end{align}

\noindent Let $\varphi_u$ be the M\"{o}bius transformation that interchanges $0$ and $u$.  By H\"{o}lder's inequality, we have that \begin{align} |f(u)| & = (1 - |u|^2)^{\left(\frac{n + 1 + \gamma}{2}\right)\left(1 - \frac{2}{p}\right)} |f \circ \varphi_u (0)| |k_u \circ \varphi_u (0)| ^{1 - \frac{2}{p}} \nonumber \\  & \leq (1 - |u|^2)^{\left(\frac{n + 1 + \gamma}{2}\right)\left(1 - \frac{2}{p}\right)} \{B_\gamma( |f  k_u ^{1 - 2/p}|^p ) (u) \}^\frac{1}{p}. \tag{2.3} \end{align}  and similarly \begin{align} |g(u)| \leq (1 - |u|^2)^{\left(\frac{n + 1 + \gamma}{2}\right)\left(1 - \frac{2}{q}\right)} \{B_\gamma( |g  k_u ^{1 - 2/q}|^q ) (u) \}^\frac{1}{q} \nonumber \end{align} which means that \begin{align} \underset{u \in \mathbb{B}_n}{\sup} \ |f(u)| |g(u)| \leq M \nonumber \end{align}

\noindent
Also, since $|g(u)|^q \geq \eta^q |f^{-1} (u)|^q $ we have that \begin{align} \{B_\gamma ( |f^{-1} k_u ^{1 - 2/q}| ^q )(u) \}^\frac{1}{q} \leq \eta^{-1} \{B_\gamma ( |g k_u ^{1 - 2/q}| ^q )(u) \}^\frac{1}{q} \nonumber \end{align} which means that \begin{align}  \underset{u \in \mathbb{B}_n}{\sup}  \ \{B_\gamma (|f k_u ^{ 1- \frac{2}{p}}|^p ) (u) \}^\frac{1}{p}  \{B_\gamma(|f^{-1} k_u ^{ 1- \frac{2}{q}}|^q) (u) \}^\frac{1}{q}  < \infty. \tag{2.4}   \end{align}

 If $w = |f|^p$, then it is easy to see that $(2.4)$ and Lemma $2$ in \cite{Be} implies that $w \in \text{B}_{p, \gamma}$, so that $T_f T_{\overline{g}}$ is bounded on $L_a ^p (\mathbb{B}_n, dv_\gamma)$.  Also, since $\phi = (f \overline{g})^{-1}$ is bounded, $T_\phi$ is bounded on $L_a ^p (\mathbb{B}_n, dv_\gamma)$. Moreover, it is easy to check that \begin{align} T_f T_{\overline{g}} T_\phi = I = T_\phi T_f T_{\overline{g}} \nonumber \end{align} which completes the proof. \hfill $\square$

\medskip
We will now prove the Hardy space version of Theorem $2.1$.  First, recall that the Muckenhoupt class A${}_p$ is the collection of all weights $w$ on $\partial \mathbb{D}$ where \begin{align} \underset{I \subseteq \partial \mathbb{D}}{\sup} \ \left(\frac{1}{|I|} \int_I w \, d\theta \right) \left(\frac{1}{|I|} \int_I w^{ - \frac{1}{p - 1}} \, d\theta \right)^{p - 1} < \infty \tag{2.5}  \end{align} and where  the supremum is taken over all arcs $I \subseteq \partial \mathbb{D}$.  It is well known \cite{CF} that the Hardy projection $P^+$ is bounded on $L^p(\partial \mathbb{D}, w \, d\theta)$ if and only if $w \in \text{A}_p$.  With this result, we will now prove the following:

 \begin{theorem}  If $f \in H ^p (\partial{\mathbb{D}})$ and $g \in H ^q (\partial{\mathbb{D}})$, then the Toeplitz product $T_f T_{\overline{g}}$ is bounded and invertible on $H ^p (\partial{\mathbb{D}})$ if and only if \begin{align} \underset{u \in \mathbb{D}}{\inf}  \  |f (u)||g(u)| > 0  \tag{2.6} \end{align} and  \begin{align}  \underset{u \in \mathbb{D}}{\sup}  \ \{\widehat{|fk_u ^{1 - \frac{2}{p}} |^p } (u) \} ^\frac{1}{p} \{\widehat{|g k_u ^{1 - \frac{2}{q}}|^q} (u) \}^\frac{1}{q}  < \infty.  \nonumber \end{align} \end{theorem}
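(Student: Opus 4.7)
The strategy is to translate the proof of Theorem 2.1 essentially line-by-line from the Bergman setting to the Hardy setting. The B\`{e}koll\`{e}--Bonami class B${}_{p,\gamma}$ is replaced by the Muckenhoupt class A${}_p$, the Berezin transform by the Poisson extension $\widehat{\cdot}$, and the Bergman projection by $P^+$. The computational backbone is the identity $\widehat{h}(u)=\int h\,|k_u|^2\,d\sigma$ (since $|k_u|^2$ is the Poisson kernel), which in particular gives $\widehat{|fk_u^{1-2/p}|^p}(u)=\|fk_u\|_p^p$ exactly as in the Bergman proof.

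For necessity, the identity $T_{\bar g}k_u=\overline{g(u)}k_u$ holds on $H^p$ for analytic $g$ by the same duality argument as in the Bergman case (the adjoint $T_g$ acts by multiplication on analytic test functions in $H^q$). Hence $T_fT_{\bar g}k_u=\overline{g(u)}\,fk_u$, and assuming $T_fT_{\bar g}$ is bounded and invertible, taking $L^p$-norms and using the computation of $\widehat{\cdot}$ above yields
$$\|k_u\|_p\le C_1\,|g(u)|\{\widehat{|fk_u^{1-2/p}|^p}(u)\}^{1/p},\quad \|k_u\|_q\le C_2\,|f(u)|\{\widehat{|gk_u^{1-2/q}|^q}(u)\}^{1/q}.$$
I would then invoke the Hardy-space analog of Theorem 2.3 (Theorem 8 of \cite{Zh}, already cited in the excerpt) to bound the product of the two Poisson extensions by some $M$. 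H\"older's inequality gives $\|k_u\|_p\|k_u\|_q\ge\int|k_u|^2\,d\sigma=1$, so multiplying the two inequalities above and dividing by $M$ forces $\inf_u|f(u)||g(u)|\ge 1/(C_1C_2M)$.

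For sufficiency, the function $F(z)=f(z)k_u(z)^{1-2/p}$ is analytic on $\mathbb{D}$ (fix any branch of the power), so $|F|^p$ is subharmonic and the Poisson representation gives $|F(u)|^p\le\widehat{|F|^p}(u)$. Unpacking and doing the analogous estimate for $g$, then multiplying, the exponents of $(1-|u|^2)$ cancel thanks to $1/p+1/q=1$, so $|f(u)||g(u)|\le\{\widehat{|fk_u^{1-2/p}|^p}(u)\}^{1/p}\{\widehat{|gk_u^{1-2/q}|^q}(u)\}^{1/q}$, and the sup hypothesis gives $|fg|\in L^\infty(\mathbb{D})$. Combined with the inf hypothesis $|g|\ge\eta/|f|$, the pointwise inequality $|f^{-1}k_u^{1-2/q}|^q\le\eta^{-q}|gk_u^{1-2/q}|^q$ converts the sup hypothesis into
$$\sup_u\{\widehat{|fk_u^{1-2/p}|^p}(u)\}^{1/p}\{\widehat{|f^{-1}k_u^{1-2/q}|^q}(u)\}^{1/q}<\infty.$$
Once this ``diagonal'' sup condition is shown to be equivalent to $w=|f|^p\in\text{A}_p$, the Hunt--Muckenhoupt--Wheeden theorem gives $P^+$ bounded on $L^p(|f|^p\,d\sigma)$, which together with $|fg|\in L^\infty$ implies boundedness of $T_fT_{\bar g}$ on $H^p$ via the two-weight reformulation of the Introduction. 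Invertibility then follows exactly as in Theorem 2.1 by producing the two-sided inverse $T_\phi$ with $\phi=(f\bar g)^{-1}\in L^\infty$.

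The main obstacle I anticipate is the Hardy analog of Lemma 2 of \cite{Be}, i.e.\ the implication ``diagonal sup $\Rightarrow$ A${}_p$''. I would verify it by observing that $|k_u(\zeta)|^p\,d\sigma(\zeta)/\|k_u\|_p^p$ is uniformly comparable to the normalized Lebesgue measure on the arc $I_u\subset\partial\mathbb{D}$ of length comparable to $1-|u|$ centered at $u/|u|$; substituting, the diagonal quantity (with $g=f^{-1}$) reduces to $\bigl(\tfrac{1}{|I_u|}\int_{I_u}|f|^p\,d\sigma\bigr)^{1/p}\bigl(\tfrac{1}{|I_u|}\int_{I_u}|f|^{-q}\,d\sigma\bigr)^{1/q}$, whose $p$-th power is precisely the A${}_p$ characteristic of $|f|^p$ on $I_u$. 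Letting $u$ range over $\mathbb{D}$ exhausts all arcs and yields the equivalence; from that point on the argument is a cosmetic translation of the Bergman proof.
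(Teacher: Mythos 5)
Your overall architecture is the same as the paper's: reduce everything to the diagonal condition for $f$ and $f^{-1}$, identify $\widehat{|fk_u^{1-2/p}|^p}(u)=\|fk_u\|_p^p$, conclude $|f|^p\in\mathrm{A}_p$, invoke the weighted boundedness of $P^+$ together with $\sup|fg|<\infty$, and finish invertibility with $T_\phi$, $\phi=(f\overline{g})^{-1}$. Two of your steps are in fact cleaner than the paper's: for $\sup_u|f(u)||g(u)|<\infty$ you use directly that $fk_u^{1-2/p}\in H^p$ (the kernel factor is bounded above and below for fixed $u$) and the sub-mean-value/Poisson inequality, whereas the paper dilates ($f_r$), passes from (2.3) through the weak-$*$ limit $dA_\gamma\to d\sigma$ as $\gamma\to-1^+$, and then lets $r\to1^-$; and you spell out the step ``diagonal sup $\Rightarrow |f|^p\in\mathrm{A}_p$'' via the lower bound $|k_u|\gtrsim|I_u|^{-1/2}$ on the arc $I_u$, which the paper dismisses as ``easy to see.'' One caution there: your phrase that $|k_u|^p\,d\sigma/\|k_u\|_p^p$ is ``comparable to'' normalized arc measure, and that this ``yields the equivalence,'' overstates the matter — the kernel measure has tails, so the arc comparison only gives the implication you actually need (diagonal sup implies $\mathrm{A}_p$); the converse is the nontrivial half of the paper's Proposition 4.4 and is proved only in Section 5. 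Since you never use the converse, this does not damage the proof.

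The genuine gap is in the necessity direction. The inequality you need — boundedness of $T_fT_{\overline{g}}$ on $H^p$ implies $\sup_u\{\widehat{|fk_u^{1-2/p}|^p}(u)\}^{1/p}\{\widehat{|gk_u^{1-2/q}|^q}(u)\}^{1/q}<\infty$, i.e.\ condition (2.7) — is not Theorem 8 of \cite{Zh}: that theorem is the \emph{sufficiency} criterion (boundedness from the condition with exponents $2+\epsilon$, in the $p=2$ setting) used by Stroethoff--Zheng, and it gives no upper bound on the product of Poisson extensions from boundedness of the operator. The paper obtains (2.7) from an argument of Treil reproduced in \cite{S}, adapted from $p=2$ to general $p$, and this argument uses identities special to $n=1$ (see Remark 2.5). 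So your citation would not deliver the constant $M$ you then feed into $\|k_u\|_p\|k_u\|_q\ge1$ to get $\inf|f||g|>0$; you must either reproduce the Treil-type argument for general $p$ or cite it correctly. Apart from this misattribution, the remaining chain (the inequalities $\|k_u\|_p\le C_1|g(u)|\{\widehat{|fk_u^{1-2/p}|^p}(u)\}^{1/p}$ and its dual, the conversion of the hypotheses into the diagonal condition via $|g|\ge\eta/|f|$, the Hunt--Muckenhoupt--Wheeden step, and the factorization $T_fT_{\overline{g}}h=fP^+(\overline{g}h)$ giving boundedness from $|fg|\in L^\infty$) is correct and matches the paper.
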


\begin{proof} First we prove necessity, so assume that $T_f T_{\overline{g}}$ is bounded and invertible on $H^p(\partial \mathbb{D})$.  By an argument that is almost identical to the argument (due to S. Treil) in \cite{S}, we have that \begin{align}  \underset{u \in \mathbb{D}}{\sup}  \ \{\widehat{|fk_u ^{1 - \frac{2}{p}} |^p } (u) \} ^\frac{1}{p} \{\widehat{|g k_u ^{1 - \frac{2}{q}}|^q} (u) \}^\frac{1}{q}  < \infty.   \tag{2.7} \end{align} Thus, by an argument that is similar to the proof of Theorem $2.1$, we have that  \begin{align} \underset{u  \in \mathbb{D}}{\inf} \ |f(u)||g(u)| > 0. \nonumber \end{align}

\medskip
Now we prove sufficiency.  Fix $u \in \mathbb{D}$ and let $f_r (u) = f(ru)$ for some fixed $0 < r < 1$.  If we replace $f$ with $f_r$ in $(2.3)$ then since $dA_\gamma \stackrel{\text{wk}^*}{\longrightarrow} d\sigma$ on $C(\overline{\mathbb{D}})$ as $\gamma \rightarrow -1^{+}$, $(2.3)$ gives us that \begin{align} |f(ru)| \leq (1 - |u|^2)^{\frac{1}{2}\left(1 - \frac{2}{q}\right)} \{\widehat{ |f_r  k_u ^{1 - \frac{2}{p}}|^p } (u) \}^\frac{1}{p}. \nonumber \end{align} Thus, since $f  \in H^p (\partial \mathbb{D})$, we can let $r \rightarrow 1^-$ to get \begin{align} |f(u)| \leq (1 - |u|^2)^{\frac{1}{2}\left(1 - \frac{2}{q}\right)} \{\widehat{ |f  k_u ^{1 - \frac{2}{p}}|^p } (u) \}^\frac{1}{p}. \nonumber \end{align} Applying the same inequality to $g$ and using the hypothesis of Theorem $2.4$, we  have that \begin{align} \underset{u \in \mathbb{D}}{\sup} \ |f(u)||g(u)| < \infty. \tag{2.8} \end{align}

\noindent Combining $(2.6), (2.7)$ and $(2.8)$ as we did in the proof of Theorem $2.1$, it is easy to see that $w = |f|^p$ is in the Muckenhoupt A${}_p$ class, which implies that $T_f T_{\overline{g}}$ is bounded.  Finally, since $\phi = (f\overline{g})^{-1} $ is bounded, it is again easy to see that \begin{align} T_f T_{\overline{g}} T_\phi = I = T_\phi T_f T_{\overline{g}} \nonumber \end{align} which implies that $T_f T_{\overline{g}}$ is invertible.
\end{proof}

\noindent
\begin{remark} It is known \cite{LR} that the Hardy projection $P^+$ is bounded on $L^p(\partial \mathbb{B}_n, w \, d\sigma)$ if and only if $w$ satisfies $(2.5)$ (where the supremum is taken over all non-isotropic balls in $\partial \mathbb{B}_n$).  Furthermore, except for the proof that the boundedness of $T_f T_{\overline{g}}$ implies $(2.7)$ (which uses identities that only hold when $n = 1$, see \cite{S} for more details), the entire proof of Theorem $2.4$ carries over to the case $n > 1$.  Thus, we will conjecture that Theorem $2.4$ holds for the unit sphere $\partial \mathbb{B}_n$ when $n > 1$. \end{remark}

\section{Invertible Toeplitz products and weighted norm inequalities for the Fock projection.}

For any $\alpha > 0$ and any positive a.e. function $w$ on $\mathbb{C}^n$, let $\mathcal{L}_\alpha ^p (w)$ be the Banach space of all $f$ where $f(\cdot) e^{- \frac{\alpha}{2} |\cdot|^2} \in L^p(\mathbb{C}^n, w dv)$.  Furthermore, we will let $\mathcal{L}_\alpha ^p$ denote $\mathcal{L}_\alpha ^p (w)$ when $w$ is the constant $\left(p \alpha/ 2 \pi\right)^n$.    It is well known (see \cite{JPR})  that the orthogonal projection $P_\alpha $ from $\mathcal{L}_\alpha ^2$ onto the Fock space $F_\alpha ^2 $ is given by \begin{align} P_\alpha f (z) = \int_{\mathbb{C}^n} e^{\alpha z \cdot u} f(u) d\mu_\alpha(u) \nonumber \end{align} where  $d\mu_\alpha$ is the Gaussian measure \begin{align} d\mu_\alpha (u) = \left(\frac{\alpha}{\pi}\right)^n e^{-\alpha |u|^2} dv(u). \nonumber \end{align}

\medskip
In this section, we will first state and prove weighted norm inequalities for the Fock projection $P_\alpha$, and then use these weighted norm inequalities to characterize bounded and invertible Toeplitz products $T_f T_{\overline{g}}$ on the Fock space $F_\alpha ^p$ for $p > 1$ when $f$ and $g$ are entire. In particular, as was stated in the introduction, we will show that $T_f T_{\overline{g}}$ is bounded if and only if $f = e^P$ for a linear polynomial $P$ on $\mathbb{C}^n$ and $g = c e^{-P}$ for some $c \in \mathbb{C}$ (assuming neither $f$ nor $g$ vanish on $\mathbb{C}^n$).  Furthermore, as was mentioned in the introduction, although this result (with a simpler proof) has recently appeared in \cite{CPZ}, our methods most likely easily generalize to a wide class of weighted Fock spaces.

\medskip
  Let $Q_r (z)$ be the cube in $\mathbb{C}^n$ with center $z$ and side length $r > 0$.  Let A${}_{p, r}  $ denote the class of weights $w$ on $\mathbb{C}^n$ where  \begin{align} \underset{z \in \mathbb{C}^n}{\sup} \left(\frac{1}{v(Q_r(z))} \int_{Q_r(z)} w \, dv \right)\left(\frac{1}{v(Q_r(z) )} \int_{Q_r(z) } w ^{- \frac{1}{p - 1}} \, dv \right) ^{p - 1} < C_r \tag{3.1} \end{align} for some $0 < C_r < \infty$.

  \begin{theorem} The following are equivalent for any weight $w$ on $\mathbb{C}^n$ and any $\alpha > 0$:
\begin{enumerate}
\item[(a)] $w \in \text{A}_{p, r}$ for some $r > 0$.
\item[(b)] $H_\alpha$ is bounded on $L^p(\mathbb{C}^n, w \, dv)$.
\item[(c)] $P_\alpha$ is bounded on $\mathcal{L}_\alpha^p (w)$
\item[(d)] $w \in \text{A}_{p, r}$  for all $r > 0$.
\end{enumerate}

\noindent Here, $H_\alpha$ is the integral operator given by \begin{align} H_\alpha f (z) =  \int_{\mathbb{C}^n} e^{- \frac{\alpha}{2} |z - u|^2} f(u) \, dv(u). \nonumber \end{align}

\end{theorem}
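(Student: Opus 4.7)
I propose to establish the circular chain $(d)\Rightarrow(b)\Rightarrow(c)\Rightarrow(a)\Rightarrow(d)$. Of these, $(d)\Rightarrow(a)$ is immediate, and $(a)\Rightarrow(d)$ is a standard self-improvement for the local Muckenhoupt classes: $A_{p,r}$ at one scale implies a local reverse H\"older inequality (via the classical $A_p$-to-reverse-H\"older argument applied inside cubes of side $r$) together with local doubling, which lets one chain adjacent grid cubes to upgrade $A_{p,r}$ to $A_{p,r'}$ at every other scale. The substantive content therefore lies in the three operator implications, and the main obstacle will be $(a)\Rightarrow(b)$, where a convolution against the Gaussian $e^{-\alpha|\cdot|^2/2}$ must be controlled purely by local weighted-average estimates.

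For $(a)\Rightarrow(b)$, fix $r$ with $w\in A_{p,r}$ and tile $\mathbb{C}^n$ by a grid of disjoint half-open cubes $\{Q_k\}_{k\in\mathbb{Z}^{2n}}$ of side $r$. A direct estimate gives $|z-u|\geq cr(|k-j|-\sqrt{2n})_+$ for $z\in Q_k$, $u\in Q_j$, hence
\begin{equation*}
H_\alpha f(z)\leq C|Q|\sum_{j\in\mathbb{Z}^{2n}}e^{-c\alpha r^2|k-j|^2}\,A_{Q_j}(|f|)\qquad(z\in Q_k),
\end{equation*}
with $A_{Q_j}(|f|)=|Q_j|^{-1}\int_{Q_j}|f|\,dv$. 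Rewriting this as $H_\alpha f=\sum_{\ell\in\mathbb{Z}^{2n}}e^{-c\alpha r^2|\ell|^2}T_\ell f$, where $T_\ell f(z):=A_{Q_{k(z)+\ell}}(|f|)$, the hypothesis $w\in A_{p,r}$ together with the polynomial control of the weight ratio $w(Q_k)/w(Q_{k+\ell})$ in $|\ell|$ (a consequence of local $A_\infty$) bounds $\|T_\ell\|_{L^p(w\,dv)\to L^p(w\,dv)}$ by $C(1+|\ell|)^n$. Since the Gaussian coefficient dominates any polynomial, Minkowski produces an absolutely convergent sum, hence boundedness of $H_\alpha$ on $L^p(w\,dv)$. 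The delicate step is precisely the uniform-in-$\ell$ control of the translated averaging operators $T_\ell$.

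For $(b)\Rightarrow(c)$, the identity $2\,\mathrm{Re}(z\cdot\bar u)=|z|^2+|u|^2-|z-u|^2$ applied to $|e^{\alpha z\cdot\bar u}|$ yields the pointwise majorisation
\begin{equation*}
|P_\alpha f(z)|\,e^{-\frac{\alpha}{2}|z|^2}\leq C\int_{\mathbb{C}^n}e^{-\frac{\alpha}{2}|z-u|^2}|f(u)|e^{-\frac{\alpha}{2}|u|^2}\,dv(u)=C\,H_\alpha g(z),
\end{equation*}
with $g(u):=|f(u)|e^{-\frac{\alpha}{2}|u|^2}$ satisfying $\|g\|_{L^p(w\,dv)}=\|f\|_{\mathcal{L}_\alpha^p(w)}$; boundedness of $H_\alpha$ on $L^p(w\,dv)$ therefore transfers at once to $P_\alpha$ on $\mathcal{L}_\alpha^p(w)$.

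Finally for $(c)\Rightarrow(a)$, fix $z_0\in\mathbb{C}^n$ and choose $r>0$ so small that $\alpha n r^2\leq\pi/2$. The Weyl translation $W_{z_0}h(z)=e^{\alpha(z\cdot\bar z_0-|z_0|^2/2)}h(z-z_0)$ intertwines $P_\alpha$ with itself and satisfies $\|W_{z_0}f\|_{\mathcal{L}_\alpha^p(w)}=\|f\|_{\mathcal{L}_\alpha^p(\tilde w)}$ with $\tilde w(v):=w(v+z_0)$, so it suffices to test $P_\alpha$ at the origin against $\tilde w$. With the test function $f(u)=\chi_{Q_r(0)}(u)\tilde w(u)^{-1/(p-1)}e^{\frac{\alpha}{2}|u|^2}$, the Gaussian lower bound $e^{-\alpha|z'-u|^2/2}\geq e^{-\alpha nr^2}$ combined with the phase bound $|\alpha\,\mathrm{Im}(z'\cdot\bar u)|\leq\alpha nr^2/2\leq\pi/4$ valid on $Q_r(0)\times Q_r(0)$ produces
\begin{equation*}
|P_\alpha f(z')|\,e^{-\frac{\alpha}{2}|z'|^2}\geq c_r\int_{Q_r(0)}\tilde w^{-\frac{1}{p-1}}\,dv\qquad(z'\in Q_r(0)),
\end{equation*}
and inserting this into $\|P_\alpha f\|_{\mathcal{L}_\alpha^p(\tilde w)}\leq\|P_\alpha\|\,\|f\|_{\mathcal{L}_\alpha^p(\tilde w)}$ yields the $A_{p,r}$ constant at $Q_r(z_0)$, uniformly in $z_0$, completing the cycle.
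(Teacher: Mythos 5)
Your architecture is sound and, for the three operator implications, essentially the paper's: you discretize $H_\alpha$ over a grid of cubes of side $r$ and sum Gaussian coefficients against weighted averages for $(a)\Rightarrow(b)$; you use the pointwise domination $|P_\alpha f(z)|e^{-\frac{\alpha}{2}|z|^2}\lesssim H_\alpha\bigl(|f|e^{-\frac{\alpha}{2}|\cdot|^2}\bigr)(z)$ for $(b)\Rightarrow(c)$; and you test $P_\alpha$ on $\chi_{Q}w^{-1/(p-1)}e^{\frac{\alpha}{2}|u|^2}$ on a small cube for $(c)\Rightarrow(a)$ (your Weyl-translation reduction is a legitimate substitute for the paper's device of building the phase $e^{-i\alpha\,\mathrm{Im}(z_0\cdot u)}$ into the test function). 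Where you genuinely diverge is in closing the cycle: the paper proves $(b)\Rightarrow(d)$ by running the same testing argument against the \emph{positive} operator $H_\alpha$, where no phase restriction forces the cube to be small, whereas you propose a direct real-variable proof of $(a)\Rightarrow(d)$. Two points need repair.

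First, your claimed bound $\|T_\ell\|_{L^p(w\,dv)\to L^p(w\,dv)}\leq C(1+|\ell|)^n$, resting on ``polynomial control'' of $w(Q_k)/w(Q_{k+\ell})$, is false: the weight $w(x)=e^{|x|}$ belongs to every class $\mathrm{A}_{p,r}$, yet the ratio of masses of translated $r$-cubes grows like $e^{cr|\ell|}$. The correct general estimate, obtained by chaining a one-step doubling bound along a lattice path (this is exactly Lemmas $3.2$--$3.4$ of the paper), is $C^{|\ell|}$; since $e^{-c\alpha r^2|\ell|^2}$ dominates exponentials just as well as polynomials, your sum still converges and the implication survives, but the intermediate claim must be corrected. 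Second, in $(a)\Rightarrow(d)$ the reverse H\"older inequality is a red herring: it improves integrability exponents, not scales. What is actually needed is: for $r'<r$ the implication is trivial, since any $r'$-cube lies in an $r$-cube and the two averages change only by factors of $(r/r')^{2n}$; for $r'>r$ you must show that the scale-$r$ cubes covering an $r'$-cube have mutually comparable $w$-mass (and $\sigma$-mass, $\sigma=w^{-1/(p-1)}$), i.e.\ $w(Q_r(\nu'))\leq Cw(Q_r(\nu))$ for adjacent $\nu,\nu'$. This does follow from $\mathrm{A}_{p,r}$ alone, but only because cubes with \emph{arbitrary} centers are admitted: apply H\"older on the half-overlaps with the $r$-cube centered at the midpoint of $\nu,\nu'$; note that a doubling statement of the form $w(3Q_r)\lesssim w(Q_r)$ is a priori proved using the condition at scale $3r$, which is what you are trying to establish, so this point cannot be waved through as ``local doubling.'' With the exponential bound in place of the polynomial one and this comparability argument written out, your cycle closes.
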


 We will need three simple lemmas to prove Theorem $3.1$. It should be noted that the proofs of the first two lemmas use standard arguments from the classical theory of weighted norm inequalities. In what follows, we will let \begin{align} w(S) := \int_{S} w \, dv \nonumber \end{align} for any measurable $S \subseteq \mathbb{C}^n$.

\begin{lemma} Let $Q_r = Q_r(z)$ be any cube in $\mathbb{C}^n$ of side length $r$, and let $3Q_r$ denote the cube with the same center but with side length $3r$.  If $w \in \text{A}_{p, 3r}$, then $w(3Q_r) \leq C w(Q_r)$ for some constant $C > 0$ independent of $Q_r$ (but obviously depending on $r$.) \end{lemma}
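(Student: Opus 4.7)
The plan is to derive the doubling property directly from the $A_{p,3r}$ condition by the standard two-step argument: first lower-bound the integral of $w^{-1/(p-1)}$ over $Q_r$ using H\"older's inequality, then plug this lower bound into the $A_{p,3r}$ inequality applied to the cube $3Q_r$.

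Concretely, I would first apply H\"older's inequality on $Q_r$ to write
\begin{align*}
v(Q_r) = \int_{Q_r} w^{1/p} \cdot w^{-1/p} \, dv \leq w(Q_r)^{1/p} \left(\int_{Q_r} w^{-\frac{1}{p-1}}\, dv\right)^{(p-1)/p},
\end{align*}
which rearranges to
\begin{align*}
\left(\int_{Q_r} w^{-\frac{1}{p-1}}\, dv\right)^{p-1} \geq \frac{v(Q_r)^{p}}{w(Q_r)}.
\end{align*}
Since $Q_r \subseteq 3Q_r$, the left-hand integral is majorized by the corresponding integral over $3Q_r$, so
\begin{align*}
\left(\int_{3Q_r} w^{-\frac{1}{p-1}}\, dv\right)^{p-1} \geq \frac{v(Q_r)^{p}}{w(Q_r)}.
\end{align*}

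Next, applying the $A_{p,3r}$ condition (3.1) to the cube $3Q_r$ gives
\begin{align*}
w(3Q_r) \left(\int_{3Q_r} w^{-\frac{1}{p-1}}\, dv\right)^{p-1} \leq C_{3r}\, v(3Q_r)^{p}.
\end{align*}
Combining the two displays yields
\begin{align*}
w(3Q_r) \cdot \frac{v(Q_r)^{p}}{w(Q_r)} \leq C_{3r}\, v(3Q_r)^{p},
\end{align*}
and since $v(3Q_r)/v(Q_r) = 3^{2n}$ (as $\mathbb{C}^n$ is $\mathbb{R}^{2n}$ for Lebesgue-measure purposes), one concludes
\begin{align*}
w(3Q_r) \leq C_{3r} \cdot 3^{2np}\, w(Q_r),
\end{align*}
giving the desired constant $C = C_{3r}\cdot 3^{2np}$ depending only on $r$ and $p$.

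There is no real obstacle here; the argument is completely routine and mirrors the classical proof that Muckenhoupt weights are doubling. The only minor point to be careful about is book-keeping the Lebesgue dimension ($2n$ rather than $n$) when computing the ratio $v(3Q_r)/v(Q_r)$, and making sure one uses the $A_{p,3r}$ hypothesis on $3Q_r$ (so that the inequality is available at that scale) rather than on $Q_r$ itself.
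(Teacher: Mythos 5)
Your proposal is correct and is essentially the paper's own proof: both use H\"older's inequality on $Q_r$ to bound $v(Q_r)$ by $w(Q_r)^{1/p}\left(\int_{Q_r} w^{-\frac{1}{p-1}}\,dv\right)^{(p-1)/p}$, enlarge the second integral to $3Q_r$, and then invoke the $\text{A}_{p,3r}$ condition on the cube $3Q_r$ to conclude $w(3Q_r)\leq C\,w(Q_r)$. The only difference is cosmetic bookkeeping (you split the chain of inequalities into two displays and make the volume ratio $3^{2np}$ explicit).
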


\begin{proof} By H\"{o}lder's inequality and $(3.1)$, there exists $C > 0$ such that \begin{align} r^{2n} & = \int_{Q_r} w ^ {1/p} w^{-1/p} \, dv \nonumber \\ & \leq (w(Q_r))^\frac{1}{p} \left(\int_{Q_r} w ^{- \frac{1}{p - 1}} \, dv \right)^{(p - 1)/p} \nonumber \\ & \leq  \left(\frac{w(Q_r)}{w(3Q_r)}\right)^\frac{1}{p} \left(\int_{3Q_r} w \, dv \right)^\frac{1}{p} \left(\int_{3Q_r} w ^{- \frac{1}{p - 1}} \, dv \right)^{(p - 1)/p} \nonumber \\ & \leq C \left(\frac{w(Q_r)}{w(3Q_r)}\right)^\frac{1}{p} \nonumber \end{align} where $C $ is independent of $Q_r$.   \end{proof}

\begin{lemma} Let $Q_r$ be any cube in $\mathbb{C}^n$ of side length $r$ and let $f$ be any measurable function on $\mathbb{C}^n$.  If $w \in \text{A}_{p, 3r}$, then there exists $C > 0$ independent of $Q_r$ and $f$ where \begin{align} \left(\int_{Q_r} |f| \, dv \right)^p \leq C \frac{1}{w(Q_r) } \int_{Q_r} |f|^p w \, dv \nonumber \end{align} \end{lemma}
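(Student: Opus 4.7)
The plan is a direct application of H\"{o}lder's inequality combined with the $A_{p,3r}$ hypothesis, in a spirit that parallels the first step of the proof of Lemma $3.2$.

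First, I would write $|f| = (|f|^p w)^{1/p} \cdot w^{-1/p}$ and apply H\"{o}lder's inequality on $Q_r$ with conjugate exponents $p$ and $p/(p-1)$, obtaining
\begin{align*}
\int_{Q_r} |f|\, dv \leq \left(\int_{Q_r} |f|^p w\, dv\right)^{1/p} \left(\int_{Q_r} w^{-1/(p-1)}\, dv\right)^{(p-1)/p}.
\end{align*}
Raising this to the $p$-th power reduces the lemma to proving the weighted estimate $\bigl(\int_{Q_r} w^{-1/(p-1)}\, dv\bigr)^{p-1} \leq C/w(Q_r)$ for some constant $C$ independent of $Q_r$ and $f$.

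To obtain this estimate, I would pass from $Q_r$ to $3Q_r$ and invoke the hypothesis $w \in A_{p,3r}$, which rewritten in absolute form on $3Q_r$ says
\begin{align*}
w(3Q_r) \left(\int_{3Q_r} w^{-1/(p-1)}\, dv\right)^{p-1} \leq C_{3r}\,(3r)^{2np}.
\end{align*}
The trivial inclusion $Q_r \subset 3Q_r$ simultaneously enlarges the reciprocal-weight integral (from $Q_r$ to $3Q_r$) and gives $1/w(3Q_r) \leq 1/w(Q_r)$; chaining these two facts with the displayed $A_{p,3r}$ inequality yields exactly the desired bound with $C = C_{3r}(3r)^{2np}$, a constant depending on $r$ but not on $Q_r$ or $f$.

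There is essentially no obstacle: the computation is one line once the H\"{o}lder split is in place. The only subtle point is the scale mismatch, which is precisely why the hypothesis is imposed at scale $3r$ rather than $r$. Observe that, unlike in Lemma $3.2$, no doubling estimate for $w$ is needed here; the trivial containment $Q_r \subset 3Q_r$ suffices to transfer the bound from $w(3Q_r)$ to $w(Q_r)$.
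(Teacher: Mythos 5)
Your proposal is correct and is essentially the paper's proof: the same H\"{o}lder split $|f| = (|f|^p w)^{1/p}\, w^{-1/p}$ on $Q_r$, followed by using the $\text{A}_p$-type hypothesis to bound $\bigl(\int_{Q_r} w^{-1/(p-1)}\, dv\bigr)^{p-1}$ by $C/w(Q_r)$. The only cosmetic difference is that the paper invokes the inclusion $\text{A}_{p,3r} \subseteq \text{A}_{p,r}$ and applies the condition at scale $r$ on $Q_r$ itself, while you apply the scale-$3r$ condition on the concentric cube $3Q_r$ together with the monotonicity of the two integrals; these are interchangeable one-line steps yielding the same constant up to a factor depending only on $r$, $n$, and $p$.
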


\begin{proof} The proof is similar to the proof of Lemma $3.2$.  In particular, since A${}_{p, 3r} \subseteq \text{A}_{p, r}$, there is some $C > 0$ independent of $Q_r$ where \begin{align} \left(\int_{Q_r} |f| \, dv \right)^p & \leq \left(\int_{Q_r} |f|^p w \, dv \right)\left(\int_{Q_r} w ^{- \frac{1}{p - 1}} \, dv \right)^{p - 1} \nonumber \\ & \leq  C \frac{1}{w(Q_r) } \int_{Q_r} |f|^p w \, dv \nonumber \end{align} \end{proof}

For the next lemma we will need the notion of a discrete path from \cite{I}.  For each $r > 0$, let $r \mathbb{Z}^{2n} $ denote the set $\{(r k_1, \ldots, r k_{2n}) \in \mathbb{R}^{2n} : k_i \in \mathbb{Z}  \}$. Since $\mathbb{R}^{2n}$ can canonically be identified with $\mathbb{C}^n$, we will treat $r \mathbb{Z}^{2n}$ as a subset of $\mathbb{C}^n$.   A subset $G = \{p_0, \ldots, p_k\}$ of $r \mathbb{Z}^{2n}$ with $k \geq 1$ is said to be a discrete segment in $r \mathbb{Z}^{2n}$ if there exists $j \in \{1, \ldots, 2n\}$ and $z
\in r\mathbb{Z}^{2n}$ such that \begin{eqnarray} p_\ell = z + \ell   (r e_j), \hspace{1cm} 0 \leq \ell \leq k \nonumber \end{eqnarray}
where $e_j$ is the standard $j ^{\text{ th }}$ basis vector of $\mathbb{R}^{2n}$.  In this setting, we say that $p_0$ and $p_k$ are the endpoints of $G$.  Also, we define the length $|G|$ of $G$ to be $|G| = k$.  Let $\nu = (r \nu_1, \ldots, r \nu_{2n})$ and $\nu' = (r \nu_1 ', \ldots, r \nu_{2n} ')$ be elements of $r \mathbb{Z}^{2n}$ where $\nu \neq \nu'$. We can enumerate the integers $\{j : \nu_j \neq {\nu}_j ', 1\leq j \leq 2n\}$ as $j_1, \ldots, j_m$ in ascending order, so that $j_1 < \cdots < j_m$ when $m > 1$.   Set $z_0 (\nu, \nu') = \nu$, and inductively define $z_t(\nu, \nu') = z_{t - 1} (\nu, \nu') + (\nu_{j_t} ' - \nu_{j_t}) (r e_{j_t})$ for $ t \in \{1, \ldots, m\}.$  Note that $z_m(\nu, \nu') = \nu'$.  Let $G_t(\nu, \nu')$ be the discrete segment in $r \mathbb{Z}^{2n} $ which has $z_{t - 1}(\nu, \nu')$ and $z_t (\nu, \nu')$  as its endpoints. The union of the discrete segments $G_1(\nu, \nu'), \ldots, G_m(\nu, \nu')$ will be denoted by $\Gamma(\nu, \nu')$.  We call $\Gamma(\nu, \nu')$ the discrete path in  $r \mathbb{Z}^{2n}$ from  $\nu$ to  $\nu'$.  Furthermore, we define the length $|\Gamma(\nu, \nu')|$ of $\Gamma(\nu, \nu')$ to be $|G_1(\nu, \nu')| + \cdots + |G_m(\nu, \nu')|.$ That is, the length of $\Gamma(\nu, \nu')$ is just the sum of the lengths of the discrete segments which make up $\Gamma(\nu, \nu')$.
In the case $\nu = \nu'$, we define the discrete path from $\nu$ to $\nu$ to be the singleton set $\Gamma(\nu, \nu) = \{\nu\}$.

\begin{lemma}  If $w \in \text{A}_{p, 3r}$ then there exists $C > 0$ independent of $\nu, \nu' \in r \mathbb{Z}^{2n}$ such that \begin{align} \frac{w( Q_r( \nu))}{w( Q_r (\nu'))} \leq C ^{|\nu - \nu'|} \nonumber \end{align} \end{lemma}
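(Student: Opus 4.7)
The plan is to reduce the estimate to the case of lattice-adjacent points in $r\mathbb{Z}^{2n}$ and then iterate along the discrete path $\Gamma(\nu, \nu')$.

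First I would establish the base case: if $\nu, \nu' \in r\mathbb{Z}^{2n}$ with $\nu' = \nu \pm r e_j$ for some $j \in \{1, \ldots, 2n\}$, then both cubes $Q_r(\nu)$ and $Q_r(\nu')$ lie inside $3Q_r(\nu)$ (and also inside $3Q_r(\nu')$). An inspection of the proof of Lemma $3.2$ shows that the argument there actually gives $w(3Q_r(z)) \leq C w(Q_r(z))$ for every $z$, with $C$ depending only on $r$ and the $\text{A}_{p,3r}$ constant of $w$. Consequently
\begin{align}
w(Q_r(\nu')) \leq w(3Q_r(\nu)) \leq C\, w(Q_r(\nu)),\qquad w(Q_r(\nu)) \leq C\, w(Q_r(\nu')),\nonumber
\end{align}
so the adjacency ratio is controlled by a fixed constant $C$ in both directions.

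Next I would iterate this estimate along the discrete path $\Gamma(\nu, \nu')$. Writing the path as a concatenation of the discrete segments $G_1, \ldots, G_m$ of $\Gamma(\nu, \nu')$, each unit step in the path moves from one lattice point to an adjacent one and costs at most a factor of $C$ in the weight ratio. Composing $|\Gamma(\nu, \nu')|$ many such steps yields
\begin{align}
\frac{w(Q_r(\nu))}{w(Q_r(\nu'))} \leq C^{|\Gamma(\nu, \nu')|}.\nonumber
\end{align}
Finally, to recover the exponent $|\nu - \nu'|$ stated in the lemma, I would compare the discrete path length with the Euclidean norm via
\begin{align}
|\Gamma(\nu, \nu')| = \tfrac{1}{r}\|\nu - \nu'\|_{\ell^1} \leq \tfrac{\sqrt{2n}}{r}\,|\nu - \nu'|,\nonumber
\end{align}
and absorb the factor $\sqrt{2n}/r$ into a new constant $C' = C^{\sqrt{2n}/r}$, producing $w(Q_r(\nu))/w(Q_r(\nu')) \leq (C')^{|\nu - \nu'|}$. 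The case $\nu = \nu'$ is trivial with the convention $(C')^0 = 1$.

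There is no real obstacle here: the whole argument is structural, resting entirely on the self-improving doubling property extracted from Lemma $3.2$ and the elementary telescoping along $\Gamma(\nu, \nu')$. The only point requiring care is to verify that the doubling constant in Lemma $3.2$ is uniform in the center of the cube (which is clear from its proof, since the $\text{A}_{p,3r}$ condition is translation-uniform in $z$), so that the iteration produces a truly $\nu, \nu'$-independent constant $C$.
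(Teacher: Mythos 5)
Your proposal is correct and follows essentially the same route as the paper: telescoping the ratio along the discrete path $\Gamma(\nu,\nu')$, controlling each adjacent step by the doubling estimate $w(3Q_r) \leq C\,w(Q_r)$ of Lemma $3.2$ (which is already its statement, so no re-inspection is needed), and then bounding $|\Gamma(\nu,\nu')|$ by $\sqrt{2n}\,|\nu-\nu'|/r$ via the $\ell^1$--$\ell^2$ (Cauchy--Schwarz) comparison. No gaps.
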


\begin{proof}  Enumerate the elements in $\Gamma(\nu, \nu')$ as $a_0, a_1, \ldots, a_k$ where $a_0 = \nu, \ a_k = \nu', \ k = |\Gamma(\nu, \nu')|,$ and \begin{align} Q_r ( a_{j-1}) \subseteq 3Q_r(  a_{j }) \nonumber \end{align} for each $j \in \{1, \ldots, k\}$. Then by Lemma $3.2$, there exists $C > 0$ where \begin{align} \frac{w(Q_r ( \nu))}{w(Q_r (\nu'))} & = \prod_{j = 1}^{k} \frac{w(Q_r (a_{j - 1}))}{w(Q_r ( a_j))} \nonumber \\ & \leq \prod_{j = 1}^{k} \frac{w(3Q_r ( a_{j }))}{w(Q_r ( a_j))} \nonumber \\ & \leq C^{|\Gamma(\nu, \nu')|}. \nonumber \end{align}  However, an easy application of the Cauchy-Schwarz inequality tells us that $|\Gamma(\nu, \nu')| \leq \frac{(2n)^\frac{1}{2} |\nu - \nu'|}{r}, $ which completes the proof.  \end{proof}

\medskip
\noindent We will now prove Theorem $3.1$.

\smallskip
\noindent \textit{Proof of Theorem $3.1$: }We will first prove that $(a) \Rightarrow (b) \Rightarrow (c) \Rightarrow (a)$. Then since trivially $(d) \Rightarrow (a)$, we will complete the proof by showing that $(b) \Rightarrow (d)$.

\medskip
Let $r' = \frac{1}{3} r$.  To show that $(a) \Rightarrow (b)$, we have:    \begin{align} \|H_\alpha & f  \|_{L^p(\mathbb{C}^n, w \, dv)} ^p  \nonumber \\ & \leq \int_{\mathbb{C}^n} \left(\int_{\mathbb{C}^n } e^{- \frac{\alpha}{2} |z - u|^2 } |f(u)| \, dv(u) \right)^p w(z) \, dv(z) \nonumber \\ & = \sum_{\nu \in r' \mathbb{Z}^{2n}} \int_{Q_{r'}(\nu)} \left(\sum_{\nu' \in r'\mathbb{Z}^{2n} } \int_{ Q_{r'} (\nu')} e^{- \frac{\alpha}{2} |z - u|^2} |f(u)| \, dv(u) \right)^p w(z) \, dv(z) \nonumber \\ & \leq C \sum_{\nu \in r' \mathbb{Z}^{2n}} \int_{Q_{r'}(\nu)} \left(\sum_{\nu' \in r'\mathbb{Z}^{2n} } e^{- \frac{\alpha}{4} |\nu - \nu'|^2}\int_{ Q_{r'} (\nu')}  |f(u)| \, dv(u) \right)^p w(z) \, dv(z) \nonumber \\ & = \sum_{\nu \in r' \mathbb{Z}^{2n}}  w(Q_{r'}(\nu))  \left(\sum_{\nu' \in r'\mathbb{Z}^{2n} } e^{- \frac{\alpha}{4} |\nu - \nu'|^2}  \int_{ Q_{r'} (\nu')}  |f(u)| \, dv(u) \right)^p  \nonumber \end{align}

\medskip
\noindent By H\"{o}lder's inequality, we have \begin{align} \sum_{\nu \in r' \mathbb{Z}^{2n}} &  w(Q_{r'}(\nu)) \left(\sum_{\nu' \in r'\mathbb{Z}^{2n} } e^{- \frac{\alpha}{4} |\nu - \nu'|^2}  \int_{ Q_{r'} (\nu')}  |f(u)| \, dv(u) \right)^p \nonumber \\ & \leq C \sum_{\nu \in r' \mathbb{Z}^{2n}} w(Q_{r'}(\nu))   \sum_{\nu' \in r'\mathbb{Z}^{2n} } e^{- \frac{p\alpha}{8} |\nu - \nu'|^2}  \left(\int_{ Q_{r'} (\nu')}  |f(u)| \, dv(u) \right)^p  \tag{3.2} \end{align}

\noindent
However, since $w \in \text{A}_{p, 3r'}$, Lemmas $3.3$ and $3.4$ give us that \begin{align} w(Q_{r'}(\nu))  \left(\int_{ Q_{r'} (\nu')}  |f| \, dv \right)^p & \leq C \frac{w(Q_{r'} (\nu))} {w(Q_{r'} (\nu'))} \int_{Q_{r'}(\nu')} |f|^p w \, dv \nonumber  \\ & \leq C ^{|\nu - \nu'| + 1}  \int_{Q_{r'}(\nu')} |f|^p w \, dv \tag{3.3}  \end{align}

\noindent Plugging $(3.3)$ into $(3.2)$ and switching the order of summation, we have that \begin{align} \sum_{\nu \in r' \mathbb{Z}^{2n}} w(Q_{r'}(\nu))  & \sum_{\nu' \in r'\mathbb{Z}^{2n} } e^{- \frac{p\alpha}{8} |\nu - \nu'|^2}  \left(\int_{ Q_{r'} (\nu')}  |f(u)| \, dv(u) \right)^p  \nonumber \\ & \leq \sum_{\nu \in r' \mathbb{Z}^{2n}}    \sum_{\nu' \in r'\mathbb{Z}^{2n} }  C ^{|\nu - \nu'| + 1} e^{- \frac{p\alpha }{8} |\nu - \nu'|^2}  \int_{Q_{r'}(\nu ')} |f|^p w \, dv \nonumber \\ & = \sum_{\nu' \in r' \mathbb{Z}^{2n}}    \int_{Q_{r'}(\nu')} |f|^p w \, dv \sum_{\nu \in r'\mathbb{Z}^{2n} }  C ^{|\nu - \nu'| + 1} e^{- \frac{p\alpha }{8} |\nu - \nu'|^2} \nonumber \\ & \leq C \int_{\mathbb{C}^n} |f|^p w \, dv \nonumber \end{align}
\medskip

That $(b) \Rightarrow (c)$ follows from a simple computation.

\medskip
  Let us now prove that $(c) \Rightarrow (a)$. The proof will involve a modification of the proof of the corresponding result in \cite{CF} for the Hilbert transform on the weighted space $L^p(\mathbb{R}, w \, dx)$. Fix some cube $Q$ with center $z_0$ and side length $ r_0$ where $r_0 > 0$ is a small number to be determined. If \begin{align} f(u) = w^{- \frac{1}{p - 1}} (u) e^{\frac{\alpha}{2} |u|^2} e^{- i \alpha \text{Im} ( z_0 \cdot u) } \chi_{Q} (u), \nonumber \end{align}   \noindent then \begin{align} |P_\alpha f(z)| = \left(\frac{\alpha}{\pi}\right)^n \left|\int_{Q } e^{\alpha (z \cdot u)} e^{- \frac{\alpha}{2} |u|^2 } e^{- i \alpha \text{Im} ( z_0 \cdot u) } w^{- \frac{1}{p - 1}} (u) \, dv(u) \right| \tag{3.4} \end{align}

\noindent  However, \begin{align} e^{\alpha (z \cdot u)} & = \left|e^{\alpha (z \cdot u)}\right|e^{i \alpha \text{Im}(z\cdot u)} \nonumber \\ & =  \left|e^{\alpha (z \cdot u)}\right|e^{i \alpha \text{Im} (z - z_0) \cdot (u - z_0)} e^{i \alpha \text{Im} (z_0 \cdot u)} e^{i \alpha \text{Im} (z - z_0)\cdot z_0} \tag{3.5} \end{align}

\noindent Plugging $(3.5)$ into $(3.4)$ gives \begin{align} |P_\alpha f (z)| = \left(\frac{\alpha}{\pi}\right)^n  e^{\frac{\alpha}{2} |z|^2} \left|\int_{Q } e^{-\frac{\alpha}{2} |z - u|^2} e^{i \alpha \text{Im} (z - z_0) \cdot (u - z_0)} w^{- \frac{1}{p - 1}} (u) \, dv(u) \right| \nonumber \end{align}

\noindent  Picking $r_0 > 0$ small enough, we get that $\left|  1 - e^{i \alpha \text{Im} (z- z_0)\cdot (u - z_0)} \right| \leq \frac{1}{2}  $ for all $z$ and $ u \in Q$, so writing $ e^{i \alpha \text{Im} (z- z_0)\cdot (u - z_0)} = 1 - \left(   1 - e^{i \alpha \text{Im} (z- z_0)\cdot (u - z_0)}\right)$ and using the triangle inequality, we get that \begin{align} |P_\alpha f (z)| & \geq \frac{1}{2} \left(\frac{\alpha}{\pi}\right)^n e^{ \frac{\alpha}{2} |z|^2} \chi_Q (z)  \int_{Q }  e^{- \frac{\alpha}{2} |z - u|^2 } w^{- \frac{1}{p - 1}} (u)  \, dv(u) \nonumber \\ &  \geq C e^{ \frac{\alpha}{2} |z|^2} \chi_Q (z) \int_{Q }  w^{- \frac{1}{p - 1}} \, dv. \tag{3.6}  \end{align}   The boundedness of $P_\alpha$ on $\mathcal{L}_\alpha ^p (w)$ applied to $(3.6)$ now gives us that \begin{align}
w(Q) \left( \int_Q w^{- \frac{1}{p - 1}} \, dv\right)^p \leq C \int_Q w^{- \frac{1}{p - 1}}  \, dv \nonumber \end{align} which proves $(a)$. \noindent Finally, the proof that $(b) \Rightarrow (d)$ is similar to the proof that $(c) \Rightarrow (a)$. \hfill $\square$

\bigskip

\bigskip

 \begin{remark} By Theorem $3.1$ we have that the classes A${}_{p, r}$ coincide for each $r > 0$.  Thus, to emphasize this fact, we will denote the space A${}_{p, r}$ by A${}_p ^\text{restricted}$.\\

\noindent Also, since A${}_p ^\text{restricted}$ is obviously independent of $\alpha$, we have that $P_{\alpha_0}$ is bounded on $\mathcal{L}_{\alpha_0} ^p (w)$ for \textit{some} $\alpha_0 > 0$ if and only if $P_{\alpha}$ is bounded on $\mathcal{L}_{\alpha} ^p (w)$ for \textit{all} $\alpha > 0$. \end{remark}

\begin{remark} The definition of A${}_p ^\text{restricted}$ can obviously be defined on $\mathbb{R}^n$ for all $n \in \mathbb{N}$.  Moreover, we also have that A${}_p ^\text{restricted}$ is the same as the class of weights $w$ on $\mathbb{R}^n$ where $H_\alpha$ is bounded on $L^p(\mathbb{R}^n, w \, dv)$ for any (or all) $\alpha > 0$ . \end{remark}

We will now connect the class A${}_p ^\text{restricted}$ with an appropriate BMO type space.  For $1 \leq p < \infty$,  let $\text{BMO}_r ^p $ be the space of functions $f$ on $\mathbb{R}^n$ such that

\begin{eqnarray}
\underset{z \in  {\mathbb{R}^n}}{\sup}  \frac{1}{v(B(z, r))} \int_{B(z, r)} \left| f  - f_{B(z, r)}  \right|^p dv < \infty \nonumber \end{eqnarray} where $B(z, r)$ is a Euclidean ball of center $z \in \mathbb{R}^n$ and radius $r > 0$.  It is easy to show that as a vector space, $\text{BMO}_r ^p $  is independent of $r > 0$, and so we will write $\text{BMO} ^p $ instead of $\text{BMO}_r ^p $.  It is also not hard to show that $\text{BMO} ^p = \text{BA}^p + \text{BO}$ where  $f \in \text{BO}$ if

\begin{eqnarray} \underset{z \in \mathbb{R}^n}{\sup} \omega_r (f)(z) < \infty  \nonumber
\end{eqnarray}

\noindent for some (or any) fixed $r > 0$ where $\omega_r (f)(z) = \underset{ w \in B(z, r)}{\sup} |f(z) - f(w)|$ and $f \in \text{BA} ^p $ if

\begin{eqnarray} \underset{z \in \mathbb{R}^n}{\sup} \frac{1}{v(B(z, r))} \int_{B(z, r)} |f|^p dv  < \infty. \nonumber
\end{eqnarray}

\noindent for some (or any) fixed $r > 0$.  Note that both of these conditions are independent of $r > 0$. Also note that this decomposition is explicit.  In particular, if $f \in \text{BMO} ^p$, then one can verify that $f_{B(\cdot, r)} \in \text{BO}$ and $f - f_{B(\cdot, r)} \in \text{BA} ^p$ for any $r > 0.$     Unlike in the classical BMO setting, note that the John-Nirenberg theorem is not true for the spaces $\text{BMO} ^p$ since the space $\text{BA}^p$ depends on $p$.  For more details about $\text{BMO} ^p$ (and for proofs of the above assertions) see \cite{CIL}, p. 3023.

However, similar to the classical BMO setting, one can show that $\log w \in \text{BMO}^1$ if $w \in \text{A}_p ^\text{restricted}$, where the proof is identical to the proof in the classical A${}_p$ - BMO setting (see \cite{D}, p. 151.) It is also well known that in the classical setting, $e^{\delta f} \in \text{A}_p$ for $f \in \text{BMO}$ with $\delta > 0$ small enough (again see \cite{D} p. 151).  It would be interesting to know if any similar relationship between $\text{BMO}^p$ and $\text{A}_p ^\text{restricted}$ exists.

     \medskip
      With Theorem $3.1$ proved, we can now characterize invertible Toeplitz products on the Fock space. In fact, we will characterize bounded Toeplitz products $T_f T_{\overline{g}}$ when $f, g$ are entire and as a consequence, as mentioned before, we will show that Sarason's conjecture is trivially true for the Fock space. First, for a function $f$ on $\mathbb{C}^n$,  let $\tilde{f}^{(\alpha)}$ be the Berezin transform of $f$ given by \begin{align} \tilde{f}^{(\alpha)} (z) =  \left(\frac{\alpha}{\pi}\right)^n \int_{\mathbb{C}^n} e^{- \alpha |z - u|^2} f(u) \, dv(u). \nonumber \end{align}   Note that $\tilde{f}^{(\alpha)}$ can obviously be defined for a function $f$ on $\mathbb{R}^n$. Moreover, for a function $f$ on $\mathbb{R}^n$, note that $\tilde{f}^{(\alpha)}$ is just the convolution of $f$ with the heat kernel $H(x, t) = \frac{1}{(4\pi t )^{\frac{n}{2}}} \exp \{-|x|^2/{4t}\}$ at time $t = \frac{1}{4\alpha}$.

       \begin{theorem} Let $p > 1$ and let $q$ be the conjugate exponent of $p$.  If $f \in F_\alpha ^p $ and $g \in F_\alpha ^q $, then the following are equivalent:
\begin{enumerate}
\item[(a)] $T_f  T_{\overline{g}}  $ is bounded on $F _{\alpha} ^p $.
\item[(b)] $f$ and $g$ satisfy \begin{align} \underset{z \in \mathbb{C}^n}{\sup}  \left(\widetilde{|f|^p}^{\left(\frac{\alpha p}{2}\right)} (z) \right)^\frac{1}{p} \left(\widetilde{|g|^{q}}^{\left(\frac{\alpha q}{2}\right)} (z) \right)^\frac{1}{q} < \infty.  \nonumber \end{align}
\end{enumerate}

\noindent Furthermore, if either of these are true then $fg$ is identically constant, and if both $f$ and $g$ never vanish on $\mathbb{C}^n$, then $f = e^{P}$ for a linear polynomial $P$ on $\mathbb{C}^n$ and $g = c e^{-P}$ for some $c \in \mathbb{C}$ (in which case $T_f  T_{\overline{f^{-1}}}$  is invertible on $F_\alpha ^p$). \end{theorem}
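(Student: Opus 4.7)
The plan is to prove $(a)\Leftrightarrow(b)$ by exploiting the Fock--space identity $\|k_z\|_p = 1$ for every $p>1$ together with the elementary formula $|k_z(u)|^p e^{-p\alpha|u|^2/2} = e^{-p\alpha|u-z|^2/2}$, which gives the fundamental computation $\|fk_z\|_p^p = \widetilde{|f|^p}^{(\alpha p/2)}(z)$. For $(a)\Rightarrow(b)$ I would observe that, since $g$ is entire, $T_{\overline g}k_z = \overline{g(z)}k_z$ and $T_f(fk_z)=fk_z$, giving $T_f T_{\overline g}k_z = \overline{g(z)}fk_z$. Taking $p$-norms and repeating for the adjoint $(T_f T_{\overline g})^{*} = T_g T_{\overline f}$ on $F_\alpha^q$ yields
\[
|g(z)|\bigl(\widetilde{|f|^p}^{(\alpha p/2)}(z)\bigr)^{\frac{1}{p}} \leq \|T_f T_{\overline g}\|, \qquad |f(z)|\bigl(\widetilde{|g|^q}^{(\alpha q/2)}(z)\bigr)^{\frac{1}{q}} \leq \|T_f T_{\overline g}\|.
\]
Pairing $T_f T_{\overline g}k_z$ against $k_z$ in the $F_\alpha^p$--$F_\alpha^q$ duality produces $\langle T_f T_{\overline g}k_z,k_z\rangle = f(z)\overline{g(z)}$, so $|fg|$ is bounded on $\mathbb{C}^n$; since $fg$ is entire, Liouville forces $fg\equiv c$. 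The case $c=0$ renders (b) trivial, and for $c \neq 0$ multiplying the two displayed estimates and using $|fg|=|c|$ gives (b). For the reverse direction $(b)\Rightarrow(a)$, sub-mean-value for the plurisubharmonic functions $|f|^p$ and $|g|^q$ against the rotation-invariant Gaussian kernel gives $|f(z)| \leq (\widetilde{|f|^p}^{(\alpha p/2)}(z))^{1/p}$ and analogously for $g$, so (b) again forces $|fg|$ bounded and $fg\equiv c$.

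Now with $c \neq 0$ and $|g|^q = |c|^q|f|^{-p/(p-1)}$, (b) rewrites as a Berezin-type $\text{A}_p$ inequality for $w=|f|^p$. Restricting each Gaussian integral to a fixed cube $Q_r(z)$ (where the Gaussian weight is bounded below by a constant depending only on $r,n,p,\alpha$) converts this into $|f|^p \in \text{A}_p^{\text{restricted}}$, so Theorem $3.1$ yields that $P_\alpha$ is bounded on $\mathcal{L}_\alpha^p(|f|^p)$. Writing $T_f T_{\overline g}h = f\cdot P_\alpha(\overline g h)$ and applying this weighted boundedness then gives
\[
\|T_f T_{\overline g}h\|_{F_\alpha^p}^p = \|P_\alpha(\overline g h)\|_{\mathcal{L}_\alpha^p(|f|^p)}^p \leq C\|\overline g h\|_{\mathcal{L}_\alpha^p(|f|^p)}^p = C|c|^p\|h\|_{F_\alpha^p}^p,
\]
where the final equality collapses $|gh|^p|f|^p = |c|^p|h|^p$ using $|fg|=|c|$, completing $(b)\Rightarrow(a)$.

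For the structural claim, suppose $f$ and $g$ never vanish; then $f = e^q$ for some entire $q$ and $g = ce^{-q}$. The standard pointwise bound $|h(z)|e^{-\alpha|z|^2/2} \leq C_p\|h\|_{F_\alpha^p}$ applied to $f$ and $g$ gives $|\text{Re}(q(z))| \leq \frac{\alpha}{2}|z|^2 + O(1)$, and a slicewise Borel--Carath\'eodory argument along complex lines forces $q$ to be a polynomial of degree at most $2$. The main obstacle is ruling out degree exactly $2$: for such a $q$, the Taylor expansion of $\text{Re}(q)$ about a point $z_0$ has a linear part whose slope grows like $|z_0|$, so the oscillation of $\text{Re}(q)$ on a cube $Q_r(z_0)$ of fixed side is of order $r|z_0|$, and a direct computation shows that
\[
\left(\frac{1}{|Q_r|}\int_{Q_r(z_0)} e^{p\text{Re}(q)}\,dv\right)\left(\frac{1}{|Q_r|}\int_{Q_r(z_0)} e^{-p\text{Re}(q)/(p-1)}\,dv\right)^{p-1}
\]
blows up exponentially as $|z_0|\to\infty$, contradicting $|f|^p \in \text{A}_p^{\text{restricted}}$ established above. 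Hence $q$ must be linear. Invertibility of $T_f T_{\overline{f^{-1}}}$ then follows from $T_\phi T_f T_{\overline{f^{-1}}} = I = T_f T_{\overline{f^{-1}}} T_\phi$ with $\phi = (f\overline{f^{-1}})^{-1}$ bounded (since $|f\overline{f^{-1}}|\equiv 1$), using the same analytic-symbol Toeplitz identities as in Theorem $2.1$.
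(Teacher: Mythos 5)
Your proof of the equivalence $(a)\Leftrightarrow(b)$ is correct and is essentially the paper's argument: both rest on the identities $T_fT_{\overline g}k_z=\overline{g(z)}fk_z$, $\|k_z\|_{F_\alpha^p}=1$, $\|fk_z\|_{F_\alpha^p}^p=\widetilde{|f|^p}^{(\alpha p/2)}(z)$, the adjoint $T_gT_{\overline f}$ on $F_\alpha^q$, Liouville to get $fg\equiv c$, and then (for $c\neq 0$) the observation that restricting the Gaussian averages to fixed cubes turns condition $(b)$ into $|f|^p\in\mathrm{A}_p^{\text{restricted}}$, so that Theorem $3.1$ and the factorization $T_fT_{\overline g}h=fP_\alpha(\overline g h)$ with $|fg|=|c|$ give boundedness. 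Where you genuinely diverge is the structural claim that $f=e^{P}$ with $P$ linear. The paper deduces from $|f|^p\in\mathrm{A}_p^{\text{restricted}}$ that $\log|f|\in\mathrm{BMO}^1$, uses subharmonicity and the bound $|g(z)|\leq A+B|z|$ for $g\in\mathrm{BO}$ to trap $|f|$ and $|f|^{-1}$ between exponentials of linear growth, and then invokes a one-variable Weierstrass factorization argument (deferred to \cite{CPZ}). You instead write $f=e^{q}$, use the Fock-space pointwise estimate on $f\in F_\alpha^p$ and $g=ce^{-q}\in F_\alpha^q$ to get $|\mathrm{Re}\,q(z)|\leq\frac{\alpha}{2}|z|^2+O(1)$, apply Borel--Carath\'eodory on complex lines to conclude $\deg q\leq 2$, and exclude degree two by showing that an unbounded gradient of $\mathrm{Re}\,q$ forces the fixed-cube $\mathrm{A}_{p,r}$ product of $e^{p\,\mathrm{Re}\,q}$ to blow up (the exponential-of-linear computation $\frac{e^{ar}}{(ar)^p}\to\infty$), contradicting $|f|^p\in\mathrm{A}_p^{\text{restricted}}$. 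Your route is more self-contained (no appeal to \cite{CPZ} or to the $\mathrm{BMO}^p=\mathrm{BA}^p+\mathrm{BO}$ decomposition) and exploits directly the quadratic growth built into Fock-space membership, at the price of carrying out the (elementary but omitted) uniform-cube estimate for the quadratic case; the paper's route is shorter given the BMO machinery already developed in Section $3$ and avoids any degree bookkeeping, but outsources the final factorization step.
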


\begin{proof}   We first prove that $(a) \Longrightarrow (b)$.  Assume that $T_f  T_{\overline{g}} $ is bounded on $F_{\alpha} ^p $.  Since  $\text{span} \{k_\eta : \eta \in \mathbb{C}^n\}$ is dense in $F_\alpha ^p$ and $F_\alpha ^q$ (see \cite{JPR}), we have that \begin{align} T_f  T_{\overline{g}}  k_z   =  \overline{g(z)} f k_z. \nonumber \end{align} Moreover, it is easy to see that $|f(z)| \leq    C \left(\widetilde{|f|^p}^{\left(\frac{\alpha p}{2}\right)} (z) \right)^\frac{1}{p} $ for some $C > 0$ independent of $f$ and $z$, so that \begin{align} \underset{z \in \mathbb{C}^n}{\sup}  \, |f(z) g(z)| & \leq  \underset{z \in \mathbb{C}^n}{\sup}  \, |g(z)| \left(\widetilde{|f|^p}^{\left(\frac{\alpha p}{2}\right)} (z) \right)^\frac{1}{p} \nonumber \\ & = \underset{z \in \mathbb{C}^n}{\sup}  \, \|T_f  T_{\overline{g}}  k_z  \|_{F_\alpha ^p} \tag{3.6}  \end{align}  which implies that $fg$ is identically a constant since $\|k_z  \|_{F_\alpha^p} = 1$. Also, it is easy to see that either $f \equiv 0$ or $g \equiv 0$ if either $f$ or $g$ vanishes anywhere on $\mathbb{C}^n$.  Thus, assume that both $f$ and $g$ never vanish on $\mathbb{C}^n$.
\medskip
\noindent  Since $(T_f  T_{\overline{g}}  )^*  =  T_{g}  T_{\overline{f}} $ is bounded on $\left(F_\alpha ^p\right) ^* = F_\alpha ^q$ (again see \cite{JPR}), we also have that \begin{align} \underset{z \in \mathbb{C}^n}{\sup} \, |f(z)| \left(\widetilde{|g|^{q}}^{\left(\frac{\alpha q}{2}\right)} (z) \right)^\frac{1}{q} & = \underset{z \in \mathbb{C}^n}{\sup}  \, \|T_{g}  T_{\overline{f}}  k_z  \|_{F_\alpha ^q} \nonumber \tag{3.7} \end{align}

\noindent Combining $(3.6)$  and $(3.7)$ now gives us that \begin{align} \underset{z \in \mathbb{C}^n}{\sup}  \left(\widetilde{|f|^p}^{\left(\frac{\alpha p}{2}\right)} (z) \right)^\frac{1}{p} \left(\widetilde{|g|^{q}}^{\left(\frac{\alpha q}{2}\right)} (z) \right)^\frac{1}{q} < \infty.  \nonumber \end{align}

\medskip
Now we prove that $(b) \Longrightarrow (a)$. If $(b)$ is true, then again $fg$ is identically constant, and if either $f$ or $g$ vanish anywhere on $\mathbb{C}^n$, then one of these functions is identically zero.  Moreover, if $(b)$ is true and both $f$ and $g$ never vanish on $\mathbb{C}^n$, then it is easy to see that $|f|^p \in \text{A}_p ^\text{restricted}$, which means that $T_f T_{\overline{f^{-1}}}$ (and also $T_f T_{\overline{g}}$) is bounded on $F_\alpha ^p$.

  \medskip
   To finish the proof, we show that if $f$ is an entire function with $|f|^p \in \text{A}_p ^{\text{restricted}}$ and $f^{-1}$ is entire, then there exists costants $C_1, C_2, C_3, C_4$ where \begin{equation*} C_1 e^{C_2 |z|} \leq |f(z)| \leq C_3 e^{C_4 |z|} \end{equation*} for any $z \in \mathbb{C}^n$. A simple argument using the Weierstrass factorization theorem in one dimension then shows that $f = e^{P}$ for a linear polynomial $P$ (see \cite{CPZ} for more details.)   To see this, first note that by essentially the definition of BO, we have \begin{equation*} |g(z)| \leq A + B|z| \end{equation*} for some constants $A, B \geq 0$ if $g \in \text{BO}$.  Now since $\log |f| \in \text{BMO}^1$ and is subharmonic, we have that \begin{equation*} \log |f| (z) \leq (\log |f|)_{B(z, 1)} \leq C_1 + C_2 |z| \end{equation*} since $(\log |f|)_{B(z, 1)}  \in \text{BO}.$  Applying the same reasoning to $|f|^{- \frac{p}{p - 1}}$ completes the proof.
\end{proof}

\medskip
\begin{remark}  Using ideas from the proof of Theorem $5.1$ in Section $5$, it is not difficult to see that the following are equivalent for any measurable $f$ on $\mathbb{C}^n:$  \begin{enumerate}
\item[(a)] $ \underset{z \in \mathbb{C}^n}{\sup}  \left(\widetilde{|f|^p}^{(\alpha)} (z) \right)^\frac{1}{p} \left(\widetilde{|f|^{-q}}^{(\beta)} (z) \right)^\frac{1}{q} < C_{\alpha, \beta}$  for some $\alpha, \beta > 0$.
\item[(b)]  $\underset{z \in \mathbb{C}^n}{\sup}  \left(\widetilde{|f|^p}^{(\alpha)} (z) \right)^\frac{1}{p} \left(\widetilde{|f|^{-q}}^{(\beta)} (z) \right)^\frac{1}{q} < C_{\alpha, \beta} $ for all $\alpha, \beta > 0$.
\item[(c)] $w = |f|^p$ belongs to A${}_p ^{\text{restricted}}$.
\end{enumerate}
\end{remark}

\noindent Finally note that the following is a direct consequence of Theorem $3.1$:
\begin{corollary} Let $ f$ be any measurable function on $\mathbb{C}^n$ where $f \not\equiv 0$ a.e. and where $w = |f|^p$ in is A${}_p ^\text{restricted}$.  Then $T_f T_{\overline{f^{-1}}}$ is bounded on $\mathcal{L}_\alpha ^p$ (and in particular, bounded on $F_\alpha ^p$) for any $\alpha > 0 $. Also, the same statement holds for $T_f T_{f^{-1}}$. \end{corollary}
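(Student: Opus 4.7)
The plan is to deduce Corollary 3.7 directly from Theorem 3.1 by factoring $T_f T_{\overline{f^{-1}}} = P_\alpha M_f P_\alpha M_{\overline{f^{-1}}}$ and tracing this factorization through a chain of weighted $\mathcal{L}$-spaces. I would first note that since $w = |f|^p \in \text{A}_p^{\text{restricted}}$, Lemma 3.3 (as already observed in Remark 3.6) forces $w$ to be either a.e. zero or a.e. positive, so combined with the hypothesis $f \not\equiv 0$ we obtain $w > 0$ a.e.; this makes $f^{-1}$ a well-defined measurable function and renders the Toeplitz symbols meaningful.

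Next, I would verify the key point that the two multiplication operators exactly interchange the constant-weight and the $|f|^p$-weighted spaces. A direct computation from the definition of $\mathcal{L}_\alpha^p(w)$ shows that $M_{\overline{f^{-1}}}$ maps $\mathcal{L}_\alpha^p$ isometrically (up to the normalizing constant) onto $\mathcal{L}_\alpha^p(|f|^p)$, since inside the weighted integral the factor $|f^{-1}|^p \cdot |f|^p$ collapses to $1$; symmetrically, $M_f$ carries $\mathcal{L}_\alpha^p(|f|^p)$ boundedly back to $\mathcal{L}_\alpha^p$. By the hypothesis and Theorem 3.1 (specifically the implication $(a) \Rightarrow (c)$), the Fock projection $P_\alpha$ is bounded on $\mathcal{L}_\alpha^p(|f|^p)$; it is also trivially bounded on $\mathcal{L}_\alpha^p$ itself, since the constant weight belongs to $\text{A}_p^{\text{restricted}}$.

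I would then finish by composing these four bounded maps along the chain
\begin{equation*}
\mathcal{L}_\alpha^p \xrightarrow{M_{\overline{f^{-1}}}} \mathcal{L}_\alpha^p(|f|^p) \xrightarrow{P_\alpha} \mathcal{L}_\alpha^p(|f|^p) \xrightarrow{M_f} \mathcal{L}_\alpha^p \xrightarrow{P_\alpha} \mathcal{L}_\alpha^p,
\end{equation*}
which gives the boundedness of $T_f T_{\overline{f^{-1}}}$ on $\mathcal{L}_\alpha^p$ and hence on its closed subspace $F_\alpha^p$. The statement for $T_f T_{f^{-1}}$ follows verbatim from the same chain, since every norm estimate appearing in it depends only on the modulus $|f^{-1}| = |\overline{f^{-1}}|$, and $M_{f^{-1}}$ carries $\mathcal{L}_\alpha^p$ onto $\mathcal{L}_\alpha^p(|f|^p)$ by exactly the same computation.

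I do not anticipate a substantive obstacle beyond Theorem 3.1: the corollary is essentially a repackaging of the weighted norm inequality for $P_\alpha$ via the multiplicative structure of the $\mathcal{L}_\alpha^p(w)$-spaces, and the only noncomputational point to verify is the invocation of Lemma 3.3, which is what makes $f^{-1}$ well-defined a priori and justifies writing the chain at all.
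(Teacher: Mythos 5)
Your proposal is correct and follows essentially the same route as the paper, which obtains the corollary from Theorem 3.1 (the weighted norm inequality $(a)\Rightarrow(c)$ for $P_\alpha$ on $\mathcal{L}_\alpha^p(|f|^p)$), Lemma 3.3 (to get $w>0$ a.e.\ so that $f^{-1}$ is defined), and the factorization of $T_f T_{\overline{f^{-1}}}$ through the weighted space; you have merely written out explicitly the chain of multiplications and projections that the paper leaves implicit.
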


\section{Classes of Weights}

In this section, we will analyze  the classes of weights relevant to the results of the previous sections.

\medskip
\noindent First, for $p > 1$, define the invariant A${}_p$ class (which will be denoted by A${}_p ^\text{inv.}$) to be the class of all weights $w$ on $\partial \mathbb{B}_n$ such that \begin{align} \underset{ z \in \mathbb{B}_n}{\sup} \ \{\widehat{w}(z) \}   \{\widehat{w^{-\frac{1}{{p - 1}}}} (z)\}^{p - 1}  < \infty. \nonumber \end{align} Note that by definition,  A${}_p ^\text{inv.}$  is M\"{o}bius invariant. For a definition and discussion of A${}_\infty ^\text{inv.}$ weights on $\partial\mathbb{D}$, see \cite{TVZ} and \cite{W}.

\medskip
  For $p = 2$, it is not difficult to show that A${}_2$ and A${}_2 ^\text{inv.}$ coincide.  However, for general $p > 1$, A${}_p ^\text{inv.}$ is strictly larger than  A${}_p$  (see \cite{ TVZ, W} for examples.) Also, for a discussion of A${}_p ^\text{inv.}$ weights on $\mathbb{R}$ for $ 1 < p < \infty$, see \cite{H}.

\medskip
 With this in mind, one can similarly define B${}_{p, \gamma} ^\text{inv.} $ to be the class of all weights $w$ on $\mathbb{B}_n$ where  \begin{align} \underset{ z \in \mathbb{B}_n}{\sup} \ \{B_\gamma (w) (z) \}   \{B_\gamma(w^{- \frac{1}{{p - 1}}}) (z)\}^{p - 1}  < \infty. \nonumber \end{align} Note that B${}_{p, \gamma} ^\text{inv.}$ is also M\"{o}bius invariant

\medskip
  \noindent We can also describe B${}_{p, \gamma}  $ in terms of the Berezin transform.  In particular, we have: \begin{proposition} A weight $w$ on $\mathbb{B}_n$ is in B${}_{p, \gamma}$ if and only if \begin{align} \|w \|_{\text{B}_{p, \gamma} ^\text{Ber.}} =  \underset{z \in \mathbb{B}_n}{\sup}  \ \{B_\gamma (w |k_z ^{p - 2}| )  (z) \}  \{B_\gamma (w^{- \frac{1}{p - 1}} |k_z ^{q - 2}|) (z) \}^{p - 1}  < \infty.  \nonumber\end{align} In particular, there exists a constant $ C$ independent of $w$ where \begin{align}  \frac{1}{C} \|w\|_{\text{B}_{p, \gamma} }  \leq  \|w\|_{\text{B}_{p, \gamma} ^\text{Ber.}}   \leq C \|w\|_{\text{B}_{p, \gamma} } ^{\max\{p + 1, q +1 \}}. \nonumber \end{align} \end{proposition}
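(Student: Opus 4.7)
The plan is to establish the two inequalities separately: the lower bound $\|w\|_{\text{B}_{p,\gamma}} \leq C\|w\|_{\text{B}_{p,\gamma}^{\text{Ber.}}}$ via a test-point argument, and the upper bound $\|w\|_{\text{B}_{p,\gamma}^{\text{Ber.}}} \leq C\|w\|_{\text{B}_{p,\gamma}}^{\max\{p+1,q+1\}}$ via a dyadic decomposition of $\mathbb{B}_n$ adapted to each test point $z$.

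For the lower bound, given any Carleson-type ball $D = D(z_0, R)$ intersecting $\partial\mathbb{B}_n$, I would choose the canonical associated point $z \in \mathbb{B}_n$ (essentially $(1-R)z_0/|z_0|$) so that standard kernel estimates yield $|k_z(u)|^2 \asymp v_\gamma(D)^{-1}$ uniformly for $u \in D$. Restricting the integral $\int w|k_z|^p\,dv_\gamma$ to $D$ then gives
\[
B_\gamma(w|k_z|^{p-2})(z) \gtrsim v_\gamma(D)^{-p/2}\int_D w\, dv_\gamma,
\]
and an entirely parallel bound holds for $B_\gamma(w^{-1/(p-1)}|k_z|^{q-2})(z)$ with $q$ in place of $p$. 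Multiplying the two lower bounds and using $p/2+(p-1)q/2=p$ (a consequence of $(p-1)q=p$) recovers exactly the $\text{B}_{p,\gamma}$ quantity at $D$.

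For the upper bound, fix $z$, let $h = 1-|z|^2$ and $s = n+1+\gamma$, and consider the nested Carleson-type balls $D_k = \{u : |1-\bar z \cdot u| < 2^k h\}$ for $0\leq k \leq k_*$, where $D_{k_*} = \mathbb{B}_n$. On $D_k\setminus D_{k-1}$ one has $|k_z(u)|^p \asymp 2^{-ksp}h^{-sp/2}$ and $v_\gamma(D_k)\asymp (2^k h)^s$, so a direct computation gives
\[
B_\gamma(w|k_z|^{p-2})(z) \lesssim h^{s(1-p/2)}\sum_{k=0}^{k_*} 2^{-ks(p-1)} A_k
\]
with $A_k = v_\gamma(D_k)^{-1}\int_{D_k}w\,dv_\gamma$, together with the analogous estimate for the $w^{-1/(p-1)}$ term using $B_k$ and $q$. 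The key algebraic identity $s(1-p/2) + s(p-1)(1-q/2) = 0$ causes the $h$-factors to cancel when the two estimates are multiplied together.

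The main obstacle will then be bounding the resulting product
\[
\Bigl(\sum_{k=0}^{k_*} 2^{-ks(p-1)} A_k\Bigr)\Bigl(\sum_{k=0}^{k_*} 2^{-ks(q-1)} B_k\Bigr)^{p-1}
\]
by a power of $M := \|w\|_{\text{B}_{p,\gamma}}$. While the diagonal estimate $A_k B_k^{p-1}\leq M$ is immediate from the $\text{B}_{p,\gamma}$ condition, the off-diagonal terms $A_k B_{k'}^{p-1}$ for $k\neq k'$ are the source of difficulty. I would handle these by exploiting the dyadic nesting: since $D_k\subset D_{k+1}$ and doubling gives $v_\gamma(D_{k+1})\leq Cv_\gamma(D_k)$, we have $A_k \leq CA_{k+1}$ and $B_k \leq CB_{k+1}$, so a cross-level comparison costs only a factor $C^{|k-k'|}$ that can be absorbed into the geometric decay $2^{-ks(p-1)}$, $2^{-ks(q-1)}$. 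Carefully tracking the powers of $M$ needed to dominate the worst cross terms then produces the stated exponent $\max\{p+1,q+1\}$.
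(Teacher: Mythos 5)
Your overall architecture (easy direction by testing at the point associated to a ball; hard direction by decomposing $\mathbb{B}_n$ into the annuli $D_k\setminus D_{k-1}$ around $z$, using the kernel decay and the algebraic cancellation of the $h$-powers) is exactly the scheme the paper uses, and your lower-bound argument and the reduction to bounding $\bigl(\sum_k 2^{-ks(p-1)}A_k\bigr)\bigl(\sum_{k'} 2^{-k's(q-1)}B_{k'}\bigr)^{p-1}$ are fine. The gap is in your treatment of the off-diagonal terms. Doubling gives $A_k\leq C^{k'-k}A_{k'}$ for $k\leq k'$ only with $C\approx 2^{s}$ (the ratio $v_\gamma(D_{k'})/v_\gamma(D_k)$), and this loss cancels the kernel decay exactly: for $k\leq k'$ the term $2^{-ks(p-1)}2^{-k's}A_kB_{k'}^{p-1}$ is bounded this way only by (const)$^{k'-k}\,2^{-ksp}M$, with \emph{no} decay in $k'-k$ (for $k'<k$ the same computation gives $2^{-k'sp}$ times a growing factor). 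Since the number of scales is $k_*\approx\log\frac{1}{1-|z|^2}$, summing yields at best a bound of order $M\log\frac{1}{1-|z|^2}$, which is not uniform in $z$; no amount of ``tracking powers of $M$'' fixes this, because doubling simply contains no information about how the $w$-mass is distributed across scales.

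What is needed — and what the paper uses (its proof of Proposition $4.1$ is modeled on the proof of Proposition $4.4$ at the end of Section $5$, together with the kernel estimate $\sup_{u\notin B_k}|k_z(u)|\lesssim M^{-\frac{k}{2}(n+1+\gamma)}v_\gamma(B_{k+1})^{-1/2}$) — is an A$_\infty$-type gain extracted from the B$_{p,\gamma}$ condition itself, in the spirit of Lemma $5.6$: if $E\subseteq Q$ with $v_\gamma(E)\leq\frac12 v_\gamma(Q)$ then $w(E)\leq\delta\,w(Q)$ with $\delta=1-(2^p\|w\|_{\text{B}_{p,\gamma}})^{-1}<1$, and the analogous statement for $w^{-\frac{1}{p-1}}\in \text{B}_{q,\gamma}$ with $\delta'$ depending on $\|w\|_{\text{B}_{p,\gamma}}^{q-1}$. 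Iterating along the nested chain (choosing the ratio of consecutive radii large enough that consecutive balls halve in measure) gives $w(D_k)\leq\delta^{\,c(k'-k)}w(D_{k'})$ for $k\leq k'$, i.e.\ a strict geometric decay in $|k-k'|$ that survives after the kernel and volume factors cancel. Summing the resulting series $\sum_j\delta^{j/p}\approx 2^p p\,\|w\|_{\text{B}_{p,\gamma}}$ (and $\sum_j(\delta')^{j/q}\approx\|w\|_{\text{B}_{p,\gamma}}^{q/p}$ for the other ordering) is precisely what produces the exponent $\max\{p+1,q+1\}$ in the statement; without this lemma your argument cannot produce any finite power.
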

 \noindent Note that this proposition tells us that B${}_{p, \gamma} ^\text{inv.} = {\text{B}}{}_{p, \gamma} $ when $p = 2$.

\medskip
If we define $w_\zeta (z) = (1 - |z|^2)^\zeta$ for $\zeta \in \mathbb{R}$, then a messy but elementary application of the Rudin-Forelli estimates (see \cite{Z2}) gives us the following two propositions:

\begin{proposition} $w_\zeta \in \text{B}{}_{p, \gamma}$ if and only if
$ -1 - \gamma < \zeta < (1 + \gamma)(p - 1).$
\end{proposition}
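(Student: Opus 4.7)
The plan is to reduce the $B_{p,\gamma}$ computation for the radial weight $w_\zeta$ to integral estimates over Carleson tents, and then apply the classical Rudin--Forelli estimates.

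\medskip

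First, I would observe that from the very definition of the pseudo-metric $d(z,u) = ||z|-|u|| + |1 - \tfrac{z}{|z|}\cdot\tfrac{u}{|u|}|$, any pseudo-metric ball $D = D(z_0, R)$ that intersects $\partial\mathbb{B}_n$ is, up to constants depending only on $n$ and $R$, comparable to a non-isotropic Carleson tent $T_{\xi, h} = \{u \in \mathbb{B}_n : |1 - \langle u, \xi\rangle | < h\}$ for some $\xi \in \partial\mathbb{B}_n$ and some $h \in (0,2)$. Since the supremum in the definition of $B_{p,\gamma}$ is taken over all such balls that touch the boundary, it suffices to verify the condition on the tents $T_{\xi, h}$.

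\medskip

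Second, the Rudin--Forelli estimates (see \cite{Z2}) yield
\[ \int_{T_{\xi, h}} (1-|u|^2)^\beta \, dv(u) \asymp h^{n+1+\beta} \quad \text{when } \beta > -1,\]
and the integral is infinite when $\beta \leq -1$, since the integrand fails to be integrable at the boundary point $\xi$. Because $dv_\gamma = c_\gamma(1-|u|^2)^\gamma dv$, applying this with $\beta = \gamma + \zeta$ (and $\beta = \gamma - \zeta/(p-1)$) gives, when $\zeta > -1-\gamma$,
\[ \int_{T_{\xi, h}} w_\zeta \, dv_\gamma \asymp h^{n+1+\gamma+\zeta}, \qquad v_\gamma(T_{\xi, h}) \asymp h^{n+1+\gamma},\]
while the first integral diverges for $\zeta \leq -1-\gamma$.

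\medskip

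Third, I would put the two pieces together. When $-1-\gamma < \zeta < (1+\gamma)(p-1)$, both $\zeta > -1-\gamma$ and $-\zeta/(p-1) > -1-\gamma$ hold, so
\[ \left(\frac{1}{v_\gamma(T_{\xi,h})} \int_{T_{\xi,h}} w_\zeta \, dv_\gamma\right) \left(\frac{1}{v_\gamma(T_{\xi, h})} \int_{T_{\xi, h}} w_\zeta^{-1/(p-1)} \, dv_\gamma\right)^{p-1} \asymp h^\zeta \cdot h^{-\zeta} = 1,\]
uniformly in $\xi$ and $h$, which shows $w_\zeta \in B_{p,\gamma}$. Conversely, if $\zeta \leq -1-\gamma$ then $\int_{T_{\xi, h}} w_\zeta \, dv_\gamma = \infty$ for every boundary-touching tent, and if $\zeta \geq (1+\gamma)(p-1)$ then $\int_{T_{\xi, h}} w_\zeta^{-1/(p-1)} \, dv_\gamma = \infty$; in either case the $B_{p,\gamma}$ condition fails trivially.

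\medskip

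The main (routine but fiddly) obstacle is the reduction in the first step: one must carefully track constants to show that generic balls $D(z_0, R)$ touching $\partial\mathbb{B}_n$ are genuinely comparable to tents $T_{\xi, h}$ for appropriate $\xi$ and $h$. This is purely geometric, requiring only that one parameterize $z_0 = (1-t)\xi_0$ with $\xi_0 \in \partial\mathbb{B}_n$ and observe how $t$ and $R$ together determine the relevant scale $h$; the remaining bookkeeping with the Rudin--Forelli estimates is elementary.
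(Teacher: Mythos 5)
Your argument is correct; the paper itself offers no proof of this proposition, asserting only that it follows from ``a messy but elementary application of the Rudin--Forelli estimates,'' and your tent-based computation (tent averages $\asymp h^{\zeta}$ and $h^{-\zeta/(p-1)\cdot(p-1)}$ in the admissible range, divergence outside it) is precisely that elementary application. The one step you flag as fiddly --- comparability of boundary-touching pseudo-metric balls with tents, including the lower volume bound $v_\gamma\bigl(D(z,R)\bigr) \approx R^{\,n+1+\gamma}$ --- is exactly Lemma 2 of \cite{Be}, which the paper already invokes in Section 5, so no genuine gap remains.
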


\begin{proposition} $w_\zeta \in \text{B}{}_{p, \gamma} ^{\text{inv.}}$ if and only if
\begin{enumerate}
\item[(1)] $ -1 - \gamma < \zeta < (1 + \gamma)(p - 1)$, and
\item[(2)] $ -(p -1)(n + 1 +  \gamma) < \zeta  < n + 1 + \gamma.$
\end{enumerate}
\end{proposition}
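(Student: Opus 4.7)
The plan is to compute $B_\gamma(w_\zeta)(z)$ and $B_\gamma(w_{-\zeta/(p-1)})(z)$ explicitly via the Rudin--Forelli integral estimates, then determine for which $\zeta$ the product in the definition of $\text{B}_{p,\gamma}^{\text{inv.}}$ stays bounded in $z \in \mathbb{B}_n$. Writing out the normalized Bergman kernel, we get
\begin{align}
B_\gamma(w_\zeta)(z) = c_\gamma (1-|z|^2)^{n+1+\gamma}\int_{\mathbb{B}_n} \frac{(1-|u|^2)^{\zeta+\gamma}}{|1-\langle z,u\rangle|^{2(n+1+\gamma)}}\,dv(u). \nonumber
\end{align}
The integral is finite if and only if $\zeta+\gamma > -1$. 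Applying the identical computation to $w_{-\zeta/(p-1)}$ forces $-\zeta/(p-1) > -1-\gamma$, i.e.\ $\zeta < (p-1)(1+\gamma)$. Together these two integrability conditions are exactly condition~(1).

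Next I would invoke the Rudin--Forelli estimates (e.g.\ as in \cite{Z2}): setting $c = \zeta+\gamma$ and $s = n+1+\gamma - \zeta$, the integral above behaves like $1$, $\log(1/(1-|z|^2))$, or $(1-|z|^2)^{-s}$ according as $s<0$, $s=0$, or $s>0$. Thus, as $|z|\to 1^-$,
\begin{align}
B_\gamma(w_\zeta)(z) \asymp
\begin{cases}
(1-|z|^2)^{\zeta} & \text{if } -1-\gamma < \zeta < n+1+\gamma,\\
(1-|z|^2)^{n+1+\gamma}\log\frac{1}{1-|z|^2} & \text{if } \zeta = n+1+\gamma,\\
(1-|z|^2)^{n+1+\gamma} & \text{if } \zeta > n+1+\gamma.
\end{cases} \nonumber
\end{align}
The corresponding three regimes for $B_\gamma(w_{-\zeta/(p-1)})(z)$ are governed by the threshold $-\zeta/(p-1) = n+1+\gamma$, i.e.\ by whether $\zeta$ is greater than, equal to, or less than $-(p-1)(n+1+\gamma)$.

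With these asymptotics in hand, the case analysis for the product $B_\gamma(w_\zeta)(z)\{B_\gamma(w_{-\zeta/(p-1)})(z)\}^{p-1}$ is essentially automatic. In the ``middle'' regime where $-(p-1)(n+1+\gamma) < \zeta < n+1+\gamma$, the exponents telescope to $\zeta + (p-1)\cdot(-\zeta/(p-1)) = 0$, so the product is uniformly bounded on $\mathbb{B}_n$. If instead $\zeta > n+1+\gamma$, the exponent of $(1-|z|^2)$ becomes $n+1+\gamma - \zeta < 0$ and the product blows up at the boundary; symmetrically, $\zeta < -(p-1)(n+1+\gamma)$ produces the exponent $\zeta + (p-1)(n+1+\gamma) < 0$. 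The logarithmic boundary cases $\zeta = n+1+\gamma$ or $\zeta = -(p-1)(n+1+\gamma)$ also fail, establishing that condition~(2) is sharp. The one point that requires mild care is keeping track of \emph{both} directions simultaneously and verifying that the boundary/logarithmic cases genuinely violate boundedness, rather than merely failing the asymptotic heuristic; but this is straightforward from the explicit Rudin--Forelli asymptotics. No single step is a serious obstacle, the proof being essentially a bookkeeping exercise built on two invocations of a standard estimate.
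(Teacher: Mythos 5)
Your proof is correct and is exactly the route the paper intends: the paper gives no details, saying only that the proposition follows from ``a messy but elementary application of the Rudin--Forelli estimates,'' and your computation of the two Berezin transforms via those estimates, with the integrability constraints yielding condition (1) and the boundary asymptotics (including the logarithmic endpoint cases) yielding the sharpness of condition (2), is precisely that bookkeeping carried out.
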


\noindent These two propositions tell us that the classes B${}_{p, \gamma} ^{\text{inv.}}$ and $B{}_{p, \gamma}$ do not coincide when either $p > 2 + \frac{n}{1 + \gamma}$ or $p < 1 + \frac{1 + \gamma}{n + 1 + \gamma}$.  However, it is unlikely that B${}_{p, \gamma} ^{\text{inv.}}$ and $B{}_{p, \gamma}$ coincide for any $p > 1, n \geq 1$, and $\gamma > -1$.

\medskip
Also, we have the following analog of Proposition $4.1$ for $\partial \mathbb{B}_n$: \begin{proposition} A weight $w$ on $\partial \mathbb{B}_n$ is in A${}_{p}$ if and only if $w$ satisfies \begin{align}  \|w\|_{\text{A}_p ^\text{Poi.}} = \underset{z \in \mathbb{B}_n}{\sup}  \ \{\widehat{w | k_z |^{p- 2}}  (z) \} \{\widehat{w^{- \frac{1}{p - 1}} |k_z| ^{q - 2}} (z) \}^{p - 1}  < \infty. \nonumber  \end{align}  In fact, there exists a constant $ C$ independent of $w$ where \begin{align} \frac{1}{C} \|w\|_{\text{A}_p } \leq  \|w\|_{\text{A}_p ^\text{Poi.}} \leq C \|w\|_{\text{A}_p} ^{\max\{p + 1, q +1 \}}. \nonumber \end{align}   \end{proposition}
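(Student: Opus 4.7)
The plan is to prove the two inequalities separately, in close parallel to Proposition 4.1: the role of pseudo-hyperbolic balls is played by the non-isotropic boundary balls $B(\zeta_0,\delta)=\{\eta\in\partial\mathbb{B}_n:|1-\bar\zeta_0\cdot\eta|<\delta\}$ (for which $\sigma(B(\zeta_0,\delta))\asymp\delta^n$), and the Bergman kernel is replaced by the normalized Szeg\H o kernel $k_z(\zeta)=(1-|z|^2)^{n/2}(1-\bar z\cdot\zeta)^{-n}$, satisfying $|k_z(\zeta)|^2=(1-|z|^2)^n/|1-\bar z\cdot\zeta|^{2n}$. For the lower bound $\|w\|_{A_p}\le C\|w\|_{A_p^{\text{Poi.}}}$, given a ball $B=B(\zeta_0,\delta)$ I would test the Poisson-type condition at $z=(1-\delta)\zeta_0$; since $|k_z(\zeta)|^2\asymp 1/\sigma(B)$ uniformly for $\zeta\in B$, restricting the Poisson--Szeg\H o integrals to $B$ gives
\[
\widehat{w|k_z|^{p-2}}(z)\gtrsim\sigma(B)^{-p/2}\!\int_B w\,d\sigma,\qquad \widehat{w^{-1/(p-1)}|k_z|^{q-2}}(z)\gtrsim\sigma(B)^{-q/2}\!\int_B w^{-\frac{1}{p-1}}\,d\sigma,
\]
and the arithmetic identity $(p-1)q=p$ collapses the product (with the second factor raised to the $(p-1)$-th power) into the standard $A_p$ quotient for $B$.

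For the upper bound $\|w\|_{A_p^{\text{Poi.}}}\le C\|w\|_{A_p}^{\max\{p+1,q+1\}}$, I would fix $z\in\mathbb{B}_n$, set $\zeta_0=z/|z|$, $\delta_0=1-|z|^2$, $B_k=B(\zeta_0,2^k\delta_0)$, and decompose $\partial\mathbb{B}_n$ into the dyadic annuli $A_0=B_0$ and $A_k=B_k\setminus B_{k-1}$ for $k\ge 1$. On $A_k$ one has $|k_z|^p\asymp 2^{-knp}\delta_0^{-np/2}$ and $\sigma(B_k)\asymp 2^{kn}\delta_0^n$, so writing $m_k=w(B_k)/\sigma(B_k)$ and $n_k=w^{-1/(p-1)}(B_k)/\sigma(B_k)$, the annular estimate gives
\[
\widehat{w|k_z|^{p-2}}(z)\lesssim \delta_0^{-n(p-2)/2}\sum_{k\ge 0}2^{-kn(p-1)}m_k,
\]
and the analogous bound holds for the other Poisson--Szeg\H o factor with $p$ replaced by $q$ and $m_k$ by $n_k$. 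The powers of $\delta_0$ cancel in the final product by the identity $(p-2)+(p-1)(q-2)=0$, reducing matters to estimating $\bigl(\sum_k 2^{-kn(p-1)}m_k\bigr)\bigl(\sum_j 2^{-jn(q-1)}n_j\bigr)^{p-1}$.

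The main obstacle is that the plain doubling estimate $m_k\lesssim\|w\|_{A_p}^k\cdot 2^{kn(p-1)}m_0$ leaves each geometric series at the edge of divergence. To close the estimate I would invoke the self-improvement of $A_p$: there exists $\tau=\tau(\|w\|_{A_p})>0$ such that $w\in A_{p-\tau}$ with quantitatively controlled norm, equivalently a reverse H\"older inequality for $w$. This yields the strictly sharper bound $m_k\le C(\|w\|_{A_p})\cdot 2^{kn(p-1-\tau)}m_0$, with a symmetric bound for $n_k$ obtained from the dual fact that $w^{-1/(p-1)}\in A_q$ with $\|w^{-1/(p-1)}\|_{A_q}\le\|w\|_{A_p}^{q-1}$. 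The sums then converge geometrically, and a final application of the $A_p$ condition at the base ball $B_0$ (giving $m_0 n_0^{p-1}\le\|w\|_{A_p}$) together with careful tracking of the constants $C(\|w\|_{A_p})$ through the geometric summation yields the claimed exponent $\max\{p+1,q+1\}$, symmetric under the $p\leftrightarrow q$ (equivalently $w\leftrightarrow w^{-1/(p-1)}$) duality. This quantitative bookkeeping, identical in spirit to the Bergman analog in Proposition 4.1, is the delicate part of the proof.
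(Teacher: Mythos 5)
Your ball decomposition and kernel estimates are essentially the paper's: the non-isotropic balls $B_k$ centered at $\zeta_0=z/|z|$ at scale $1-|z|$, the annular bound for $|k_z|$, the cancellation of the powers of $\delta_0$, and the easy direction $\|w\|_{\text{A}_p}\lesssim\|w\|_{\text{A}_p^{\text{Poi.}}}$ by testing at $z=(1-\delta)\zeta_0$ are all fine. The gap is in the summation step. You bound the two series $\sum_k 2^{-kn(p-1)}m_k$ and $\sum_j 2^{-jn(q-1)}n_j$ \emph{separately}, and to beat the borderline growth $m_k\lesssim\|w\|_{\text{A}_p}2^{kn(p-1)}m_0$ you invoke the self-improvement $w\in \text{A}_{p-\tau}$ with $\tau=\tau(\|w\|_{\text{A}_p})$. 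This does give the qualitative equivalence, but it imports a quantitative reverse H\"older inequality on $\partial\mathbb{B}_n$ (a nontrivial ingredient the paper never uses), and it cannot produce the stated exponent: each series costs a factor of order $1/\tau$, and even with the sharpest known openness one can only guarantee $\tau\gtrsim\|w\|_{\text{A}_p}^{-(q-1)}$ (and $\tau'\gtrsim\|w\|_{\text{A}_p}^{-1}$ for the dual weight $w^{-1/(p-1)}$). Tracking these factors, raising the second series to the power $p-1$, and using $m_0n_0^{p-1}\le\|w\|_{\text{A}_p}$ yields an exponent of order $p+q+1$, not $\max\{p+1,q+1\}$; the sentence ``careful tracking of the constants \dots yields the claimed exponent'' is precisely the step that is missing, and bounding the two factors separately appears structurally unable to supply it.

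The paper's proof never separates the factors and needs no reverse H\"older inequality. It estimates the $p$-th root of the Poisson characteristic, $\bigl(\int w|k_z|^p\,d\sigma\bigr)^{1/p}\bigl(\int w^{-1/(p-1)}|k_z|^q\,d\sigma\bigr)^{1/q}$, expands both integrals over the annuli $B_{k+1}\setminus B_k$, and uses subadditivity of $x\mapsto x^{1/p}$ and $x\mapsto x^{1/q}$ to obtain a double sum over pairs $(k,k')$. For each pair it applies the A${}_p$ inequality only on the \emph{larger} of the two balls, and transfers the mass of the smaller ball to the larger by the elementary A${}_\infty$-type decay $w(B_{k+1})\le\delta_1^{k'-k}w(B_{k'+1})$ for $k\le k'$, with the explicit constant $\delta_1=1-(2^p\|w\|_{\text{A}_p})^{-1}$ (and the dual statement for $w^{-1/(p-1)}$ when $k'<k$); the kernel decay supplies summability in the remaining index. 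The geometric series in $\delta_1^{1/p}$ contributes a factor $\approx\|w\|_{\text{A}_p}$ and the A${}_p$ inequality on the large ball contributes $\|w\|_{\text{A}_p}^{1/p}$ (resp.\ $\|w\|_{\text{A}_p}^{q/p}$ on the half of the sum with $k'<k$), which after raising to the power $p$ is exactly where $\max\{p+1,q+1\}$ comes from. If you reorganize your argument along these lines you can delete the self-improvement step entirely and the constant bookkeeping becomes routine.
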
 \noindent  Here, $k_z (w) = \frac{(1 - |z|^2)^{n/2}}{(1 - w \cdot z )^n}$ is the normalized reproducing kernel of $H^2(\partial \mathbb{B}_n)$.

\medskip
   We will defer the proof of Propositions $4.1$ and $4.4$ until the last section since the proof uses ideas found there.  It should be noted that Propositions $4.1$ and $4.4$ are interestingly \textit{not} true in the $\mathbb{R}^n$ setting when $n \geq 2$. In particular, if $w(x) = |x|^\alpha$, then $w \in \text{A}_2$ if and only if $|\alpha| < n $, whereas the integrals in the expression for the $\text{A}_2 ^\text{poi}$ characteristic of $w$ diverge if $\alpha \geq 1$  (see \cite{St}).

\medskip
When $p = 2$, Proposition $4.4$ was proven to be sharp in \cite{H} for $\mathbb{R}.$  In particular, if $-1 < \alpha < 1/2$ and if \begin{align} w =
\left\{\begin{array}{ll} 1
 & \textrm{ for } x \in [0, 1)^c  \\ (1 - \alpha)^n & \textrm{ for }  x \in (1/2^{n + 1}, 1/2^n]
\end{array} \right. \tag{4.1} \end{align} then $\|w\|_{\text{A}_2} \approx (1 - 2\alpha)^{-1}$, while $\|w\|_{\text{A}_2 ^\text{inv.}} \approx (1 - 2\alpha)^{-2}$.  Since virtually the same example produces the same conclusion on $\partial \mathbb{D}$, it would be interesting to know if some example similar to $(4.1)$ can be cooked up for the unit disk or the unit ball.

\medskip

Note that proposition $4.2$ immediately gives us that one can \textit{not} define the class B${}_{p, \gamma}$ in terms of Bergman balls of a fixed radius.  In particular, note that $w_\zeta (z) = (1 - |z|^2)^\zeta$ for any $\zeta \in \mathbb{R}$ satisfies \begin{align} \underset{z \in \mathbb{B}_n}{\sup} \  \left(\frac{1}{v_\gamma (D(z, r))} \int_{D(z, r)} w_\zeta \, dv_\gamma \right) \left(\frac{1}{v_\gamma (D(z, r))} \int_{D(z, r)} w_\zeta  ^{ - \frac{1}{p - 1}} \, dv_\gamma \right)^{p - 1} < C_r  \nonumber \end{align} for some $C_r \geq 1$, where here $D(z, r) \subseteq \mathbb{B}_n$ is a ball with respect to the Bergman metric with center $z$ and radius $r$.

\medskip
 It should be remarked that the Muckenhoupt A${}_p$ class on $\mathbb{R}^n$ coincides with the class of all weights $w$ on $\mathbb{R}^n$ such that \begin{align} \|w\|_{\text{A}_p ^{\text{heat}}} = \underset{(x, \alpha) \in \mathbb{R}^n \times \mathbb{R}_+}{\sup} \  \left(\widetilde{w}^{(\alpha)} (x) \right) \left(\widetilde{ w^{- \frac{1}{p - 1}}} ^{(\alpha)}(x) \right)^{p - 1} < \infty \nonumber \end{align}  (this was proven in \cite{PV} for $n = 1$, but the proof can easily be extended to the $n > 1$ case.) Moreover, the characteristics defined by the corresponding supremums are equivalent.

\medskip
\noindent On the other hand, an argument that is similar to (but easier than) the proof of Theorem $3.1$ tells us that A${}_p ^\text{restricted}$ coincides with the class of all weights $w$ on $\mathbb{R}^n$ where for each $\alpha, \beta > 0$, there is some $C_{\alpha, \beta} < \infty$ such that \begin{align} \underset{x \in \mathbb{R}^n}{\sup} \ \left(\widetilde{w}^{(\alpha)} (x) \right) \left( \widetilde{w^{- \frac{1}{p - 1}}}^{(\beta)} (x) \right)^{p - 1} < C_{\alpha, \beta} \tag{4.2}. \end{align} Unfortunately, the argument gives no relationship between $(4.2)$ for fixed $\alpha, \beta$ and the A${}_{p, r}$  characteristic of a weight for a fixed $r$, though trivially there exists $C_{r, \alpha, \beta}$ where \begin{align} \|w\|_{\text{A}{}_{p, r}} \leq C_{r, \alpha, \beta} \underset{x \in \mathbb{R}^n}{\sup} \ \left(\widetilde{w}^{(\alpha)} (x) \right) \left( \widetilde{w^{- \frac{1}{p - 1}}}^{(\beta)} (x) \right)^{p - 1}. \nonumber \end{align}

\medskip We will end our discussion of A$_p$ and B${}_{p, \gamma}$ weights by comparing one last property of A${}_p$ and B${}_{p, \gamma}$ weights.  Recall that Coifmann and Fefferman proved in \cite{CF} that \begin{align} \text{A}_p = \bigcup_{1 < q < p} {\text{A}}_{q} \nonumber \end{align} if $ p > 1$.   Note that one side of this equality holds trivially by H\"{o}lder's inequality.  Using Lemma $5.8$ in Section $5$, it is not difficult to see that \begin{align} \mathcal{A} \cap  {\text{B}}_{p, \gamma}  \subseteq \bigcup_{1 < q < p} {\text{B}}_{q, \gamma} \tag{4.3} \end{align} where $\mathcal{A} $ is the collection of all $|f|$ such that $f$ is analytic on $\mathbb{D}$ with no zeros in $\mathbb{D}$.

 When $\gamma = 0$, Borichev generalized $(4.3)$ and proved (among other things) that  \begin{align} \mathcal{E} \mathcal{S}   \cap  \text{B}_{p, \gamma}  \subseteq \bigcup_{1 < q < p} \text{B}_{q, \gamma} \nonumber \end{align} where $\mathcal{E} \mathcal{S} $ is the class of all functions $e^u$ for $u$ subharmonic on $\mathbb{D}$ (see \cite{B}).  Furthermore,  it was shown in \cite{B} that if $\mathcal{S}$ is the class of non-negative subharmonic functions on $\mathbb{D}$, then \begin{align} \mathcal{S} \cap  \text{B}_{p, \gamma}  \nsubseteq \bigcup_{1 < q < p} \text{B}_{q, \gamma}. \nonumber  \end{align}   Given these results, it would be interesting to know if the results in \cite{B}, can be extended to general $\gamma > -1$ and $n > 1$, or if $(4.3)$ is true for $n > 1$ .

\section{ A ``reverse H\"{o}lder inequality" on $\mathbb{D}$.}

In this last section, we will provide a proof of Theorem $2.1$ for the disk $\mathbb{D}$ by extending the ideas of \cite{SZ1, SZ2} from the $p = 2$ case to the general $p > 1$ case. In particular, we will prove the following ``reverse H\"{o}lder inequality:" \begin{theorem} Let $f \in L_a ^p (\mathbb{D}, dA_\gamma)$ and $f^{-1} \in L_a ^q (\mathbb{D}, dA_\gamma)$ satisfy \begin{align} \underset{z \in \mathbb{D}}{\sup} \ \left\{B_\gamma \left(|f k_z^{1 - 2/p}|^p \right)(z) \right\}^\frac{1}{p}  \left\{B_\gamma \left(|f^{-1} k_z^{1 - 2/q}|^q \right)(z) \right\}^\frac{1}{q} < \infty. \tag{5.1} \end{align} Then there exists $\epsilon > 0$ such that \begin{align} \underset{z \in \mathbb{D}}{\sup} \ \left\{B_\gamma \left(|f k_z^{1 - 2/p}|^{p + \epsilon}  \right)(z) \right\}^\frac{1}{p + \epsilon}  \left\{B_\gamma \left(|f^{-1} k_z^{1 - 2/q}|^{q + \epsilon} \right)(z) \right\}^\frac{1}{q + \epsilon} < \infty. \tag{5.2} \end{align}\end{theorem}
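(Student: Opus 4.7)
My plan is to adapt the Zheng-Stroethoff strategy from \cite{SZ1, SZ2} to general $p > 1$ by establishing a reverse H\"older inequality for B\`ekoll\`e-Bonami weights and using it to bootstrap the integrability exponent in (5.1), in direct analogy with the classical Gehring-Muckenhoupt self-improvement for A${}_p$ weights.

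First, a direct computation rewrites the Berezin quantities in (5.1) as
\begin{equation*}
B_\gamma(|fk_z^{1 - 2/p}|^p)(z) = \int_{\mathbb{D}} |f|^p |k_z|^p \, dA_\gamma,
\qquad
B_\gamma(|f^{-1}k_z^{1 - 2/q}|^q)(z) = \int_{\mathbb{D}} |f|^{-q} |k_z|^q \, dA_\gamma.
\end{equation*}
Using the kernel estimate $|k_z(u)| \asymp (1-|z|^2)^{-(2 + \gamma)/2}$ on a hyperbolic ball of fixed radius about $z$, together with the Rudin-Forelli estimates to handle the tail, one shows that (5.1) is quantitatively equivalent to $w := |f|^p \in \text{B}_{p, \gamma}$ (this is essentially Proposition 4.1 on $\mathbb{D}$). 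Symmetrically, (5.2) is equivalent to $|f|^{p + \epsilon} \in \text{B}_{p + \epsilon, \gamma}$. Thus it suffices to prove the self-improvement $|f|^p \in \text{B}_{p, \gamma} \Rightarrow |f|^{p + \epsilon} \in \text{B}_{p + \epsilon, \gamma}$ for some $\epsilon > 0$.

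The heart of the argument is the reverse H\"older inequality: if $w \in \text{B}_{p, \gamma}$, then there exist $\delta > 0$ and $C > 0$ such that for every $D$ intersecting $\partial \mathbb{D}$,
\begin{equation*}
\left(\frac{1}{v_\gamma(D)} \int_D w^{1 + \delta} \, dv_\gamma\right)^{1/(1 + \delta)}
\leq \frac{C}{v_\gamma(D)} \int_D w \, dv_\gamma .
\end{equation*}
This I would prove via a Calder\'on-Zygmund stopping-time decomposition on the tree of sets $D$: at threshold $\lambda$, select a maximal pairwise-disjoint family of sub-boxes of $D$ whose $w$-average exceeds $\lambda$. The $\text{B}_{p, \gamma}$ hypothesis, combined with the doubling of $v_\gamma$ along this tree (which parallels Lemma 3.2), forces exponential decay of the $w$-distribution above its mean, which integrates to the reverse H\"older bound.

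Applying the reverse H\"older inequality to both $w = |f|^p$ and $w^{-1/(p - 1)} = |f|^{-q}$ (the latter in the dual class $\text{B}_{q, \gamma}$) yields exponents $\delta_1, \delta_2 > 0$. For $\epsilon > 0$ chosen so that $(p + \epsilon)/p \leq 1 + \delta_1$ and $(q + \epsilon)/q \leq 1 + \delta_2$, a standard H\"older argument gives $|f|^{p + \epsilon} \in \text{B}_{p + \epsilon, \gamma}$, which converts back to (5.2). The principal obstacle is the Calder\'on-Zygmund step in the curved geometry of the disk: the sets $D$ do not form a classical dyadic grid and $v_\gamma$ scales anisotropically. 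Zheng-Stroethoff handled $p = 2$ by exploiting the M\"obius invariance of $B_\gamma$, which fails for $p \neq 2$; the substitute here is to work directly on the natural tree of Carleson-type boxes, as in the original proof of Gehring's lemma, which is precisely the hyperbolic Calder\'on-Zygmund decomposition alluded to in the introduction.
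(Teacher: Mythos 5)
The central lemma in your plan---a reverse H\"older inequality valid for \emph{every} $w \in \text{B}_{p,\gamma}$ on boxes touching $\partial\mathbb{D}$---is false, and this is exactly where B\`ekoll\`e--Bonami weights differ from Muckenhoupt weights. The B${}_{p,\gamma}$ condition only constrains averages over pseudo-balls that intersect $\partial \mathbb{D}$, while the stopping boxes produced by your Calder\'on--Zygmund selection inside a fixed $D$ are interior sub-boxes, on which the hypothesis gives no information at all; the exponential decay of the distribution of $w$ above its mean therefore cannot be extracted from $\text{B}_{p,\gamma}$ plus doubling of $v_\gamma$ alone. Indeed, a reverse H\"older inequality for general $\text{B}_{p,\gamma}$ weights would yield the open-ended property $\text{B}_{p,\gamma} = \bigcup_{1<q<p}\text{B}_{q,\gamma}$ by the usual duality argument, and Borichev's example \cite{B} (discussed in Section $4$) of a nonnegative subharmonic weight in $\text{B}_{p,\gamma}$ belonging to no smaller class rules this out. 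The paper's proof is organized precisely to avoid this trap: the $A_p$-type inequality is first proved on \emph{all} dyadic subrectangles of every Carleson square of the Bergman tree (Lemma $5.5$), and that step uses the analyticity of $f$ and $f^{-1}$ together with the full kernel-weighted hypothesis $(5.1)$, through the pointwise comparability of Lemma $5.2$ and Propositions $5.3$--$5.4$; only then do Lemmas $5.6$--$5.8$ give a localized reverse H\"older inequality and Lemma $5.9$ the A${}_\infty$-type estimate. By reducing to bare membership $|f|^p \in \text{B}_{p,\gamma}$ you discard exactly the information that makes the self-improvement true.

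The conversion steps at the two ends are also not correct as stated. While $(5.1)$ is indeed equivalent to $|f|^p \in \text{B}_{p,\gamma}$ (Proposition $4.1$), the target $(5.2)$ is \emph{not} equivalent to $|f|^{p+\epsilon} \in \text{B}_{p+\epsilon,\gamma}$: writing out the Berezin quantities in $(5.2)$ gives $\int_{\mathbb{D}} |f|^{p+\epsilon}|k_z|^{p+\epsilon(1-2/p)}\,dA_\gamma$ and $\int_{\mathbb{D}} |f|^{-(q+\epsilon)}|k_z|^{q+\epsilon(1-2/q)}\,dA_\gamma$, and since $q+\epsilon$ is not the conjugate exponent of $p+\epsilon$ and the kernel powers are not $p+\epsilon$ and $(p+\epsilon)/(p+\epsilon-1)$, Proposition $4.1$ does not identify this product with any single $\text{B}_{s,\gamma}$ characteristic, so there is no ``convert back'' step. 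This mismatch is the concrete manifestation of the loss of M\"obius invariance for $p\neq 2$ that you acknowledge but do not resolve: in the paper it is handled by the final summation over the Bergman tree, estimates $(5.11)$--$(5.17)$, where the decay of $|k_u|$ off the enlarged Carleson boxes is balanced against the A${}_\infty$-type gain of Lemma $5.9$, and the resulting series converge uniformly in $\beta$ only after coupling the two bump parameters, e.g.\ $\epsilon_1/(p(p+\epsilon_1)) = \epsilon_2/(q(q+\epsilon_2))$. Your proposal contains no substitute for this summation, so even granting a reverse H\"older inequality it would not reach $(5.2)$.
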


\noindent
Once this is proved, Theorem $1.2$ of \cite{M} will give us that $T_f T_{\overline{f^{-1}}}$ is bounded on $L_a ^p(\mathbb{D}, dA_\gamma)$.  Easy arguments from Section $2$ will then complete the proof of Theorem $2.1$ for $n = 1$.

\medskip
 When $p = 2$, condition $(5.2)$ is M\"{o}bius invariant, so that it is only necessary to prove that $(5.1)$ implies $(5.2)$ when $z = 0$ in $(5.2)$ (which was done in \cite{SZ1, SZ2}.) In other words, it is proven in \cite{SZ1, SZ2} that if both $f,  \  f^{-1} \in L_a ^2 (\mathbb{D}, dA_\gamma)$ satisfy \begin{align} \underset{z \in \mathbb{D}}{\sup} \ \left\{B_\gamma \left(|f|^2 \right)(z) \right\}^\frac{1}{2}  \left\{B_\gamma \left(|f|^{-2} \right)(z) \right\}^\frac{1}{2} < \infty, \nonumber \end{align} then there exists $\epsilon > 0$ and $C > 0$ such that \begin{align}\left(\int_{\mathbb{D}} |f|^{2 + \epsilon}  \, dA_\gamma \right)^\frac{1}{2 + \epsilon}  \leq C \left(\int_{\mathbb{D}} |f|^{2 }  \, dA_\gamma \right)^\frac{1}{2 }. \tag{5.3}\end{align}  When $p \neq 2$, condition $(5.2)$ is not necessarily  M\"{o}bius invariant, which means that it is not enough to just verify $(5.3)$ (where $p$ replaces $2$.)

 \medskip To prove Theorem $5.1$, we will decompose $\mathbb{D}$ into convenient Carleson squares using the ``Bergman tree" of \cite{ARS}. We will then run a Calderon-Zygmund decomposition on each of these Carleson squares to prove a reverse H\"{o}lder type inequality on each of these Carleson squares that is similar to $(5.3)$. This will allow us to prove that $f$ satisfies an ``A${}_\infty$ type" condition with respect these Carleson squares if $f$ satisfies $(5.1)$.   The decay provided by the normalized Bergman kernel, combined with this ``A${}_\infty$ type" condition, will then allow us to prove Theorem $5.1$

  We will now go through the details of the proof of Theorem $5.1$. In what follows, we will use the notation $A \approx B$ for two quantities $A$ and $B$ if there exists $C > 0$ depending only on $\gamma, n, $ and $p$ where \begin{align} \frac{1}{C} A \leq B \leq C A. \nonumber \end{align} The notation $A \lesssim B$ and $A \gtrsim B$ will have similar meanings.  For any $0 < h \leq 1$ and $0 \leq \theta < 2\pi$, let $S_{h, \theta} \subseteq \mathbb{D}$ denote the Carleson square defined by\begin{align} S_{h, \theta} = \{re^{i t} : 1 - h \leq r < 1, \ \theta \leq t < \theta + h\}  \nonumber \end{align} and let \begin{align} T_{h, \theta} = \{re^{i t} : 1 - h \leq r < 1 - \frac{h}{2}, \ \theta \leq t < \theta + h\} \nonumber  \end{align} denote the ``bottom half" of the Carleson square $S_{h, \theta}$. Here we will only be interested in Carleson and bottom half Carleson squares of the form $S_{h, \theta}$ where $h = 2^{-n}$ and $\theta = 2\pi (k 2^{-n})$ for $n = 0, 1, 2, \ldots$ and $k = 0, 1, \ldots, 2^n - 1$.

\medskip
Let us now introduce the ``Bergmann tree" of Arcozzi, Rochberg, and Sawyer for $\mathbb{D}$ from \cite{ARS}.  Let $\mathcal{D}$ be the index set defined by \begin{align} \mathcal{D} = \{(n, k) : n = 0, 1, 2, \ldots \text{ and } k = 0, 1, \ldots, 2^n - 1\}. \nonumber \end{align} We call $o = (0, 1)$ the root of $\mathcal{D}$.  We give $\mathcal{D}$ a partial ordering by declaring $\eta \leq \beta$ if $S_\beta \subseteq S_\eta$, and call $\mathcal{D}$ with this partial ordering the Bergman tree.  Note that this partial ordering means that $S_o \leq S_\beta$ for every $\beta \in \mathcal{D}$. Also, we will let $c_\beta$ denote the center (radially and angularly) of $T_\beta$ and let $d(\beta) = n$ if $\beta = (n, k)$. Moreover, if $\beta \leq \beta'$ with $d(\beta) = d(\beta') - 1$ then we say $\beta'$ is a child of $\beta$.  Cleary each $\beta \in \mathcal{D}$ has only two children. Note that by definition we have that \begin{align} S_\eta = \bigcup_{\beta \geq \eta} T_\beta. \nonumber \end{align}
\medskip

If $z, w \in \mathbb{D}$ where $z = r e^{i\theta}, \  w = s e^{i \vartheta}$, and $0 \leq \theta, \vartheta < 2\pi$, then it is easy to see that \begin{align} |1 - z \overline{w}|^2 = (1 - rs)^2 + 4rs \sin^2 \left(\frac{\theta - \vartheta}{2}\right). \tag{5.4} \end{align} Thus, there exists $R> 0$ independent of $\beta \in \mathcal{D}$ such that \begin{align} D(c_\beta, 1/R) \subseteq T_\beta \subseteq D(c_\beta, R) \tag{5.5} \end{align} where $D(z, r)$ is a Bergman disk of radius $r$ and center $z$.  Also, it is not difficult to see that \begin{align} A_\gamma (T_\beta) \approx A_\gamma (S_\beta) \approx 2^{- d(\beta) (2 + \gamma)}
\nonumber \end{align} for each $\beta \in \mathcal{D}$.

\medskip
Given any $S_\beta$ with $\beta \in \mathcal{D}$, we can form dyadic partitions of $S_\beta$ by dyadically bisecting $S_\beta$ in the angular and radial direction.  Any subset $Q \subset S_\beta$ formed in this way will be called a dyadic subrectangle of $S_\beta$.  Note that since $\mathbb{D} = S_o$, the ``dyadic rectangles" of \cite{SZ1, SZ2} are dyadic subrectangles of $\mathbb{D}$ according to our definition. In particular, any dyadic subrectangle of $\mathbb{D}$ can be written in the form \begin{align}Q_{n, m, k} = \{re^{i\theta} : (m - 1) 2^{-n}  \leq r < m 2^{-n} \text{ and } (k - 1) 2^{-n + 1} \pi \leq \theta < k 2^{-n + 1} \pi \} \nonumber \end{align} where $k, m, $ and $n$ are positive integers such that $m, k \leq 2^n$.   Also, the center of $Q = Q_{n, m, k} $ is the point $z_Q = (m - \frac{1}{2}) 2^{-n} e^{i \vartheta}$ with $\vartheta = (k - \frac{1}{2})  2^{1-n} \pi$.  Throughout this section we will use $z_Q$ to denote the center (angularly and radially) of a dyadic subrectangle of $\mathbb{D}$, whereas $c_\beta$ will denote the center of $T_\beta$ for $\beta \in \mathcal{D}$.

\medskip
\begin{lemma} Let $f \in L_a^p (\mathbb{D}, dA_\gamma)$ satisfy $(5.1)$ and let $R > 0$. Then there exists $C_R > 0$ such that \begin{align} \frac{1}{C_R} \leq \frac{|f(z)|}{|f(w)|} \leq C_R \nonumber \end{align} whenever $z \in D(w, R)$.  \end{lemma}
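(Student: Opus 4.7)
The plan is to deduce Lemma 5.2 directly from the pointwise bound (2.3) applied to $f$ and to $f^{-1}$, combined with a uniform pointwise comparison of the normalized Bergman kernels $|k_z|$ and $|k_w|$ on the whole disk. This avoids any subharmonicity machinery and uses only the reproducing-kernel identity that the paper has already recorded.

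First, observe that since (5.1) is finite, evaluation at $\xi = 0$ forces $f^{-1} \in L_a^q(\mathbb{D}, dA_\gamma)$; in particular $f$ never vanishes on $\mathbb{D}$. Set
\begin{equation*}
A(\xi) = \left\{B_\gamma\bigl(|f\,k_\xi^{1-2/p}|^p\bigr)(\xi)\right\}^{1/p}, \qquad B(\xi) = \left\{B_\gamma\bigl(|f^{-1}\,k_\xi^{1-2/q}|^q\bigr)(\xi)\right\}^{1/q},
\end{equation*}
so that (5.1) reads $A(\xi) B(\xi) \leq M$ for every $\xi \in \mathbb{D}$. Applying (2.3) (with $n = 1$) to $f$ at the point $z$ and to $f^{-1}$ at the point $w$, and using the identity $1 - 2/q = -(1 - 2/p)$ when we multiply the two inequalities, we obtain
\begin{equation*}
\frac{|f(z)|}{|f(w)|} \leq \left(\frac{1-|z|^2}{1-|w|^2}\right)^{\frac{2+\gamma}{2}\left(1 - \frac{2}{p}\right)} A(z)\,B(w).
\end{equation*}
For $z \in D(w, R)$ the prefactor is comparable to $1$ with constants depending only on $R$, so it suffices to show $A(z)\,B(w) \lesssim 1$; together with the hypothesis $A(w)B(w) \leq M$ this reduces to the comparison $A(z) \approx A(w)$.

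The latter in turn amounts to showing $|k_z(u)| \approx |k_w(u)|$ uniformly in $u \in \mathbb{D}$ whenever $z \in D(w, R)$. Using the standard identity $|1-\bar\xi u|^2(1-|\varphi_\xi(u)|^2) = (1-|\xi|^2)(1-|u|^2)$, one rewrites the normalized kernel in the closed form $|k_\xi(u)|^2 = (1-|u|^2)^{-(2+\gamma)}(1-|\varphi_\xi(u)|^2)^{2+\gamma}$, so the kernel comparison reduces to bounding $(1-|\varphi_z(u)|^2)/(1-|\varphi_w(u)|^2)$. Since $|\varphi_\xi(u)|$ is the pseudo-hyperbolic distance from $u$ to $\xi$, and $|\varphi_w(z)|$ is bounded by a constant $t_R < 1$ depending only on $R$, the Möbius form of the triangle inequality
\begin{equation*}
|\varphi_z(u)| \leq \frac{|\varphi_w(z)| + |\varphi_w(u)|}{1 + |\varphi_w(z)||\varphi_w(u)|}
\end{equation*}
together with its analogue with $z$ and $w$ interchanged yields $1 - |\varphi_z(u)| \gtrsim 1 - |\varphi_w(u)|$ and vice versa, with constants depending only on $R$. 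This produces the desired two-sided uniform comparison of the kernels.

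Assembling these steps gives $|f(z)|/|f(w)| \leq C_R$, and the reverse inequality follows by interchanging the roles of $z$ and $w$. The only genuinely substantive ingredient is the uniform kernel comparison of the third paragraph; but as sketched above it is a direct consequence of Möbius invariance of the pseudo-hyperbolic metric, so it does not constitute a real obstacle.
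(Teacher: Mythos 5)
Your argument is correct, but it takes a somewhat different route than the paper. The paper anchors everything at the single point $w$: writing $z=\varphi_w(u)$ with $u\in D(0,R)$, it bounds $|f(z)||k_w^{1-2/p}(z)|$ by $\{B_\gamma(|fk_w^{1-2/p}|^p)(w)\}^{1/p}$ via the sub-mean-value property (uniform point evaluation on the compact set $D(0,R)$ after the M\"obius change of variables), and uses the local kernel estimate (Lemma 4.30 of \cite{Z1}) $|k_w^{1-2/p}(z)|\approx (1-|w|^2)^{(\frac2p-1)(\frac{2+\gamma}{2})}$ on $D(w,R)$; combined with $(2.3)$ for $f^{-1}$ at $w$, the powers of $(1-|w|^2)$ cancel and $(5.1)$ is invoked only at $w$. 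You instead apply $(2.3)$ at the two different centers $z$ and $w$ and transfer between them through the global comparison $|k_z(u)|\approx_R |k_w(u)|$ for all $u\in\mathbb{D}$, which you correctly obtain from $|k_\xi(u)|^2=\bigl((1-|\varphi_\xi(u)|^2)/(1-|u|^2)\bigr)^{2+\gamma}$ and the M\"obius triangle inequality for the pseudo-hyperbolic metric; since $A(\xi)^p=\int_{\mathbb{D}}|f|^p|k_\xi|^p\,dA_\gamma$, this yields $A(z)\approx_R A(w)$ and the lemma follows. Your version buys a slightly stronger intermediate fact (the averages $A(\xi)$, $B(\xi)$ are slowly varying on Bergman balls), while the paper's version needs only the local kernel estimate plus the off-center sub-mean-value bound, which is the mechanism reused later (e.g.\ in Lemma 5.5). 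One caveat: your opening claim that finiteness of $(5.1)$ at $\xi=0$ ``forces $f^{-1}\in L_a^q$'' is not a valid deduction -- for $1<q<2$ a function with a simple zero still has $|f|^{-q}$ integrable, and membership in the analytic space $L_a^q$ presupposes exactly the non-vanishing you are trying to conclude. This is harmless here, since Theorem 5.1 assumes $f^{-1}\in L_a^q(\mathbb{D},dA_\gamma)$ outright; simply cite that hypothesis rather than derive it.
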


\begin{proof} The proof is very similar to the proof of Lemma $4.3$ in \cite{SZ2}, though we include it for the sake of completeness. According to lemma $4.30$ in \cite{Z1}, there exists $C > 0$ depending on $n, p, R,$ and $\gamma$ such that \begin{align} \frac{1}{C }  (1 - |w|^2)^{\left(\frac{2}{p} - 1\right) \left(\frac{2 + \gamma}{2}\right) } \leq |k_w ^{1 - \frac{2}{p}} (z)| \leq { C} (1 - |w|^2)^{\left(\frac{2}{p} - 1\right) \left(\frac{2 + \gamma}{2}\right) }\nonumber \end{align} whenever $z \in D(w, R)$,.

\medskip
 For $z \in D(w, R)$, let $z = \varphi_w (u)$ with $u \in D(0, R)$. Then we have that \begin{align} |f(z)| & \leq   C (1 - |w|^2)^{\left(  1 - \frac{2}{p}   \right)\left(\frac{2 + \gamma}{2}\right)} |f(\varphi_w (u))| |k_w ^{1 - \frac{2}{p}} (\varphi_w (u))|  \nonumber \\ & \leq  C  (1 - |w|^2)^{\left( 1 - \frac{2}{p}  \right)\left(\frac{2 + \gamma}{2}\right)}  \{ B_\gamma ( |f k_w ^{1 - \frac{2}{p}} |^p  ) (w) \}^\frac{1}{p}. \nonumber\end{align}

\medskip
\noindent Similarly, for $f^{-1}$ we have that \begin{align} \frac{1}{|f(w)|} \leq C (1 - |w|^2)^{\left( 1 - \frac{2}{q}  \right)\left(\frac{2 + \gamma}{2}\right)}  \{ B_\gamma ( |f^{-1} k_w ^{1 - \frac{2}{q}} |^q  ) (w) \}^\frac{1}{q} \nonumber \end{align} which means that \begin{align} \frac{|f(z)|}{|f(w)|} \leq C   \{ B_\gamma ( |f k_w ^{1 - \frac{2}{p}} |^p  ) (w) \}^\frac{1}{p} \{ B_\gamma ( |f^{-1} k_w ^{1 - \frac{2}{q}} |^q  ) (w) \}^\frac{1}{q} \leq C \nonumber \end{align} where here $C$ depends on $R$ and the the supremum in $(5.1)$.  Replacing $f$ by $f^{-1}$ and $p$ with $q$ in the above argument now completes the proof.  \end{proof}

\medskip
The following two results were proven in \cite{SZ2}.
\begin{proposition} For every dyadic subrectangle $Q$ of $\mathbb{D}$ and every $z \in Q$, we have that \begin{align}
|k_{z_Q} (z)|^2 \gtrsim \frac{1}{(1 - |z_Q|^2)^{2 + \gamma}} \nonumber \end{align}  \end{proposition}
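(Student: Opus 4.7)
The plan is to unfold the definition of the normalized kernel and reduce the inequality to a geometric estimate on $|1 - z\overline{z_Q}|$ in terms of $1 - |z_Q|^2$. Since
\[
k_{z_Q}(z) = \frac{(1-|z_Q|^2)^{(2+\gamma)/2}}{(1 - z\overline{z_Q})^{2+\gamma}},
\]
we have $|k_{z_Q}(z)|^2 = (1-|z_Q|^2)^{2+\gamma}/|1 - z\overline{z_Q}|^{2(2+\gamma)}$, and the desired bound becomes $|1 - z\overline{z_Q}|^{2(2+\gamma)} \lesssim (1-|z_Q|^2)^{2(2+\gamma)}$, i.e.\ it suffices to show
\[
|1 - z\overline{z_Q}| \lesssim 1 - |z_Q|^2 \quad \text{for every } z \in Q.
\]

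To establish this, I would first invoke identity $(5.4)$ to rewrite the left-hand side as
\[
|1 - z\overline{z_Q}|^2 = (1 - rs)^2 + 4rs\,\sin^2\!\left(\tfrac{\theta - \vartheta}{2}\right),
\]
where $z = re^{i\theta}$ and $z_Q = se^{i\vartheta}$. For $Q = Q_{n,m,k}$ the formulas for $z_Q$ give $s = (m-\tfrac{1}{2})2^{-n}$ and $\vartheta = (k-\tfrac{1}{2})2^{1-n}\pi$, while $z \in Q$ forces $|r-s|\leq 2^{-n-1}$ and $|\theta-\vartheta|\leq 2^{-n}\pi$. Using $rs \geq s(s - 2^{-n-1})$, this yields
\[
1 - rs \leq (1 - s^2) + s \cdot 2^{-n-1} \leq (1 - |z_Q|^2) + 2^{-n-1},
\]
and, using $|\sin x| \leq |x|$, the angular term is controlled by $4rs(2^{-n-1}\pi)^2 \lesssim 2^{-2n}$.

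The crux is comparing the dyadic scale $2^{-n}$ to $1 - |z_Q|^2$. Since $m \leq 2^n$, we have $1 - s = 1 - (m-\tfrac{1}{2})2^{-n} \geq \tfrac{1}{2}\cdot 2^{-n}$, whence $1 - |z_Q|^2 = (1-s)(1+s) \gtrsim 2^{-n}$. Plugging this back into both pieces of the $(5.4)$ decomposition gives $(1-rs)^2 \lesssim (1-|z_Q|^2)^2$ and $4rs\sin^2(\tfrac{\theta-\vartheta}{2}) \lesssim 2^{-2n} \lesssim (1-|z_Q|^2)^2$, so $|1 - z\overline{z_Q}|^2 \lesssim (1-|z_Q|^2)^2$ as needed.

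No step here presents a serious obstacle; the whole argument is mechanical once identity $(5.4)$ is in hand. If there is a subtlety worth flagging, it is the lower bound $1 - |z_Q| \gtrsim 2^{-n}$: without using that the center $z_Q$ is separated from $\partial\mathbb{D}$ by a fixed fraction of the side length of $Q$, one could only bound $|1 - z\overline{z_Q}|$ by $2^{-n}$, not by $1 - |z_Q|^2$, and the angular contribution would not line up.
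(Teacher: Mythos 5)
Your argument is correct: reducing the claim to $|1 - z\overline{z_Q}| \lesssim 1 - |z_Q|^2$ and verifying this via the identity $(5.4)$, the bounds $|r-s| \le 2^{-n-1}$, $|\theta - \vartheta| \le 2^{-n}\pi$, and the key observation that $m \le 2^n$ forces $1 - |z_Q| \ge 2^{-n-1}$, is exactly the standard proof. The paper itself does not prove this proposition but cites Stroethoff--Zheng \cite{SZ2}, where essentially the same elementary geometric estimate is carried out, so your proposal matches the intended approach.
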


\begin{proposition} There exists $R > 0$ such that $Q \subseteq D(z_Q, R)$ for every dyadic subrectangle $Q$ of $\mathbb{D}$ that has positive distance to $\partial \mathbb{D}$.  \end{proposition}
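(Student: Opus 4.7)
The plan is to reduce the geometric claim to a uniform upper bound on $|\varphi_{z_Q}(w)|$. Since a Bergman disk $D(z_Q, R)$ in $\mathbb{D}$ is precisely the sublevel set $\{w \in \mathbb{D} : |\varphi_{z_Q}(w)| \leq \tanh R\}$, exhibiting a constant $\rho < 1$ with $|\varphi_{z_Q}(w)| \leq \rho$ for every $w \in Q$, uniformly over all dyadic subrectangles $Q$ having positive distance to $\partial \mathbb{D}$, immediately produces a uniform $R$.

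Write $Q = Q_{n,m,k}$, set $s := |z_Q| = (m - \tfrac{1}{2})2^{-n}$ and $j := 2^n - m$. The positive-distance hypothesis is exactly $j \geq 1$, and then $1 - s = (j + \tfrac{1}{2})2^{-n}$. For any $w = re^{i\theta} \in Q$, we have $1 - r \in (j 2^{-n}, (j+1)2^{-n}]$ and $|\theta - \vartheta| \leq 2^{-n}\pi$, where $\vartheta$ is the angular center of $Q$. Since $j \geq 1$, the ratio $(1-r)/(1-s)$ is bounded above and below by universal constants, so $1 - |w|$ and $1 - |z_Q|$ are comparable independently of $n$ and $m$.

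Next I would apply formula (5.4) from the excerpt to decompose
\begin{equation*}
|1 - z_Q \overline{w}|^2 = (1 - rs)^2 + 4rs \sin^2\!\left(\tfrac{\theta - \vartheta}{2}\right)
\end{equation*}
and estimate each piece by $(1 - |z_Q|^2)(1 - |w|^2)$ up to a universal constant. For the radial term, the identity $1 - rs = (1-s) + s(1-r)$ together with the comparability from the previous paragraph gives $(1-rs)^2 \lesssim (1-s)^2 \asymp (1-s)(1-r)$. For the angular term, $|\theta - \vartheta| \leq 2^{-n}\pi$ yields $4rs \sin^2((\theta-\vartheta)/2) \leq \pi^2 4^{-n}$, while the lower bound $(1-s)(1-r) \geq j(j+\tfrac{1}{2}) 4^{-n} \geq \tfrac{3}{2} 4^{-n}$ (valid for every $j \geq 1$) shows this contribution is also absorbed into $(1-|z_Q|^2)(1-|w|^2)$.

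Combining the two pieces produces a universal constant $C$ with $|1 - z_Q \overline{w}|^2 \leq C (1 - |z_Q|^2)(1 - |w|^2)$, and the standard identity
\begin{equation*}
1 - |\varphi_{z_Q}(w)|^2 = \frac{(1 - |z_Q|^2)(1 - |w|^2)}{|1 - z_Q \overline{w}|^2}
\end{equation*}
then yields $|\varphi_{z_Q}(w)|^2 \leq 1 - 1/C < 1$, uniformly in $w$ and $Q$, completing the proof. The main (and essentially only) obstacle is the bookkeeping needed to check that the two natural scales, the side length $2^{-n}$ of $Q$ and the distance $j \cdot 2^{-n}$ from $Q$ to $\partial \mathbb{D}$, do not produce a blow-up; the hypothesis $j \geq 1$ is exactly what keeps these scales comparable and is used crucially in both the radial and the angular estimates.
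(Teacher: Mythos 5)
Your proof is correct: the reduction to a uniform bound $|\varphi_{z_Q}(w)|\leq\rho<1$ via the identity $1-|\varphi_{z_Q}(w)|^2=(1-|z_Q|^2)(1-|w|^2)/|1-\overline{z_Q}w|^2$, together with the estimate of $|1-\overline{z_Q}w|^2$ from $(5.4)$ using $1-|w|\asymp 1-|z_Q|\asymp (j+\tfrac12)2^{-n}$ for $j\geq 1$, is exactly the right computation, and the bookkeeping checks out. The paper does not prove this proposition itself but cites \cite{SZ2}, where essentially this same argument appears, so your write-up is a faithful self-contained version of the standard proof.
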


\begin{lemma} Let $f \in L_a^p (\mathbb{D}, dA_\gamma)$ satisfy $(5.1)$ and let $w = |f|^p$.  Then for each $\beta \in \mathcal{D}$ and each dyadic subrectangle $Q$ of $S_\beta$, we have that \begin{align} \left(\frac{1}{A_\gamma(Q)} \int_Q w \,  dA_\gamma \right)\ \left(\frac{1}{A_\gamma(Q)} \int_Q w ^{- \frac{1}{p - 1}}\,  dA_\gamma \right)^{p - 1}  \leq C \tag{5.6} \end{align} where $C$ is independent of $\beta$ and $Q$. \end{lemma}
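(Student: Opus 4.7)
The plan is to split into two cases depending on whether the dyadic subrectangle $Q$ meets $\partial \mathbb{D}$ in its closure. Only the boundary-touching case will require the full hypothesis (5.1); in the other case Lemma 5.2 alone essentially reduces (5.6) to a cancellation of exponents.

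When $Q$ has positive distance to $\partial \mathbb{D}$, I will invoke Proposition 5.4 to obtain a fixed $R > 0$ (independent of $Q$) with $Q \subseteq D(z_Q, R)$. Since $f$ satisfies (5.1), Lemma 5.2 then yields $|f(z)| \approx |f(z_Q)|$ uniformly for $z \in Q$, so
$$\frac{1}{A_\gamma(Q)} \int_Q |f|^p \, dA_\gamma \approx |f(z_Q)|^p \quad \text{and} \quad \frac{1}{A_\gamma(Q)} \int_Q |f|^{-q} \, dA_\gamma \approx |f(z_Q)|^{-q}.$$
Because $q(p-1) = p$, the product appearing in (5.6) will be comparable to $|f(z_Q)|^{p - q(p-1)} = 1$, giving the desired uniform bound with no use of (5.1) beyond what was already needed for Lemma 5.2.

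It remains to handle those $Q$ whose closure meets $\partial \mathbb{D}$. Any such $Q$ must itself be a Carleson square $S_{\beta'}$ for some $\beta' \geq \beta$, so I will denote its center by $z_Q$ and set $h' = 2^{-d(\beta')}$. A direct computation via (5.4) shows that both the radial quantity $1 - |z||z_Q|$ and the angular contribution $4|z||z_Q| \sin^2((\arg z - \arg z_Q)/2)$ are of size $\approx h'^2$ for every $z \in Q$, so $|1 - z\overline{z_Q}| \approx h' \approx 1 - |z_Q|^2$ and hence $|k_{z_Q}(z)|^p \approx (1-|z_Q|^2)^{-p(2+\gamma)/2}$ uniformly for $z \in Q$. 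Consequently
$$\int_Q |f|^p \, dA_\gamma \lesssim (1-|z_Q|^2)^{p(2+\gamma)/2} \, B_\gamma(|f k_{z_Q}^{1-2/p}|^p)(z_Q),$$
and analogously for $\int_Q |f|^{-q} \, dA_\gamma$. Taking the product of these two estimates to powers $1/p$ and $1/q$, applying (5.1) to the product of the Berezin transforms, and invoking the standard estimate $A_\gamma(S_{\beta'}) \approx (1-|z_Q|^2)^{2+\gamma}$, inequality (5.6) will follow.

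The main technical point I expect to verify is the uniform comparison $|k_{z_Q}(z)|^p \approx (1-|z_Q|^2)^{-p(2+\gamma)/2}$ throughout the full Carleson square in the second case: since $Q$ reaches all the way to $\partial \mathbb{D}$, where $k_{z_Q}$ a priori could blow up, one has to use the decomposition (5.4) carefully to see that both the radial and the angular contribution to $|1-z\overline{z_Q}|$ remain bounded below by a universal multiple of $h'$. Once this is in hand the rest of the argument is mechanical.
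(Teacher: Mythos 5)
Your proposal is correct and takes essentially the same route as the paper: interior rectangles are handled via Proposition 5.4 and Lemma 5.2, and boundary-touching rectangles (which are Carleson squares) via a lower bound for the normalized kernel on the square together with $A_\gamma(S_{\beta'}) \approx (1-|z_Q|^2)^{2+\gamma}$ and hypothesis $(5.1)$ evaluated at a point whose distance to $\partial\mathbb{D}$ is comparable to the depth of the square (the paper centers the kernel at $c_{\beta'}$, you at the square's own center). One harmless misstatement: for $z \in S_{\beta'}$ the radial term satisfies $1-|z||z_Q| \approx h'$ (not $h'^2$) and the angular term in $(5.4)$ is only $\lesssim h'^2$, but this still yields $|1-z\overline{z_Q}| \approx h'$, which is all your argument actually uses.
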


\begin{proof} Clearly it is enough to show that there exists $C > 0$ independent of $\beta$ and $Q$ where \begin{align} \left(\frac{1}{A_\gamma(Q)} \int_Q |f|^p dA_\gamma \right)^\frac{1}{p} \left(\frac{1}{A_\gamma(Q)} \int_Q |f|^{-q} dA_\gamma \right)^\frac{1}{q }  \leq C.  \nonumber \end{align}

\medskip
\noindent First assume that $\beta = o$, so that $S_\beta = \mathbb{D}.$  If $Q = \mathbb{D},$ then this follows immediately from $(5.1)$.  If $d(Q, \partial \mathbb{D}) > 0$ then the result immediately follows from Proposition $5.4$ and Lemma $5.2$.  If $d(Q, \partial \mathbb{D}) = 0$ then the Lemma follows from Proposition $5.3$ and the fact that $A_\gamma(Q) =  2^{3 + 2\gamma} |z_Q| ^{1 + \gamma} (1 - |z_Q|)^{2 + \gamma}$ (see \cite{SZ2}.)

 Now assume that $\beta \neq o$.  Note that if we dyadically quadrisect $S_\beta$ any number of times, then an easy induction shows that we either obtain one of three types of sets: $S_{\beta'}$ where $\beta' \geq \beta$, the left (or right) angular half of some $T_{\beta'}$, or repeated quadrisection of the left (or right) angular half of some $T_{\beta'}$.  In particular, this tells us that any dyadic subrectangle $Q$ of $S_\beta$ is either $S_{\beta'}$ for some $\beta' \geq \beta$ or is contained in the hyperbolic disk $D(c_{\beta'}, R)$ where $\beta' \geq \beta$ and $R$ is the constant from $(5.5)$.

\medskip
In the latter case, the Lemma follows immediately from Lemma $5.2$.  To finish the proof, we will show that Lemma $5.5$ is true for each $S_\beta$. If $z \in S_\beta$   with $z = re^{i\theta}$  where $0 \leq \theta < 2\pi$, then by the definition of $S_\beta$ we have that  $| \theta - \vartheta| \leq 2^{- d(\beta)}$ where $c_\beta = s e^{i \vartheta}$ with $0 \leq \vartheta < 2\pi$. Thus, since $(1 - |c_\beta|^2) \approx 2^{-d(\beta)}$, we have from $(5.4)$ that \begin{align} |k_{c_{\beta}} (z)|  = \frac{(1 - |c_\beta|^2)^\frac{2 + \gamma}{2} }{|1 - z\overline{c_\beta} |^{2 + \gamma}}  \gtrsim   \frac{1}{ A_\gamma(S_\beta) ^{1/2}} \nonumber \end{align}
 \noindent which tells us that \begin{align} \{B_\gamma(|f k_{c_\beta} ^{1 - \frac{2}{p}}|^p) (c_\beta)\}^\frac{1}{p}  & = \left(\int_\mathbb{D} |f k_{c_\beta}| ^p \, dA_\gamma \right)^\frac{1}{p} \nonumber \\ & \geq \left(\int_{S_\beta} |f k_{c_\beta}| ^p \, dA_\gamma \right)^\frac{1}{p} \nonumber \\ & \gtrsim  \frac{1}{ A_\gamma(S_\beta) ^{1/2}} \left(\int_{S_\beta} |f | ^p \, dA_\gamma \right)^\frac{1}{p}. \nonumber \end{align}

\noindent Switching $f$ with $\frac{1}{f}$, and switching $p$ with $q$, now completes the proof. \end{proof}

\medskip
The proof of the following is a standard application of Lemma $5.5$ (and is very similar to the proof of Lemma $4.6$ of \cite{SZ2}). The proof will therefore be omitted. Note that for the rest of this section $\gamma > -1$ will be fixed and for a measurable set $E \subseteq \mathbb{D}$ we will use the notation $w(E) = \int_E w \, dA_\gamma$.

\begin{lemma} Let $f \in L_a^p (\mathbb{D}, dA_\gamma)$ satisfy $(5.1)$ and let $C_1$ be the constant in Lemma $5.5$. If $w =  |f|^p $ and if $\delta = 1 - \frac{1}{2^p C_1} $, then we have  \begin{align} w(E) \leq \delta w(Q) \nonumber \end{align} whenever $E$ is a subset of a dyadic subrectangle $Q$ of any $S_{\beta}$ where $A_\gamma (E) \leq \frac{1}{2} A_\gamma (Q)$. \end{lemma}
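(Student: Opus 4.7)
The plan is to deduce this $A_\infty$-type property from the $B_{p,\gamma}$-type estimate provided by Lemma $5.5$ via the standard Hölder-duality argument used in classical $A_p$ theory, adapted to work uniformly over dyadic subrectangles of Carleson squares $S_\beta$.

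The first step is to rephrase the conclusion in complementary form. Writing $F = Q \setminus E$, the hypothesis $A_\gamma(E) \leq \tfrac{1}{2} A_\gamma(Q)$ gives $A_\gamma(F) \geq \tfrac{1}{2} A_\gamma(Q)$, and the conclusion $w(E) \leq \delta w(Q)$ is equivalent to $w(F) \geq (1-\delta) w(Q)$. So it suffices to prove the pointwise-type lower bound
\begin{align}
\frac{w(F)}{w(Q)} \;\gtrsim\; \left(\frac{A_\gamma(F)}{A_\gamma(Q)}\right)^p \nonumber
\end{align}
for every measurable $F \subseteq Q$, with constant $1/C_1$ independent of $\beta$, $Q$, and $F$.

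To establish this, I would apply Hölder's inequality with exponents $p$ and $q = p/(p-1)$ to the trivial identity $A_\gamma(F) = \int_F w^{1/p} w^{-1/p} \, dA_\gamma$, which yields
\begin{align}
A_\gamma(F)^p \;\leq\; w(F) \left(\int_F w^{-\frac{1}{p-1}} \, dA_\gamma \right)^{p-1}. \nonumber
\end{align}
Since $F \subseteq Q$, we can dominate the second factor by the integral over $Q$, and then invoke Lemma $5.5$ to replace that integral by a quantity involving $w(Q)$: rearranging $(5.6)$ gives
\begin{align}
\int_Q w^{-\frac{1}{p-1}} \, dA_\gamma \;\leq\; C_1^{\frac{1}{p-1}} \frac{A_\gamma(Q)^{\frac{p}{p-1}}}{w(Q)^{\frac{1}{p-1}}}. \nonumber
\end{align}
Plugging this back in and isolating $w(F)/w(Q)$ yields $w(F)/w(Q) \geq A_\gamma(F)^p / (C_1 A_\gamma(Q)^p)$. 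Specializing to $F = Q \setminus E$ with $A_\gamma(F) \geq \tfrac{1}{2} A_\gamma(Q)$ gives $w(Q \setminus E) \geq (2^p C_1)^{-1} w(Q)$, and hence $w(E) \leq \delta w(Q)$ for $\delta = 1 - 2^{-p}/C_1$, as required.

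There is no serious obstacle here; this is a routine Hölder–duality argument. The only conceptual point is that uniformity of the constant $\delta$ across all Carleson squares $S_\beta$ and all their dyadic subrectangles $Q$ is not automatic from any generic $A_p$ statement but is exactly the content of Lemma $5.5$ in the setting of the Bergman tree, so the hypothesis is tailored to make this step work. The exponent bookkeeping between $p$, $q$, and $1/(p-1)$ is the only place to watch one's step.
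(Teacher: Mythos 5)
Your argument is correct and is exactly the standard Hölder--duality deduction of the $A_\infty$-type property from the $A_p$-type bound $(5.6)$; the paper omits the proof of this lemma altogether, citing it as "a standard application of Lemma $5.5$" in the spirit of Lemma $4.6$ of Stroethoff--Zheng, and what you wrote is that standard argument with the constants ($\delta = 1 - 2^{-p}C_1^{-1}$) worked out correctly and uniformly over all $\beta$ and all dyadic subrectangles $Q$ of $S_\beta$.
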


Now, suppose that we have a dyadic subrectangle $Q$ of $S_\beta$ for some $\beta \in \mathcal{D}$.  If $Q$ is formed from $k \geq 1$ repeated dyadic quadrisections of $S_\beta$, then we define the double $2Q$ of $Q$ to be the unique dyadic subrectangle of $S_\beta$ formed by $k - 1$ repeated dyadic quadrisections of $S_\beta$ that also contains $Q$.
We will now establish a doubling property that extends Proposition $4.9$ of \cite{SZ2}.

\begin{lemma}
For any $\beta \in \mathcal{D}$ and any dyadic subrectangle $Q \subsetneqq S_\beta$, we have that $A_\gamma(2Q) \lesssim A_\gamma(Q)$. \nonumber \end{lemma}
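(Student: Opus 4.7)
My plan is to compute $A_\gamma(Q)$ and $A_\gamma(2Q)$ explicitly in polar coordinates and reduce the resulting ratio to a one-dimensional doubling estimate for the canonical weight $u^\gamma\,du$ on $[0,1]$. Since every dyadic subrectangle of $S_\beta$ arises by iterated quadrisection, the radial and angular widths of $Q$ agree; writing $Q = \{re^{i\theta}: a \le r < a+\delta,\ \theta_1 \le \theta < \theta_1+\delta\}$ and using the antiderivative $\int(1-r^2)^\gamma r\,dr = -(1-r^2)^{\gamma+1}/(2(\gamma+1))$ gives
$$A_\gamma(Q) = \frac{c_\gamma\,\delta}{2(\gamma+1)}\bigl[F(a)-F(a+\delta)\bigr], \qquad F(r):=(1-r^2)^{\gamma+1}.$$
The parent $2Q$ has angular width $2\delta$ and radial range either $[a,a+2\delta]$ (Case A, where $Q$ is the inner radial half of $2Q$) or $[a-\delta,a+\delta]$ (Case B, where $Q$ is the outer radial half). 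The same formula yields $A_\gamma(2Q)$, so $A_\gamma(2Q)/A_\gamma(Q)$ is $2$ times a quotient of $F$-differences.

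The substitution $u = 1-r^2$ converts the radial factor of $A_\gamma$ into the measure $u^\gamma\,du$ on $[0,1]$, which is doubling because $\gamma > -1$. Under this map the $r$-interval of $Q$ becomes a $u$-interval of length $\delta(2a+\delta)$, while that of $2Q$ becomes a $u$-interval of length $4\delta(a+\delta)$ in Case A or $4a\delta$ in Case B; elementary algebra shows
$$\frac{4(a+\delta)}{2a+\delta} \in (2,4] \qquad \text{and} \qquad \frac{4a}{2a+\delta} \in [4/3,2),$$
so in both cases the $u$-interval of $2Q$ has length at most four times that of $Q$, and the two $u$-intervals share exactly one endpoint ($1-a^2$ in Case A, $1-(a+\delta)^2$ in Case B).

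It now suffices to bound $R = (q^{\gamma+1}-p_2^{\gamma+1})/(q^{\gamma+1}-p_1^{\gamma+1})$ in Case A, where $0 \le p_2 \le p_1 \le q \le 1$ satisfy $q - p_2 \le 4(q - p_1)$, together with its Case B analogue. In Case A, the normalization $x = p_1/q$, $y = p_2/q$ expresses $R = (1-y^{\gamma+1})/(1-x^{\gamma+1})$ as a continuous function on the compact region $\{(x,y)\in[0,1]^2: y \le x,\ y \ge \max(0, 4x-3)\}$; the only delicate corner is $(1,1)$, where a Taylor expansion (equivalently, L'H\^opital) along $y = 4x-3$ gives the finite limit $4$. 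Hence $R \le K_\gamma$ for some constant depending only on $\gamma$, and the Case B analogue is handled identically. Combining with the factor $2$ yields $A_\gamma(2Q) \le 2K_\gamma\, A_\gamma(Q)$.

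The principal subtlety is that when $Q$ or $2Q$ touches $\partial\mathbb{D}$, the weight $(1-|z|^2)^\gamma$ is singular for $\gamma<0$. The substitution $u=1-r^2$ cleanly desingularizes this: the singularity is moved to $u=0$, where the one-dimensional measure $u^\gamma\,du$ is uniformly doubling on $[0,1]$, so the boundary case requires no special treatment.
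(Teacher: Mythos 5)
Your proof is correct, but it takes a genuinely different route from the paper's. The paper handles the case $\beta = o$ by citing Proposition 4.9 of \cite{SZ2}, and for $d(\beta) \geq 1$ it argues structurally: repeated quadrisection of $S_\beta$ yields either a Carleson square $S_{\beta'}$ with $\beta' \geq \beta$ or a rectangle contained in an angular half of some $T_{\beta'}$, and doubling then follows case by case from $A_\gamma(S_{\beta'}) \approx A_\gamma(T_{\beta'}) \approx 2^{-d(\beta')(2+\gamma)}$ (the weight $(1-|z|^2)^\gamma$ being essentially constant on $T_{\beta'}$). You instead compute $A_\gamma$ of an arbitrary dyadic subrectangle exactly, factor out the angular direction (which contributes exactly the factor $2$), and reduce the radial ratio, via $u = 1-r^2$, to a comparison of $\int u^\gamma\,du$ over nested intervals sharing an endpoint with length ratio at most $4$, i.e. to doubling of the one-dimensional power weight, valid precisely because $\gamma > -1$. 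Your $u$-length computations ($\delta(2a+\delta)$ versus $4\delta(a+\delta)$ or $4a\delta$) and the boundedness of the normalized ratio $(1-y^{\gamma+1})/(1-x^{\gamma+1})$ on the constraint region (monotonicity in $y$ reduces it to the boundary curve, where the expansion at the corner $(1,1)$ gives the limit $4$) are sound, and your argument treats $\beta = o$, $d(\beta) \geq 1$, and rectangles touching $\partial\mathbb{D}$ uniformly. What each approach buys: the paper's proof is short because it recycles the structural classification already used in Lemma 5.5 and outsources the $\beta = o$ case to \cite{SZ2}; yours is self-contained, produces an explicit constant depending only on $\gamma$, and needs no case split or external citation. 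One cosmetic caveat: with the paper's normalization the angular width of a dyadic subrectangle is a fixed multiple ($2\pi$) of the radial width rather than literally equal to it, but since you only use that $2Q$ has exactly twice the angular width of $Q$, this changes nothing.
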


\begin{proof} If $Q $ is a dyadic subrectangle of $\mathbb{D}$, then this was proven in Proposition $4.9$ of \cite{SZ2}, so assume that $Q$ is a dyadic subrectangle of $S_\beta$ with $d(\beta) \geq 1$.

\noindent
As stated in the proof of Lemma $5.5$, repeated quadrisection of $S_\beta$ gives us one of the following three sets: $S_{\beta'}$ where $\beta' \geq \beta$, the left (or right) angular half of $T_{\beta'}$, or the repeated quadrisection of the left (or right) angular half of $T_{\beta'}$. However, since $A_\gamma (S_\beta) \approx A_\gamma(T_\beta) \approx 2^{- d(\beta)(2 + \gamma)}$, it is easy to see that $A_\gamma (2Q) \leq C A_\gamma (Q)$ for either of these cases, where $C > 0$ is independent of $Q$.
\end{proof}

\begin{lemma} Let $f \in L_a ^p(\mathbb{D}, dA_\gamma)$ satisfy $(5.1)$. Also, let $\widetilde{C} > 0$ be the constant in Lemma $5.7$ and let $\delta$ be the constant from Lemma $5.6$. If $\beta \in \mathcal{D},$ then for any dyadic subrectangle $Q$ of $S_\beta$ (including $S_\beta$ itself), we have that \begin{align} \left(\frac{1}{A_\gamma(Q)} \int_{Q} w^{1 + \epsilon} \, dA_\gamma \right) ^ \frac{1}{1 +  \epsilon} \leq \left(1 + \frac{(2\widetilde{C})^\epsilon}{1 - (2 \widetilde{C})^\epsilon \delta}\right) ^\frac{1}{1+ \epsilon} \frac{1}{A_\gamma(Q)} \int_{Q} w \, dA_\gamma  \nonumber \end{align} whenever $(2 \widetilde{C})^\epsilon \delta < 1$. \end{lemma}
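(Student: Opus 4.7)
My plan is to use a Calder\'on--Zygmund stopping-time argument on the dyadic subrectangles of $S_\beta$, combined with the layer-cake formula, to convert the self-improvement information packaged in Lemmas $5.6$ and $5.7$ into the claimed reverse H\"{o}lder inequality. Fix a dyadic subrectangle $Q$ of $S_\beta$ and write $w_R = \frac{1}{A_\gamma(R)}\int_R w\,dA_\gamma$ for the $A_\gamma$-average on any subrectangle $R$. Set $\lambda_0 = w_Q$. For each $\lambda \geq \lambda_0$ perform a stopping-time selection: let $\{Q_j^\lambda\}_j$ be the maximal dyadic subrectangles of $Q$ with $w_{Q_j^\lambda} > \lambda$, and set $\Omega_\lambda = \bigsqcup_j Q_j^\lambda$. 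Because $\lambda \geq w_Q$, the rectangle $Q$ itself is never selected, so each $Q_j^\lambda$ is a proper subrectangle of $S_\beta$ with a well-defined parent $2Q_j^\lambda$ whose average satisfies $w_{2Q_j^\lambda} \leq \lambda$. The doubling estimate of Lemma $5.7$ then gives $w_{Q_j^\lambda} \leq \widetilde{C}\lambda$, while dyadic differentiation with respect to $dA_\gamma$ yields $w(z) \leq \lambda$ for $A_\gamma$-a.e.\ $z \in Q \setminus \Omega_\lambda$.

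The key self-improvement step is the geometric decay
$$w(\Omega_{2\widetilde{C}\lambda}) \leq \delta\, w(\Omega_\lambda) \qquad (\lambda \geq \lambda_0).$$
To derive it, set $\mu = 2\widetilde{C}\lambda$ and observe that $\Omega_\mu \cap Q_j^\lambda$ is a union of dyadic subrectangles of $Q_j^\lambda$ each having $w$-average exceeding $\mu$. Therefore
$$\mu\,A_\gamma(\Omega_\mu \cap Q_j^\lambda) \leq w(\Omega_\mu \cap Q_j^\lambda) \leq w(Q_j^\lambda) \leq \widetilde{C}\lambda\,A_\gamma(Q_j^\lambda) = \tfrac{\mu}{2}\,A_\gamma(Q_j^\lambda),$$
so that $A_\gamma(\Omega_\mu \cap Q_j^\lambda) \leq \tfrac{1}{2} A_\gamma(Q_j^\lambda)$. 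Applying Lemma $5.6$ with $E = \Omega_\mu \cap Q_j^\lambda$ inside $Q_j^\lambda$ gives $w(\Omega_\mu \cap Q_j^\lambda) \leq \delta\,w(Q_j^\lambda)$, and summing over the disjoint family $\{Q_j^\lambda\}$ produces the decay.

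To finish, I will invoke the layer-cake identity
$$\int_Q w^{1+\epsilon}\,dA_\gamma = \epsilon \int_0^\infty s^{\epsilon-1}\, w\bigl(\{z \in Q : w(z) > s\}\bigr)\,ds,$$
bounding the integrand by $w(Q)$ on $[0,\lambda_0]$ (which contributes exactly $w(Q)\lambda_0^\epsilon = A_\gamma(Q)\,w_Q^{1+\epsilon}$) and by $w(\Omega_s)$ on $(\lambda_0,\infty)$. The substitution $s = 2\widetilde{C}t$ together with the decay inequality absorbs the tail $J := \epsilon \int_{\lambda_0}^\infty s^{\epsilon-1}\, w(\Omega_s)\,ds$ into $(2\widetilde{C})^\epsilon \delta\,J$ plus an explicit boundary term of size $w(Q)\lambda_0^\epsilon \bigl((2\widetilde{C})^\epsilon - 1\bigr)$. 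Under the standing hypothesis $(2\widetilde{C})^\epsilon \delta < 1$, solving for $J$ and assembling the pieces yields the multiplicative constant $1 + \frac{(2\widetilde{C})^\epsilon}{1-(2\widetilde{C})^\epsilon \delta}$ in front of $A_\gamma(Q)\,w_Q^{1+\epsilon}$ (with slack, since my estimate gives a numerator of $(2\widetilde{C})^\epsilon - 1$); taking $(1+\epsilon)$-th roots produces the stated inequality.

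The main obstacle is not in any single estimate but in verifying that the classical dyadic stopping-time machinery respects both the weighted measure $dA_\gamma$ and the particular dyadic filtration on $S_\beta$ rather than on $\mathbb{D}$. This is precisely what Lemma $5.7$ secures: for every proper subrectangle of $S_\beta$ its dyadic parent lies inside $S_\beta$ with comparable $A_\gamma$-mass, so parents, maximality, and dyadic differentiation all behave exactly as in the Euclidean case. Once this is in hand, the remainder of the argument is bookkeeping of the geometric series.
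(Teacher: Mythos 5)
Your argument is correct and is essentially the same as the paper's: the paper proves this lemma by citing the proof of Theorem 7.4 in Duoandikoetxea, which is precisely the stopping-time decomposition plus layer-cake computation you carry out, with Lemma 5.7 supplying the parent's doubling bound $w_{Q_j^\lambda}\leq \widetilde{C}\lambda$ and Lemma 5.6 supplying the decay $w(\Omega_{2\widetilde{C}\lambda})\leq \delta\, w(\Omega_\lambda)$. The only points you gloss over are standard ones also implicit in that reference: dyadic differentiation with respect to $dA_\gamma$, and the a priori finiteness of the tail integral $J$ (handled by truncating the $s$-integral at a finite height and letting it tend to infinity), so your constant, which is in fact slightly better than the stated one, goes through.
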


\begin{proof} Using Lemmas $5.6$ and $5.7$, the proof is identical to the proof of Theorem $7.4$ in \cite{D}. \end{proof}

\begin{lemma} Let $f \in L_a ^p (\mathbb{D}, dA_\gamma)$ satisfy $(5.1)$ and let $w = |f|^p$.  Then for any $\beta \in \mathcal{D}$, any $E \subset S_\beta$, and  small enough $\epsilon$, we have  \begin{align} \int_{E} w^{ 1 + \epsilon} \, dA_\gamma \leq C \left( \int_{S_\beta} w ^{ 1 + \epsilon} \, dA_\gamma \right) \left( \frac{A_\gamma(E)}{A_\gamma(S_\beta)}\right)^ {\frac{\epsilon}{1 + \epsilon}} \nonumber \end{align}  where $C$ is independent of $E$ and $\beta$.    \end{lemma}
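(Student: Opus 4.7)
The plan is to derive the desired inequality from the reverse Hölder estimate of Lemma 5.8 by a short Hölder-interpolation argument on $E$. Applying Lemma 5.8 with $Q = S_\beta$ (itself a dyadic subrectangle of $S_\beta$) yields some $\epsilon' > 0$ and a constant $C_1 > 0$, both uniform in $\beta$, such that
\begin{equation*}
\left(\frac{1}{A_\gamma(S_\beta)}\int_{S_\beta} w^{1+\epsilon'} \, dA_\gamma\right)^{1/(1+\epsilon')} \leq \frac{C_1}{A_\gamma(S_\beta)} \int_{S_\beta} w \, dA_\gamma.
\end{equation*}
Uniformity of $\epsilon'$ and $C_1$ follows because the constants entering Lemma 5.8 (through Lemmas 5.5, 5.6, and 5.7) are themselves uniform in $\beta$.

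For $\epsilon \in (0, \epsilon')$ to be pinned down below, I would apply H\"older's inequality on $E$ to $w^{1+\epsilon}\cdot 1$ with conjugate exponents $(1+\epsilon')/(1+\epsilon)$ and $(1+\epsilon')/(\epsilon'-\epsilon)$, giving
\begin{equation*}
\int_E w^{1+\epsilon} \, dA_\gamma \leq \left(\int_E w^{1+\epsilon'} \, dA_\gamma\right)^{(1+\epsilon)/(1+\epsilon')} A_\gamma(E)^{(\epsilon'-\epsilon)/(1+\epsilon')}.
\end{equation*}
Bounding $\int_E w^{1+\epsilon'} \leq \int_{S_\beta} w^{1+\epsilon'}$, feeding in the reverse H\"older estimate raised to the power $(1+\epsilon)/(1+\epsilon')$, and then invoking the elementary Hölder bound $\bigl(\int_{S_\beta} w\bigr)^{1+\epsilon} \leq A_\gamma(S_\beta)^{\epsilon} \int_{S_\beta} w^{1+\epsilon}$, a short computation with the $A_\gamma(S_\beta)$-exponents shows that they telescope to $(\epsilon - \epsilon')/(1+\epsilon')$, yielding
\begin{equation*}
\int_E w^{1+\epsilon} \, dA_\gamma \leq C \left(\frac{A_\gamma(E)}{A_\gamma(S_\beta)}\right)^{(\epsilon'-\epsilon)/(1+\epsilon')} \int_{S_\beta} w^{1+\epsilon} \, dA_\gamma,
\end{equation*}
with $C$ independent of $E$ and $\beta$.

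To finish, I would choose $\epsilon > 0$ small enough that $\epsilon' \geq \epsilon(2+\epsilon)$, which is equivalent to $(\epsilon'-\epsilon)/(1+\epsilon') \geq \epsilon/(1+\epsilon)$. Since $A_\gamma(E)/A_\gamma(S_\beta) \leq 1$, the factor on the right may then be replaced by the smaller $(A_\gamma(E)/A_\gamma(S_\beta))^{\epsilon/(1+\epsilon)}$, delivering the claimed inequality. I do not anticipate a substantial obstacle; the only care required is the bookkeeping of the $A_\gamma(S_\beta)$-exponents and the compatibility of $\epsilon$ with the $\epsilon'$ supplied by Lemma 5.8 so that the target exponent $\epsilon/(1+\epsilon)$ is reached.
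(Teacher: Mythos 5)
Your proposal is correct, but it takes a genuinely different route from the paper. The paper follows Corollary 7.6 of \cite{D}: using $(5.6)$--$(5.9)$ it first shows that $w^{1+\epsilon_1}$ itself satisfies the conclusion of Lemma $5.5$ with a uniformly controlled characteristic $C_{1,\epsilon_1}$, hence Lemma $5.6$ holds for $w^{1+\epsilon_1}$ with some $\delta_{\epsilon_1}<1$, and it then applies Lemma $5.8$ a \emph{second} time, now to the weight $w^{1+\epsilon_1}$ with $Q=S_\beta$, obtaining control of $\int_{S_\beta}w^{(1+\epsilon)^2}\,dA_\gamma$; a single H\"older inequality on $E$ with exponents $1+\epsilon$ and $(1+\epsilon)/\epsilon$ then lands exactly on the exponent $\epsilon/(1+\epsilon)$. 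You instead invoke Lemma $5.8$ only once (for $w$ on $S_\beta$), interpolate on $E$ between $w^{1+\epsilon}$ and $w^{1+\epsilon'}$ with $\epsilon<\epsilon'$, and use $A_\gamma(E)/A_\gamma(S_\beta)\le 1$ to pass from the exponent $(\epsilon'-\epsilon)/(1+\epsilon')$ down to $\epsilon/(1+\epsilon)$ once $\epsilon(2+\epsilon)\le\epsilon'$; your exponent bookkeeping is right, since $\epsilon-\epsilon'(1+\epsilon)/(1+\epsilon')=(\epsilon-\epsilon')/(1+\epsilon')$, and all constants are uniform in $\beta$ because they come from a single application of Lemma $5.8$. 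What the paper's self-improvement route buys is the estimate at the exponent tied directly to the reverse-H\"older exponent of $w$ itself; what your route buys is brevity, avoiding the careful tracking of the characteristic of $w^{1+\epsilon}$, at the harmless cost of shrinking $\epsilon$ relative to the $\epsilon'$ supplied by Lemma $5.8$, which the statement's ``small enough $\epsilon$'' permits. One wording slip: since the ratio is at most $1$, the factor with the smaller exponent $\epsilon/(1+\epsilon)$ is the \emph{larger} one, which is exactly why the replacement preserves the upper bound.
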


\begin{proof}
The proof is similar to the proof of Corollary $7.6$ of \cite{D}, but requires a somewhat careful tracking of the constants involved.  Let $\beta \in \mathcal{D}$ and let $Q$ be any dyadic subrectangle of $S_\beta$.  By Lemma $5.8$, \begin{align} \left(\frac{1}{A_\gamma(Q)} \int_{Q} w^{1 + \epsilon_1} \, dA_\gamma \right) ^ \frac{1}{1 +  \epsilon_1} \leq \left(1 + \frac{(2\tilde{C})^{\epsilon_1}}{1 - (2 \widetilde{C})^{\epsilon_1} \delta}\right)^\frac{1}{1 + \epsilon_1}   \frac{1}{A_\gamma(Q)} \int_{Q} w \, dA_\gamma  \tag{5.7}\end{align} whenever $(2 \widetilde{C})^{\epsilon_1} \delta < 1$ where $\delta = 1 - \frac{1}{2^p C_1}$ and $C_1$ is the constant in Lemma $5.5$.

\medskip
  Similarly, since $w^{- \frac{1}{p - 1}} $ satisfies the conclusion of Lemma $5.5$ with A${}_q$ characteristic $C_1 ^ {q - 1}$, we have that \begin{align} \left(\frac{1}{A_\gamma(Q)} \int_{Q} w^{- (1 + \epsilon_1) ( \frac{1}{p - 1}) } \, \right. & dA_\gamma \Biggr) ^ \frac{1}{1 +  \epsilon_1} \nonumber \\ &  \leq \left(1 + \frac{(2\tilde{C})^{\epsilon_1}}{1 - (2 \widetilde{C} )^{\epsilon_1} \delta'}\right)^{\frac{1}{1 + \epsilon_1}}   \frac{1}{A_\gamma(Q)} \int_{Q} w^{- \frac{1}{p - 1}} \, dA_\gamma \tag{5.8} \end{align} whenever $(2 \widetilde{C})^{\epsilon_1} \delta' < 1$ where $\delta' = 1 - \frac{1}{2^q C_1 ^{q-1}}$.

\medskip
 Combining $(5.6), (5.7)$, and  $(5.8)$, we have that \begin{align}  \left( \frac{1}{A_\gamma(Q)} \int_{Q} w^{1 + \epsilon_1} \, dA_\gamma \right ) & \left( \frac{1}{A_\gamma(Q)} \int_{Q} w^{-(1 + \epsilon_1)(\frac{1}{p - 1})}  \, dA_\gamma \right)^{p - 1} \nonumber \\ & \leq   C_1 ^{1 + {\epsilon_1}}   \left(1 + \frac{(2\tilde{C})^{\epsilon_1}}{1 - (2 \widetilde{C})^{\epsilon_1} \delta}\right) \left(1 + \frac{(2\tilde{C})^{\epsilon_1}}{1 - (2 \widetilde{C} )^{\epsilon_1} \delta'}\right)^{p-1} \tag{5.9} \end{align} which means that $w^{1 + \epsilon_1}$ satisfies the conclusion of Lemma $5.5$  (for small enough ${\epsilon_1}$) with A${}_p$ characteristic  \begin{align} C_{1, {\epsilon_1}}  = C_1 ^{1 + {\epsilon_1}}   \left(1 + \frac{(2\tilde{C})^{\epsilon_1}}{1 - (2 \widetilde{C})^{\epsilon_1} \delta}\right)  \left(1 + \frac{(2\tilde{C})^{\epsilon_1}}{1 - (2 \widetilde{C} )^{\epsilon_1} \delta'}\right)^{p-1}. \nonumber \end{align}

 Moreover, $(5.9)$ implies that Lemma $5.6$ holds for $w^{1 + \epsilon_1}$ with constant $\delta_{\epsilon_1} = 1 - \frac{1}{2^p C_{1, {\epsilon_1}}}  $, and so another application of Lemma $5.8$ with $Q = S_\beta$ gives us that \begin{align} \left(\frac{1 }{A_\gamma (S_\beta)} \int_{S_\beta}      w ^{(1 + \epsilon_1)(1 + \epsilon_2)} \, \right.  & dA_\gamma   \Biggr) ^\frac{1}{1 + \epsilon_2} \nonumber  \\ & \leq \left(1 + \frac{(2 \widetilde{C} )^{\epsilon_2 }}{1 - (2 \widetilde{C})^{\epsilon_2} \delta_{\epsilon_1}}  \right)^\frac{1}{1 + \epsilon_2} \frac{ 1 }{A_\gamma (S_\beta)} \int_{S_\beta} w^{1 + \epsilon_1}  \, dA_\gamma \tag{5.10}\end{align} so long as $\epsilon_2 > 0$ is chosen small enough to make $(2 \widetilde{C} )^{\epsilon_2} \delta_{\epsilon_1} < 1$.

\medskip
Finally, setting $\epsilon = \epsilon_2 = \epsilon_1  $ where $\epsilon$ is chosen small enough and using $(5.10)$ and H\"{o}lder's inequality, we have \begin{align}   w^{1 + \epsilon} (E)  & = \int_{S_\beta} \chi_E w^{1 + \epsilon} \,  dA_\gamma \nonumber \\ & \leq \left(w^{(1 + \epsilon)(1 + \epsilon)} (S_\beta) \right)^{\frac{1}{1 + \epsilon}} A_\gamma (E) ^{\frac{\epsilon}{1 + \epsilon}} \nonumber \\ &  \leq C  w^{1 + \epsilon} (S_\beta) \left(\frac{A_\gamma(E)}{A_\gamma(S_\beta)}\right)^{\frac{\epsilon}{1 + \epsilon}} \nonumber \end{align} \end{proof}

\medskip
 We may now complete the proof of Theorem $5.1$.  If $\beta \in \mathcal{D}$ with $\beta = (n, k)$, then define $\widetilde{S_\beta}$ to be \begin{align} \widetilde{S_\beta} = S_{(n, k-1)} \cup S_{(n, k)} \cup S_{(n, k + 1)}. \nonumber \end{align} Fix $u \in \mathbb{D}$ and pick $\beta \in \mathcal{D}$ such that $u \in T_\beta$. Because of Lemma $5.8$, we may assume that $d(\beta) \geq 2$. For any $o < \eta \leq \beta$, let $\widetilde{\eta }$ be the parent of $\eta$. Then by $(5.4)$ and the definition of $  \widetilde{S_\eta}$,  we have that \begin{align} \underset{z \in \mathbb{D} \backslash \widetilde{S_{\eta}} }{\sup}   |k_u (z)|^2 \lesssim 2^{-d(\beta) \left(2 + \gamma \right)} 2^{2 d(\eta) (2 + \gamma)} \lesssim \frac{1}{A_\gamma(\widetilde{S_{\widetilde{\eta}}})} 2 ^{- (d(\beta) - d(\eta)) \left(2 + \gamma\right)}. \tag{5.11} \end{align}

\noindent Using $(5.11)$ and the fact that \begin{align} \mathbb{D} = \left(\bigcup_{o < \eta \leq \beta} \widetilde{S_{\widetilde{\eta}}} \backslash \widetilde{S_\eta}\right) \cup \widetilde{S_\beta}, \nonumber \end{align}  we have that \begin{align} \{B_\gamma (|f k_u ^{ 1- \frac{2}{p }}|^{p + \epsilon_1})  (u) & \}^\frac{1}{p + \epsilon_1} =  \left(\int_\mathbb{D} |f k_u ^{ 1- \frac{2}{p }}|^{p + \epsilon_1} |k_u|^2 \, dA_\gamma \right)^\frac{1}{p + \epsilon_1}  \nonumber \\ & \leq \sum_{o < \eta \leq   \beta}  2^{-d(\beta) \left(\frac{2 + \gamma}{2} \right)\left(1 - \frac{2}{p}\right)} 2^{ d(\eta) (2 + \gamma) (1 - \frac{2}{p})} \nonumber \\ & \times 2 ^{- \frac{2 + \gamma}{p + \epsilon_1} (d(\beta) - d(\eta)) } \left(\frac{|f|^{ p + \epsilon_1} (\widetilde{S_{\widetilde{\eta}}})}{A_\gamma( \widetilde{S_{\widetilde{\eta}}})}   \right) ^\frac{1}{p + \epsilon_1}.      \nonumber   \end{align}

\noindent Similarly, we have \begin{align}  \{B_\gamma  (|f^{-1} k_u ^{ 1- \frac{2}{q }}|^{q + \epsilon_2})   (u) \}^\frac{1}{q + \epsilon_2}  & =  \left(\int_\mathbb{D} |f^{-1} k_u ^{ 1- \frac{2}{q }}|^{q + \epsilon_2} |k_u|^2 \, dA_\gamma \right)^\frac{1}{q + \epsilon_2}  \nonumber \\ & \lesssim \sum_{o < \eta ' \leq  \beta}  2^{-d(\beta) \left(\frac{2 + \gamma}{2} \right)\left(1 - \frac{2}{q}\right)} 2^{ d(\eta ') (2 + \gamma) (1 - \frac{2}{q})}  \nonumber \\ & \times 2^{- \frac{2 + \gamma}{q + \epsilon_2} (d(\beta) - d(\eta'))} \left(\frac{|f|^{ -q - \epsilon_2} (\widetilde{S_{\widetilde{\eta '}}} )} {A_\gamma( \widetilde{S_{\widetilde{\eta '}}})}  \right) ^\frac{1}{q + \epsilon_2}.      \nonumber   \end{align} Combining these two inequalities gives us that \begin{align}     \{ B_\gamma  (|f k_u ^{ 1- \frac{2}{p }}|^{p + \epsilon_1}) & (u)  \}^\frac{1}{p + \epsilon_1}   \{B_\gamma (|f^{-1} k_u ^{ 1- \frac{2}{q }}|^{q + \epsilon_2})  (u) \}^\frac{1}{q + \epsilon_2}  \nonumber \\ &   \lesssim \sum_{o < \eta, \eta'  \leq \beta}   2^{ d(\eta) (2 + \gamma) \left(1 - \frac{2}{p}\right)} 2^{ d(\eta ' ) (2 + \gamma) \left(1 - \frac{2}{q}\right)}  2 ^{- \frac{2 + \gamma}{p + \epsilon_1} (d(\beta) - d(\eta)) } \nonumber \\ &   \times 2^{- \frac{2 + \gamma}{q + \epsilon_2} (d(\beta) - d(\eta '))} \left(\frac{|f|^{ p + \epsilon_1} (\widetilde{S_{\widetilde{\eta}}})}{A_\gamma( \widetilde{S_{\widetilde{\eta}}})}   \right) ^\frac{1}{p + \epsilon_1} \left(\frac{|f|^{ -q - \epsilon_2} (\widetilde{S_{\widetilde{\eta '}}} )} {A_\gamma( \widetilde{S_{\widetilde{\eta '}}})}  \right) ^\frac{1}{q + \epsilon_2} \nonumber  \end{align}

Now observe that if $\eta, \eta' \leq \beta$, then we either have that $ \eta \leq \eta'$ or $\eta' \leq \eta$.  Thus, without loss of generality, we need to bound the following quantity by a constant that is independent of $\beta \in \mathcal{D}$ :\begin{align} \sum_{o < \eta \leq \eta'  \leq \beta}   & 2^{ d(\eta) (2 + \gamma) \left(1 - \frac{2}{p}\right)} 2^{ d(\eta ' ) (2 + \gamma) \left(1 - \frac{2}{q}\right)}   2 ^{- \frac{2 + \gamma}{p + \epsilon_1} (d(\beta) - d(\eta)) } \nonumber \\  & \times 2^{- \frac{2 + \gamma}{q + \epsilon_2} (d(\beta) - d(\eta '))} \left(\frac{|f|^{ p + \epsilon_1} (\widetilde{S_{\widetilde{\eta}}})}{A_\gamma( \widetilde{S_{\widetilde{\eta}}})}   \right) ^\frac{1}{p + \epsilon_1} \left(\frac{|f|^{ -q - \epsilon_2} (\widetilde{S_{\widetilde{\eta '}}} )} {A_\gamma( \widetilde{S_{\widetilde{\eta '}}})}  \right) ^\frac{1}{q + \epsilon_2} \tag{5.12} \end{align} and we need to do the same when the above sum is taken over $\{ \eta, \eta' \in \mathcal{D}  : o < \eta' \leq \eta  \leq \beta \}$.

\medskip We first estimate $(5.12)$ for $\eta \leq \eta'  \leq \beta$.   Note that that \begin{align} \frac{1}{A_\gamma(\widetilde{S_{\widetilde{\eta'}}})}  \approx 2^{(d(\eta')  - d(\eta))(2 + \gamma)}  \frac{1}{A_\gamma(\widetilde{S_{\widetilde{\eta}}})}. \tag{5.13} \end{align}  Moreover, since the conclusion of Lemma $5.5$ holds when $\widetilde{S_{\widetilde{\eta}}}$ replaces $S_{\widetilde{\eta}}$ for any $\eta \in \mathcal{D}$, it is not difficult to check that the conclusion of Lemma $5.9$ holds when $\widetilde{S_{\widetilde{\eta}}}$ replaces $S_{\widetilde{\eta}}$.  Thus, since $\widetilde{S_{\widetilde{\eta'}}} \subseteq \widetilde{S_{\widetilde{\eta}}}$, we have that \begin{align} \int_{\widetilde{S_{\widetilde{\eta'}}}} | f| ^{-q - \epsilon_2} \, dA_\gamma \lesssim 2^{-  (d(\eta')  - d(\eta))(2 + \gamma)\left(\frac{\epsilon_2}{q + \epsilon_2}\right) }  \int_{\widetilde{S_{\widetilde{\eta}}}} | f| ^{-q - \epsilon_2} \, dA_\gamma  \tag{5.14} \end{align} for small enough $\epsilon_2$.  Also, an application of Lemma $5.5$ and Lemma $5.8$ (where again $\widetilde{S_{\widetilde{\eta}}}$ replaces $S_{\widetilde{\eta}}$) gives us that \begin{align} \left(\frac{|f|^{ p + \epsilon_1} (\widetilde{S_{\widetilde{\eta}}})}{A_\gamma( \widetilde{S_{\widetilde{\eta}}})}   \right) ^\frac{1}{p + \epsilon_1} \left(\frac{|f|^{ -q - \epsilon_2} (\widetilde{S_{\widetilde{\eta }}} )} {A_\gamma( \widetilde{S_{\widetilde{\eta }}})}  \right) ^\frac{1}{q + \epsilon_2} \leq C \tag{5.15} \end{align} where $C$ is independent of $\eta \in \mathcal{D}$.

\medskip
\noindent Plugging $(5.13), (5.14)$, and $(5.15)$ into $(5.12)$  gives us that \begin{align}  &  \sum_{o < \eta \leq \eta'    \leq \beta}    2^{ d(\eta) (2 + \gamma) \left(1 - \frac{2}{p}\right)}      2^{ d(\eta ' ) (2 + \gamma) \left(1 - \frac{2}{q}\right)}    2 ^{- \frac{2 + \gamma}{p + \epsilon_1} (d(\beta) - d(\eta)) } \nonumber \\    & \qquad \quad \times  2^{- \frac{2 + \gamma}{q + \epsilon_2} (d(\beta) - d(\eta '))} \left(\frac{|f|^{ p + \epsilon_1} (\widetilde{S_{\widetilde{\eta}}})}{A_\gamma( \widetilde{S_{\widetilde{\eta}}})}   \right) ^\frac{1}{p + \epsilon_1} \left(\frac{|f|^{ -q - \epsilon_2} (\widetilde{S_{\widetilde{\eta '}}} )} {A_\gamma( \widetilde{S_{\widetilde{\eta '}}})}  \right) ^\frac{1}{q + \epsilon_2}   \nonumber \\ & \lesssim \sum_{ \eta
 \leq \eta'  \leq \beta}    2^{ d(\eta) (2 + \gamma) \left(1 - \frac{2}{p}\right)} 2^{ d(\eta ' ) (2 + \gamma) \left(1 - \frac{2}{q}\right)}     2^{ \frac{2 + \gamma}{q + \epsilon_2} (d(\eta')  - d(\eta))} & \nonumber \\    &  \qquad \quad \times  2^{- \frac{\epsilon_2}{(q + \epsilon_2)^2} (d(\eta')  - d(\eta))(2 + \gamma) } 2 ^{- \frac{2 + \gamma}{p + \epsilon_1} (d(\beta) - d(\eta)) }   2^{- \frac{2 + \gamma}{q + \epsilon_2} (d(\beta) - d(\eta '))}  \nonumber  \\ & = \sum_{\eta'  \leq \beta}    2 ^{-  (d(\beta) - d(\eta')) (2 + \gamma) \left(\frac{1}{p + \epsilon_1} + \frac{1}{q + \epsilon_2} \right)}  \nonumber  \\ &   \qquad \quad \times  \sum_{\eta \leq \eta'}    2^{- (d(\eta')  - d(\eta))(2 + \gamma)\left(\frac{\epsilon_2}{(q + \epsilon_2)^2}    -  \frac{\epsilon_1}{p(p + \epsilon_1)} +             \frac{\epsilon_2}{q(q + \epsilon_2)}   \right)}  \tag{5.16} \end{align}

 \noindent
 Similarly, we have that \begin{align}   & \sum_{o < \eta' \leq \eta  \leq \beta} 2^{ d(\eta) (2 + \gamma) \left(1 - \frac{2}{p}\right)} 2^{ d(\eta ' ) (2 + \gamma) \left(1 - \frac{2}{q}\right)}   2 ^{- \frac{2 + \gamma}{p + \epsilon_1} (d(\beta) - d(\eta)) } \nonumber \\  & \qquad \quad  \times 2^{- \frac{2 + \gamma}{q + \epsilon_2} (d(\beta) - d(\eta '))} \left(\frac{|f|^{ p + \epsilon_1} (\widetilde{S_{\widetilde{\eta}}})}{A_\gamma( \widetilde{S_{\widetilde{\eta}}})}   \right) ^\frac{1}{p + \epsilon_1} \left(\frac{|f|^{ -q - \epsilon_2} (\widetilde{S_{\widetilde{\eta '}}} )} {A_\gamma( \widetilde{S_{\widetilde{\eta '}}})}  \right) ^\frac{1}{q + \epsilon_2} \nonumber \\ & \lesssim  \sum_{ \eta  \leq \beta}    2 ^{-  (d(\beta) - d(\eta)) (2 + \gamma) \left(\frac{1}{p + \epsilon_1} + \frac{1}{q + \epsilon_2} \right)} \nonumber  \\ & \qquad \quad  \times \sum_{\eta' \leq \eta}    2^{- (d(\eta)  - d(\eta'))(2 + \gamma)\left(\frac{\epsilon_1}{(p + \epsilon_1)^2}     - \frac{\epsilon_2}{q(q + \epsilon_2)}   +             \frac{\epsilon_1}{p(p + \epsilon_1)}         \right)} \tag{5.17} \end{align}

\noindent
Clearly the sums $(5.16)$ and $(5.17)$ converge to a sum that has an upper bound independent of $\beta \in \mathcal{D}$ if we simultaneously have \begin{displaymath} \left\{\begin{array}{ll} \frac{\epsilon_2}{(q + \epsilon_2)^2}   +  \frac{\epsilon_2}{q(q + \epsilon_2)}  >     \frac{\epsilon_1}{p(p + \epsilon_1)}        \\   \frac{\epsilon_1}{(p + \epsilon_1)^2}        +             \frac{\epsilon_1}{p(p + \epsilon_1)} > \frac{\epsilon_2}{q(q + \epsilon_2)}
\end{array} \right. .\end{displaymath}
\noindent Moreover, both of these are trivially satisfied if $\frac{\epsilon_2}{q(q + \epsilon_2)}  =     \frac{\epsilon_1}{p(p + \epsilon_1)}$ or $\epsilon_1 = \frac{\epsilon_2 p^2 }{ q^2 + \epsilon_2 q - \epsilon_2 p}$ and so the proof is complete so long as $\epsilon_2 > 0$ is set small enough.

\bigskip
\bigskip
Finally in this paper, we will prove Propositions $4.1$ and $4.4$, starting with Proposition $4.4$.  The proof is similar to the proof of Theorem $3.2.2$ in \cite{H}, though we include it since some of the details are different.  Let $d(u, v)$ denote the non-isotropic metric on $\partial \mathbb{B}_n$ given by $d(u, v) = |1 - u \cdot v|^\frac{1}{2}$ and let $B = B(u, r)$ denote a ball in this metric. It is well known (see \cite{Z2}) that $B(u, r) = \partial \mathbb{B}_n $ when $r \geq \sqrt{2}$ and that there exists $C > 0$ independent of $r$ and $u $ such that \begin{align} \frac{1}{C} r^{2n} \leq \sigma(B(u, r)) \leq C r^{2n} \tag{5.18} \end{align} where  $\sigma$ is the canonical surface measure on $\partial \mathbb{B}_n$.

Fix some large $M > 0$ such that $C^2 M^{-n} \leq \frac{1}{2}$ where $C$ is the constant in $(5.18)$.  Without loss of generality fix some $z \in \mathbb{B}_n$ where $1/M < 1 -  |z| < 1$ and pick $J \in \mathbb{N}$ such that $M^{-J - 1} \leq 1 - |z| <  M^{-J },$ and let $B_k = B(z/|z|, M^{\frac{(k-J)}{2} })$ for $k \in \{0, 1, \ldots, J + 1  \}$.   Now, for any $0 \leq t \leq 1, \ 0 \leq a \leq 1$, and $ \theta  \in \mathbb{R}$,
we have \begin{align} |1 - tae^{i\theta}|^2 = t |1 - ae^{i\theta}|^2 + (1 - t)(1 - ta^2) \geq t |1 - ae^{i\theta}|^2.  \nonumber \end{align}  Thus, if $\zeta \in \partial \mathbb{B}_n \backslash B_k$, then writing $\zeta \cdot z = tae^{i\theta}$ where $t = |z|$ and $a e^{i \theta} =   \zeta \cdot (z/|z|)$ gives us that \begin{align} |k_z (\zeta)|  = \frac{(1 - |z|^2)^{n/2}} {|1 - \zeta \cdot z |^n} \lesssim M^{-\frac{nJ}{2}} M^{-n (k - J)}  \lesssim \frac{M^{-\frac{nk}{2}}} {(\sigma(B_{k + 1}))^\frac{1}{2}} \nonumber \end{align}  Also, if $\zeta \in B_0$, then we have that $|k_z (\zeta)|  \approx (\sigma(B_0))^{-\frac{1}{2}}$

\medskip

Thus, if we define $B_{-1} = \emptyset$ then we have that \begin{align}  \left(\int_{\partial \mathbb{B}_n}  \ w |k_z|^p \, d\sigma \right)^\frac1p &  \left(\int_{\partial \mathbb{B}_n} \ w^{- \frac{1}{p - 1}} |k_z|^q \, d\sigma \right)^{\frac1q}  \nonumber \\ & \leq \sum_{k, k' = -1}^{J } \left(\int_{B_{k + 1} \backslash B_k} \ w |k_z|^p \, d\sigma \right) ^\frac1p \left(\int_{B_{k' + 1} \backslash B_{k'} } \ w^{- \frac{1}{p - 1}} |k_z|^q \, d\sigma \right)^{\frac1q} \nonumber  \\ & \lesssim  \sum_{k, k' = -1}^{J } \frac{M^{-\frac{nk
}{2}}}  {(\sigma(B_{k + 1}))^{\frac{1}{2}}} \frac{M^{-\frac{nk'}{2}}}  {(\sigma(B_{k' + 1}))^{\frac{1}{2}}}   \left(\int_{B_{k + 1} }  w \, d\sigma \right)^\frac{1}{p} \left(\int_{B_{k' + 1}  }  w^{- \frac{1}{p - 1}} \, d\sigma \right)^{\frac1q}   \tag{5.19}   \end{align}
\bigskip

Now break the sum in $(5.19)$ into two sums, the first of which is taken over $k \leq k'$ and the second over $k' < k$.
In the first case, we have that \begin{align}  \frac{M^{-\frac{nk}{2}}}  {(\sigma(B_{k + 1} ) )^{\frac{1}{2}}}  \lesssim \frac{M^{-\frac{nk}{2}} M^{  \frac{ n (k' - k)}{2} } } {(\sigma(B_{k' + 1}))^{\frac{1}{2}}} = \frac{M^{{-nk}} M^{  \frac{ nk' }{2} } } {(\sigma(B_{k' + 1}))^{\frac{1}{2}} }. \nonumber \end{align}  Moreover, similar to Lemma $5.6$, we have that $\frac{w(B_{k+ 1}) }{w(B_{k' + 1})} \leq \delta_1 ^{k' - k} \nonumber $ where $\delta_1 = 1 - (2^p \|w\|_{\text{A}_p}  )^{-1} $.  Thus, we have that  \begin{align} \sum_{k \leq k'}^{J } & \frac{M^{-\frac{nk
}{2}}}    {(\sigma(B_{k + 1}))^{\frac{1}{2}}}  \frac{M^{-\frac{nk'}{2}}}  {(\sigma(B_{k' + 1}))^{\frac{1}{2}}}   \left(\int_{B_{k + 1} }  w \, d\sigma \right)^\frac1p  \left(\int_{B_{k' + 1}  }  w^{- \frac{1}{p - 1}} \, d\sigma \right)^{\frac1q}
\nonumber \\ & \lesssim  \sum_{k \leq k'}^{J } \frac{M^{-nk} \delta_1 ^{k' - k} }   {(\sigma(B_{k' + 1}))}    \left(\int_{B_{k' + 1} }  w \, d\sigma \right)^\frac1p  \left(\int_{B_{k' + 1}  }  w^{- \frac{1}{p - 1}} \, d\sigma \right)^\frac1q \nonumber \\ & = \sum_{k = -1}^{J } M^{-nk} \sum_{k' = k}^{J } \delta_1 ^\frac{k' - k}{p} \left(\frac{1}{(\sigma(B_{k' + 1}))}\int_{B_{k' + 1} }  w \, d\sigma \right)^\frac1p  \left(\frac{1}{(\sigma(B_{k' + 1}))} \int_{B_{k' + 1}  }  w^{- \frac{1}{p - 1}} \, d\sigma \right)^\frac1q \nonumber \\ & \lesssim   \| w\|_{\text{A}_p} ^{1 + \frac1p} \nonumber\end{align}

\noindent Similarly, for $k ' < k$ we have that $\frac{w^{- \
\frac{1}{p-1}} (B_{k' + 1}) }{w^{- \frac{1}{p-1}} (B_{k+ 1})} \leq \delta_2 ^{k - k'} $ where $\delta_2 = 1 - (2^q \|w\|_{\text{A}_p} ^\frac{q}{p} )^{-1}$, so that \begin{equation*} \sum_{k' < k}^{J }  \frac{M^{-\frac{nk
}{2}}}    {(\sigma(B_{k + 1}))^{\frac{1}{2}}}  \frac{M^{-\frac{nk'}{2}}}  {(\sigma(B_{k' + 1}))^{\frac{1}{2}}}    \left(\int_{B_{k + 1} }  w \, d\sigma \right)^\frac{1}{p}  \left(\int_{B_{k' + 1}  }  w^{- \frac{1}{p - 1}} \, d\sigma \right)^\frac1q
  \lesssim  \|w\|_{\text{A}_p}  ^{\frac{1 + q}{p}} \nonumber \end{equation*} which proves Proposition $4.4$.

\bigskip

Now to prove proposition $4.1$, let $d$ be the pseudo-metric $d(z, u) = ||z| - |u|| + |1 - \frac{z}{|z|} \cdot \frac{u}{|u|}|$ on $\mathbb{B}_n$.
According to Lemma $2$ in \cite{Be}, there exists $C > 0$ such that \begin{align} \frac{1}{C} r^{n + 1 + \gamma} \leq v_\gamma(B(u, r)) \leq C r^{n + 1 + \gamma} \nonumber \end{align} whenever $r \geq 1 - |u|$ (and $u \in \mathbb{B}_n$.)   As before, pick some large $M > 0$ where $C^2 M^{-(n + 1 + \gamma)} \leq \frac{1}{2} $ and for some fixed $\frac{1}{M} < 1 - |z| < 1$, pick $J$ where $M^{-J - 1} \leq 1 - |z| <  M^{-J }$ and let $B_k = B(z,  M^{k-J })$ for $k \in \{0, 1, \ldots, J + 1 \}$.  Note that we clearly have $z/|z| \in B_k$ for each $k$ and note that $M^{k - J } \geq 1 - |z|$.
  Furthermore, if we can show that \begin{equation} \sup_{u \in \mathbb{B}_n \backslash B_k} |k_z(u)| \lesssim \frac{M^{- \frac{k}{2} (n + 1 + \gamma)}}{(v_\gamma(B_{k + 1}))^\frac12} \end{equation} then the proof of Proposition $4.1$ will be almost identical to the proof of Proposition $4.4$ above.

  To that end, set $t = |z||u|$ and $ae^{i \theta} = \frac{z}{|z|} \cdot \frac{u}{|u|}$ so $0 \leq a, t < 1$.  Then as before we can write \begin{align} |1 - z \cdot u|^2 & =  t|1 - ae^{i\theta}|  + (1 - t)(1 - ta^2) \nonumber \\ & \geq |z||u| \left|1 - \frac{z}{|z|} \cdot \frac{u}{|u|}\right|^2 + (1 - |z||u|)^\frac12. \end{align} First note that we can obviously assume that $|u| \geq \frac12$ since otherwise (5.1) is obviously true.  Now if $u \not \in B_k$ then either $||z| - |u|| \geq \frac12 M^{k - J}$ or $\left|1 - \frac{z}{|z|} \cdot \frac{u}{|u|}\right| \geq \frac12 M^{k - J}$.  In the latter case we clearly have $|1 - z \cdot u| \gtrsim M^{k - J}$ and the former case we have \begin{equation*} 1 - |u| \geq 1 - |z| + \frac12 M^{k - J} \geq M^{- J - 1} + \frac12 M^{k - J}  \gtrsim M ^{k - J} \end{equation*} so again $|1 - z \cdot u| \gtrsim M^{k - J}$.  Thus, \begin{equation*}  \sup_{u \in \mathbb{B}_n \backslash B_k} |k_z(u)| \lesssim \frac{M^{- \frac{J}{2} (n + 1 + \gamma)}}{M^{(k - J) (n + 1 + \gamma)}} \approx \frac{M^{- \frac{k}{2} (n + 1 + \gamma)}}{(v_\gamma(B_{k + 1}))^\frac12}. \end{equation*}

\end{document}